\numberwithin{equation}{section}
\newtheorem{thm}{Theorem}[section]
\newtheorem{lma}[thm]{Lemma}
\newtheorem{cor}[thm]{Corollary}
\newtheorem{defn}[thm]{Definition}
\newtheorem{prop}[thm]{Proposition}
\newtheorem{ques}[thm]{Question}
\newtheorem{exam}[thm]{Example}
\renewcommand{\geq}{\geqslant}
\renewcommand{\leq}{\leqslant}
\renewcommand{\H}{\text{H}}
\title{Assouad type dimensions and homogeneity of fractals}
\author{Jonathan M. Fraser\\ \\
\emph{Mathematical Institute, University of St Andrews, North Haugh,}\\ \emph{St Andrews, Fife, KY16 9SS, Scotland}\\ \emph{e-mail: jmf32@st-andrews.ac.uk}}
\begin{document}
\maketitle

\begin{abstract}
We investigate several aspects of the Assouad dimension and the lower dimension, which together form a natural `dimension pair'.  In particular, we compute these dimensions for certain classes of self-affine sets and quasi-self-similar sets and study their relationships with other notions of dimension, like the Hausdorff dimension for example. We also investigate some basic properties of these dimensions including their behaviour regarding unions and products and their set theoretic complexity.\\

\emph{Mathematics Subject Classification} 2010:  primary: 28A80; secondary: 28A78, 28A20, 28C15.\\

\emph{Key words and phrases}: Assouad dimension, lower dimension, self-affine carpet, Ahlfors regular, measurability, Baire Hierarchy.
\end{abstract}

\section{Introduction} \label{intro}

In this paper we conduct a detailed study of the Assouad dimension and the lower dimension (sometimes referred to as the \emph{minimal dimensional number}, \emph{lower Assouad dimension} or \emph{uniformity dimension}).  In particular, we investigate to what extent these dimensions are useful tools for studying the homogeneity of fractal sets.  Roughly speaking, the Assouad dimension depends only on the most complex part of the set and the lower dimension depends only on the least complex part of the set.  As such they give coarse, easily interpreted, geometric information about the extreme behaviour of the local geometry of the set and their relationship with each other and the Hausdorff, box and packing dimensions, helps paint a complete picture of the local scaling laws.  We begin with a thorough investigation of the basic properties of the Assouad and lower dimensions, for example, how they behave under products and the set theoretic complexity of the Assouad and lower dimensions as maps on spaces of compact sets.  We then compute the Assouad and lower dimensions for a wide variety of sets, including the quasi-self-similar sets of Falconer and McLaughlin \cite{implicit, mclaughlin} and
the self-affine carpets of Bara\'nski \cite{baranski} and Lalley-Gatzouras \cite{lalley-gatz}.  We also provide an example of a self-similar set with overlaps which has distinct upper box dimension and Assouad dimension, thus answering a question posed by Olsen \cite[Question 1.3]{olsenassouad}. In Section \ref{questions} we discuss our results and pose several open questions.  All of our proofs are given in Sections \ref{proofs1}--\ref{proofs3}.
\\ \\
We use various different techniques to compute the Assouad and lower dimensions.  In particular, to calculate the dimensions of self-affine carpets we use a combination of delicate covering arguments and the construction of appropriate `tangents'.  We use the notion of \emph{weak tangents} used by Mackay and Tyson \cite{mackay, mackaytyson} and \emph{very weak tangents}, a concept we introduce here specifically designed to estimate the lower dimension.

\subsection{Assouad dimension and lower dimension} \label{assouadintro}

The Assouad dimension was introduced by Assouad in the 1970s \cite{assouadphd, assouad}, see also \cite{larman}.  Let $(X,d)$ be a metric space and for any non-empty subset $F \subseteq X$ and $r>0$, let $N_r (F)$ be the smallest number of open sets with diameter less than or equal to $r$ required to cover $F$.  The \emph{Assouad dimension} of a non-empty subset $F$ of $X$, $\dim_\text{A} F$, is defined by
\begin{eqnarray*}
\dim_\text{A} F \ = \  \inf \Bigg\{ &\alpha& : \text{     there exists constants $C, \, \rho>0$ such that,} \\
&\,& \text{ for all $0<r<R\leq \rho$, we have $\ \sup_{x \in F} \, N_r\big( B(x,R) \cap F \big) \ \leq \ C \bigg(\frac{R}{r}\bigg)^\alpha$ } \Bigg\}.
\end{eqnarray*}
Although interesting in its own right, the importance of the Assouad dimension thus far has been its relationship with quasi-conformal mappings and embeddability problems, rather than as a tool in the dimension theory of fractals, see \cite{hein, luk, mackaytyson, robinson}.  However, this seems to be changing, with several recent papers appearing which study Assouad dimension and its relationship with the other well-studied notions of dimension: Hausdorff, packing and box dimension; see, for example, \cite{kaenmakiassouad, mackay, olsenassouad, SSass}.  We will denote the Hausdorff, packing and lower and upper box dimensions by $\dim_\text{H}$, $\dim_\text{P}$, $\underline{\dim}_\text{B}$ and $\overline{\dim}_\text{B}$, respectively, and if the upper and lower box dimensions are equal then we refer to the common value as the box dimension and denote it by $\dim_\text{B}$.  We will also write $\mathcal{H}^s$ for the $s$-dimensional Hausdorff measure for $s \geq 0$.  For a review of these other notions of dimension and measure, see \cite{falconer}.  We will also be concerned with the natural dual to Assouad dimension, which we call the \emph{lower dimension}.  The lower dimension of $X$, $\dim_\text{L} X$, is defined by
\begin{eqnarray*}
\dim_\text{L} X \ = \  \sup \Bigg\{ &\alpha& : \text{     there exists constants $C, \, \rho>0$ such that,} \\
&\,& \text{ for all $0<r<R\leq \rho$, we have $\ \inf_{x \in X} \, N_r\big( B(x,R)\big) \ \geq \ C \bigg(\frac{R}{r}\bigg)^\alpha$ } \Bigg\}.
\end{eqnarray*}

This quantity was introduced by Larman \cite{larman}, where it was called the \emph{minimal dimensional number}, but it has been referred to by other names, for example: the \emph{lower Assouad dimension} by K\"aenm\"aki, Lehrb\"ack and Vuorinen \cite{kaenmakiassouad} and the \emph{uniformity dimension} (Tuomas Sahlsten, personal communication). We decided on \emph{lower dimension} to be consistent with the terminology used by Bylund and Gudayol in \cite{bylund}, but we wish to emphasise the relationship with the well-studied and popular Assouad dimension.  Indeed, the Assouad dimension and the lower dimension often behave as a pair, with many of their properties being intertwined.  The lower dimension has received little attention in the literature on fractals, however, we believe it is a very natural definition and should have a place in the study of dimension theory and fractal geometry.  We summarise the key reasons for this below:
\begin{itemize}
\item The lower dimension is a natural dual to the well-studied Assouad dimension and dimensions often come in pairs.  For example, the rich and complex interplay between Hausdorff dimension and packing dimension has become one of the key concepts in dimension theory.  Also, the popular upper and lower box dimensions are a natural `dimension pair'.  Dimension pairs are important in several areas of geometric measure theory, for example, the dimension theory of product spaces, see Theorem \ref{lowerassproduct} and the discussion preceding it.

\item The lower dimension gives some important and easily interpreted information about the fine structure of the set.  In particular, it identifies the parts of the set which are easiest to cover and gives a rigorous gauge on how efficiently the set can be covered in these areas.

\item One might argue that the lower dimension is not a sensible tool for studying sets which are highly inhomogeneous in the sense of having some exceptional points around which the set is distributed very sparsely compared to the rest of the set.  For example, sets with isolated points have lower dimension equal to zero.  However, it is perfect for studying attractors of iterated function systems (IFSs) as the IFS construction forces the set to have a certain degree of homogeneity.  In fact the difference between the Assouad dimension and the lower dimension can give insight into the amount of homogeneity present.  For example, for self-similar sets satisfying the open set condition, the two quantities are equal indicating that the set is as homogeneous as possible.  However, in this paper we will demonstrate that for more complicated \emph{self-affine sets} and \emph{self-similar sets with overlaps}, the quantities can be, and often are, different.
\end{itemize}

For a totally bounded subset $F$ of a metric space, we have
\[
\dim_\text{L} F  \ \leq \  \underline{\dim}_\text{B} F \ \leq \  \overline{\dim}_\text{B} F \ \leq \  \dim_\text{A} F.
\]
The lower dimension is in general not comparable to the Hausdorff dimension or packing dimension.  However, if $F$ is compact, then
\[
\dim_\text{L} F  \ \leq \ \dim_\text{H} F \ \leq \ \dim_\text{P} F.
\]
This was proved by Larman \cite{larman, larman2}.   In particular, this means that the lower dimension provides a practical way of estimating the Hausdorff dimension of compact sets from below, which is often a difficult problem.  The Assouad dimension and lower dimensions are much more sensitive to the local structure of the set around particular points, whereas the other dimensions give more global information.  The Assouad dimension will be `large' relative to the other dimensions if there are points around which the set is `abnormally difficult' to cover and the lower dimension will be `small' relative to the other dimensions if there are points around which the set is `abnormally easy' to cover.  This phenomena is best illustrated by an example.  Let $X=\{1/n : n \in \mathbb{N}\} \cup \{0\}$.  Then
\[
\dim_\text{L} X \ = \ 0,
\]
\[
 \underline{\dim}_\text{B} X \ =\  \overline{\dim}_\text{B} X \ = \ 1/2
\]
and
\[
 \dim_\text{A} X = 1.
\]
The lower dimension is zero due to the influence of the isolated points in $X$.  Indeed the set is locally very easy to cover around isolated points and it follows that if a set, $X$, has any isolated points, then $\dim_\text{L} X =  0$.  This could be viewed as an undesirable property for a `dimension' to have because it causes it to be non-monotone and means that it can increase under Lipschitz mappings.  We are not worried by this, however, as the geometric interpretation is clear and useful.  For some basic properties of the Assouad dimension, the reader is referred to the appendix of \cite{luk} and for some more discussion of the basic properties of the Assouad and lower dimension, see Section \ref{app} of this paper.
\\ \\
It is sometimes useful to note that we can replace $N_r$ in the definition of the Assouad and lower dimensions with any of the standard covering or packing functions, see \cite[Section 3.1]{falconer}.  For example, if $F$ is a subset of Euclidean space, then $N_r(F)$ could denote the number of squares in an $r$-mesh orientated at the origin which intersect $F$ or the maximum number of sets in an $r$-packing of $F$.  We also obtain equivalent definitions if the ball $B(x,R)$ is taken to be open or closed, although we usually think of it as being closed.

\subsection{Self-affine carpets} \label{affineintro}

Since Bedford-McMullen carpets were introduced in the mid 80s \cite{bedford, mcmullen}, there has been an enormous interest in investigating these sets as well as several generalisations.  Particular attention has been paid to computing the dimensions of an array of different classes of self-affine carpets, see \cite{baranski, bedford, fengaffine, me_box, lalley-gatz, mackay, mcmullen}.  One reason that these classes are of interest is that they provide examples of self-affine sets with distinct Hausdorff and packing dimensions.
\\ \\
Recently, Mackay \cite{mackay} computed the Assouad dimension for the Lalley-Gatzouras class, see \cite{lalley-gatz}, which contains the Bedford-McMullen class.  In this paper we will compute the Assouad dimension and lower dimension for the Bara\'nski class, see \cite{baranski}, which also contains the Bedford-McMullen class, and we will complement Mackay's result by computing the lower dimension for the Lalley-Gatzouras class.  We will now briefly recall the definitions.
\\ \\
Both classes consist of self-affine attractors of finite contractive iterated function systems acting on the unit square $[0,1]^2$.  Recall that an iterated function system (IFS) is a finite collection of contracting self-maps on a metric space and the attractor of such an IFS, $\{S_1, \dots, S_N\}$, is the unique non-empty compact set, $F$, satisfying
\[
F = \bigcup_{i=1}^{N} S_i(F).
\]
An important class of IFSs is when the mappings are translate linear and act on Euclidean space.  In such cirmumstances, the attractor is called a self-affine set.  These sets have attracted a substantial amount of attention in the literature over the past 30 years and are generally considered to be significantly more difficult to deal with than self-similar sets, where the mappings are assumed to be similarities.
\\ \\
\textbf{Lalley-Gatzouras and extended Lalley-Gatzouras carpets:}
Take the unit square and divide it up into columns via a finite positive number of vertical lines.  Now divide each column up independently by slicing horizontally.  Finally select a subset of the small rectangles, with the only restriction being that the length of the base must be strictly greater than the height,  and for each chosen subrectangle include a map in the IFS which maps the unit square onto the rectangle via an orientation preserving linear contraction and a translation.  The Hausdorff and box-counting dimensions of the attractors of such systems were computed in \cite{lalley-gatz} and the Assouad dimension was computed in \cite{mackay}.  If we relax the requirement that `the length of the base must be strictly greater than the height' to `the length of the base must be greater than or equal the height', then we obtain a slightly more general class, which we will refer to as the \emph{extended Lalley-Gatzouras class}. 

\begin{figure}[H]
	\centering
	\includegraphics[width=110mm]{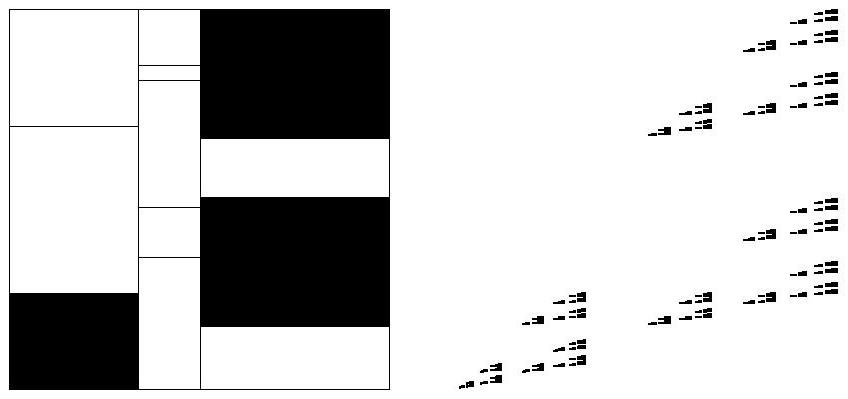}
\caption{The defining pattern for an IFS in the Lalley-Gatzouras class (left) and the corresponding attractor (right).}
\end{figure}

\textbf{Bara\'nski carpets:}
Again take the unit square, but this time divide it up into a collection of subrectangles by slicing horizontally and vertically a finite number of times (at least once in each direction).  Now take a subset of the subrectangles formed and form an IFS as above.  The key difference between the Bara\'nski and Lalley-Gatzouras classes is that in the Bara\'nski class the largest contraction need not be in the vertical direction.  This makes the Bara\'nski class significantly more difficult to deal with.  The Hausdorff and box-counting dimensions of the attractors of such systems were computed in \cite{baranski}.

\begin{figure}[H]
	\centering
	\includegraphics[width=110mm]{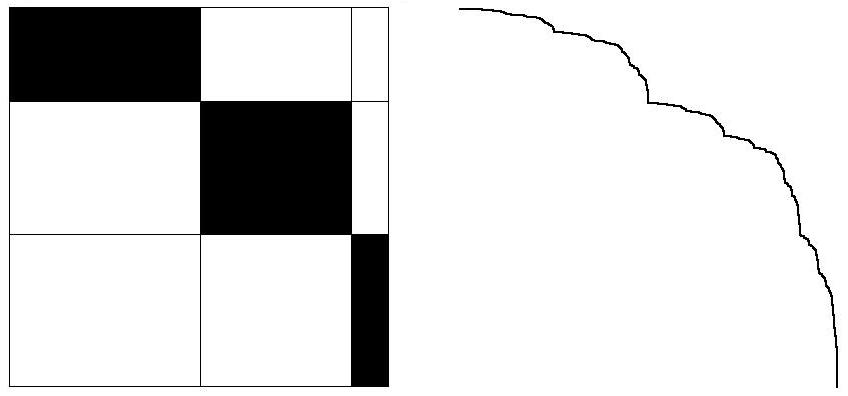}
\caption{The defining pattern for an IFS in the Bara\'nski class (left) and the corresponding attractor (right).}
\end{figure}

Note that neither class is more general than the other.  More general classes, containing both the Lalley-Gatzouras and Bara\'nski classes have been introduced and studied by Feng and Wang \cite{fengaffine} and the author \cite{me_box}.

\section{Results} \label{results}

We split this section into three parts, where we study: basic properties; quasi-self-similar sets; and self-affine sets, respectively.

\subsection{Basic properties of the Assouad and lower dimensions} \label{app}

In this section we collect together some basic results concerning the Assouad and lower dimensions.  We will be interested in how they behave under some standard set operations: unions, products and closures.  The behaviour of the classical dimensions under these operations has been long known, see \cite[Chapters 3--4]{falconer}.  We also give a simple example which demonstrates that the lower dimension of an open set in $\mathbb{R}^n$ need not be $n$.  This is in stark contrast to the rest of the dimensions.  Finally, we will investigate the measurability of the Assouad and lower dimensions.  We will frequently refer to the already known basic properties of the Assouad dimension which were due to Assouad \cite{assouadphd, assouad} and discussed in \cite{luk}.  Throughout this section $X$ and $Y$ will be metric spaces.
\\ \\
The first standard geometric construction we will consider is taking the product of two metric spaces, $(X,d_X)$ and $(Y,d_Y)$.  There are many natural `product metrics' to impose on the product space $X \times Y$, but any reasonable choice is bi-Lipschitz equivalent to the metric $d_{X \times Y}$ on $X \times Y$  defined by
\[
d_{X \times Y}\big((x_1,y_1), (x_2,y_2)\big) = \max \{d_X(x_1,x_2), \,  d_Y(y_1,y_2)\}
\]
which we will use from now on.  A classical result due to Howroyd \cite{products} is that
\[
\dim_\text{H} X \, + \, \dim_\text{H} Y \ \leq \ \dim_\text{H} (X \times Y)  \  \leq \  \dim_\text{H} X \, + \, \dim_\text{P} Y \ \leq \  \dim_\text{P} (X \times Y)  \  \leq \  \dim_\text{P} X \, + \, \dim_\text{P} Y
\]
and, indeed, it is easy to see that
\[
\underline{\dim}_\text{B} X \, + \, \underline{\dim}_\text{B} Y \  \leq \  \underline{\dim}_\text{B} (X \times Y)  \ \leq \  \underline{\dim}_\text{B} X  \, +  \, \overline{\dim}_\text{B} Y  \ \leq \  \overline{\dim}_\text{B} (X \times Y)  \  \leq  \  \overline{\dim}_\text{B} X \, + \, \overline{\dim}_\text{B} Y.
\]
In particular, `dimension pairs' are intimately related to the dimension theory of products.  Here we show that an analogous phenomenon holds for the Assouad and lower dimensions.

\begin{thm}[products] \label{lowerassproduct}
We have
\[
\dim_\text{\emph{L}} X \, + \, \dim_\text{\emph{L}} Y \  \leq \  \dim_\text{\emph{L}} (X \times Y)   \ \leq \  \dim_\text{\emph{L}} X  \, +  \, \dim_\text{\emph{A}} Y  \ \leq \  \dim_\text{\emph{A}} (X \times Y)  \  \leq  \  \dim_\text{\emph{A}} X \, + \, \dim_\text{\emph{A}} Y
\]
and
\[
\dim_\text{\emph{L}} (X^n) \ = \ n\, \dim_\text{\emph{L}} X.
\]
\end{thm}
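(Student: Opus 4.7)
The plan is to work throughout with the max metric on $X\times Y$, under which a ball $B((x,y),R)$ factors as $B(x,R)\times B(y,R)$, so the interaction of covers and packings with the product structure becomes essentially trivial. The four inequalities split into two types: upper bounds on the dimensions of $X\times Y$ will follow from the observation that a product of covers is a cover of the product, while lower bounds will follow from the dual observation that a product of $r$-packings is an $r$-packing of the product (in the max metric). For the latter I would appeal to the remark in the introduction allowing $N_r$ to be replaced by any standard packing function $P_r$.

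For all three upper bounds the key estimate is
\[
N_r(B((x,y),R)\cap(X\times Y)) \leq N_r(B(x,R)\cap X)\cdot N_r(B(y,R)\cap Y).
\]
The outer inequality $\dim_{\text{A}}(X\times Y)\leq \dim_{\text{A}}X+\dim_{\text{A}}Y$ is immediate, since the uniform Assouad estimates on both factors hold at every scale and every base point. For the middle upper bound $\dim_{\text{L}}(X\times Y)\leq \dim_{\text{L}}X+\dim_{\text{A}}Y$, fix $\alpha>\dim_{\text{L}}X$ and $\beta>\dim_{\text{A}}Y$; because the uniform Assouad bound on $Y$ is available at every scale, one is free to use the ``bad'' scales $r<R$ and base point $x\in X$ witnessing the failure of the $\dim_{\text{L}}X$ condition for $\alpha$, and combine with an arbitrary $y\in Y$.

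For the two lower bounds I would use the symmetric estimate
\[
P_r(B((x,y),R)\cap(X\times Y)) \geq P_r(B(x,R)\cap X)\cdot P_r(B(y,R)\cap Y).
\]
Then $\dim_{\text{L}}X+\dim_{\text{L}}Y\leq \dim_{\text{L}}(X\times Y)$ is the mirror of the outer upper bound, with uniform lower bounds on both factors giving a uniform lower bound on the product; and $\dim_{\text{L}}X+\dim_{\text{A}}Y\leq \dim_{\text{A}}(X\times Y)$ is the mirror of the middle upper bound, with the failure of the Assouad bound on $Y$ supplying bad scales at which the uniform $\dim_{\text{L}}X$ lower bound is available.

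The equation $\dim_{\text{L}}(X^n)=n\dim_{\text{L}}X$ is where the main obstacle lies. The direction $\geq$ follows by iterating the first displayed inequality. Iterating the middle upper bound, however, would only yield $\dim_{\text{L}}(X^n)\leq \dim_{\text{L}}X+(n-1)\dim_{\text{A}}X$, which is in general too weak. The fix is to exploit that all $n$ factors coincide: given $\alpha>\dim_{\text{L}}X$, for any $C,\rho>0$ there exist $r<R\leq\rho$ and a single point $x\in X$ with $N_r(B(x,R)\cap X)<C(R/r)^{\alpha}$; taking the diagonal point $(x,\dots,x)\in X^n$ and the $n$-fold product of that single cover yields
\[
N_r\bigl(B((x,\dots,x),R)\cap X^n\bigr) < C^n(R/r)^{n\alpha},
\]
forcing $\dim_{\text{L}}(X^n)\leq n\alpha$. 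It is this diagonal witness, rather than any iteration of the abstract product inequality, that delivers the sharp bound.
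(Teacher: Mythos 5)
Your proposal takes essentially the same route as the paper: the max metric, the two product inequalities $N_r(B((x,y),R)\cap(X\times Y))\leq N_r(B(x,R)\cap X)\,N_r(B(y,R)\cap Y)$ and its packing dual, and the diagonal witness $(x,\dots,x)$ with the $n$th-root-of-$C$ trick to get the sharp bound $\dim_\text{L}(X^n)\leq n\dim_\text{L}X$. The only cosmetic difference is that the paper phrases the dual inequality in terms of $M_r$, the maximal cardinality of an $r$-separated set, where you invoke a generic packing function $P_r$; this is the same device.
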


We will prove Theorem \ref{lowerassproduct} in Section \ref{basicproductproof}.  We note that this generalises a result of Assouad, see Luukkainen \cite[Theorem A.5 (4)]{luk} and Robinson \cite[Lemma 9.7]{robinson}, which gave the following bounds for the Assoaud dimension of a product:
\[
\max\{\dim_\text{A} X, \, \dim_\text{A} Y\}  \ \leq \  \dim_\text{A} (X \times Y)  \  \leq  \  \dim_\text{A} X \, + \, \dim_\text{A} Y.
\]
The result in \cite{luk} was stated for a finite product $X_1 \times \cdots \times X_n$, but here we only give the formula for the product of two sets and note that this can easily be used to estimate the dimensions of any finite product. Also, note that the precise formula given here for the product of a metric space with itself also holds for Assouad dimension, see \cite[Theorem A.5 (4)]{luk}.  Another standard property of many dimensions is stability under taking unions (finite or countable).  Indeed, for Hausdorff, packing, upper box dimension or Assouad dimension, one has that the dimension of the union of two sets is the \emph{maximum} of the individual dimensions, with the situation for lower box dimension being more complicated.  The following proposition concerns the stability of the lower dimension.

\begin{thm}[unions]  \label{lowerassunion}
For all $E, F \subseteq X$, we have
\[
\min \{ \dim_\text{\emph{L}} E,  \ \dim_\text{\emph{L}}  F\} \ \leq \ \dim_\text{\emph{L}} ( E \cup F) \ \leq \ \max \{ \dim_\text{\emph{L}} E,  \ \dim_\text{\emph{A}}  F\}.
\]
However, if $E$ and $F$ are such that $\inf_{x \in E, \, y \in F} d(x,y)>0$, then
\[
\dim_\text{\emph{L}} ( E \cup F) = \min \{ \dim_\text{\emph{L}} E, \ \dim_\text{\emph{L}}  F\}.
\]
\end{thm}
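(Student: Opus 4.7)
The plan is to prove the three claims in turn: the left inequality, the right inequality, and the equality in the separated case. The left inequality $\min\{\dim_\text{L} E, \dim_\text{L} F\} \leq \dim_\text{L}(E \cup F)$ is immediate from monotonicity. Fixing $\alpha$ below $\min\{\dim_\text{L} E, \dim_\text{L} F\}$ with witness constants extracted from each of the two lower dimension bounds, any $z \in E \cup F$ belongs to at least one of $E, F$, and the corresponding witness applied to $B(z,R)$ yields the desired lower bound on $N_r(B(z,R) \cap (E \cup F))$, since enlarging the ambient set can only increase $N_r$.

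The right inequality is the substantive step, and I would argue by contradiction. Write $\beta = \max\{\dim_\text{L} E, \dim_\text{A} F\}$ and suppose $\dim_\text{L}(E \cup F) > \beta$; pick $\alpha \in (\beta, \dim_\text{L}(E \cup F))$ together with an auxiliary exponent $\alpha' \in (\dim_\text{A} F, \alpha)$. The goal is to combine the hypothetical inf-bound for $E \cup F$ at exponent $\alpha$ with the Assouad upper bound for $F$ at the smaller exponent $\alpha'$ to force an inf-bound for $E$ at exponent $\alpha$, contradicting $\alpha > \dim_\text{L} E$. For $x \in E$, subadditivity of the covering function gives
\[
N_r(B(x,R) \cap E) \;\geq\; N_r(B(x,R) \cap (E \cup F)) - N_r(B(x,R) \cap F),
\]
and if $B(x,R) \cap F$ is non-empty, enlarging to a ball centered at a point of $B(x,R) \cap F$ controls the second term by a constant times $(R/r)^{\alpha'}$. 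Because $\alpha' < \alpha$, this contribution is dominated by the main $(R/r)^\alpha$ term whenever $R/r$ exceeds some threshold $\lambda$, leaving a lower bound of the form $(C/2)(R/r)^\alpha$; for $R/r \leq \lambda$ the trivial estimate $N_r(B(x,R) \cap E) \geq 1 \geq \lambda^{-\alpha}(R/r)^\alpha$ takes over. Combining the two regimes produces the contradiction.

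The separated case rests on a very simple geometric observation. With $\delta = \inf_{x \in E,\, y \in F} d(x,y) > 0$, for any $z \in E$ and $R < \delta$ the ball $B(z,R)$ avoids $F$ entirely, so $B(z,R) \cap (E \cup F) = B(z,R) \cap E$, and symmetrically for $z \in F$. Restricting any lower-dimension witness for $E \cup F$ to scales below $\min\{\rho,\delta/2\}$ therefore restricts to witnesses for both $\dim_\text{L} E$ and $\dim_\text{L} F$, yielding $\dim_\text{L}(E \cup F) \leq \min\{\dim_\text{L} E, \dim_\text{L} F\}$. Combined with the already-established left inequality this gives equality.

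The principal obstacle is the upper inequality in the first assertion: without any separation hypothesis, one must decouple the contributions of $E$ and $F$ to $N_r(B(x,R) \cap (E \cup F))$ when the sets may intersect or be arbitrarily close. The resolution is to introduce the intermediate exponent $\alpha'$, exploiting the strict inequality $\alpha > \dim_\text{A} F$ to make the $F$-contribution asymptotically subdominant, and then to treat the bounded-ratio regime separately using the trivial covering bound $N_r \geq 1$.
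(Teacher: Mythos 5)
Your proof is correct, and it reaches the same place as the paper's but by the contrapositive route.  For the upper inequality the paper argues directly: it fixes arbitrary $C,\rho>0$ and $t>s>\max\{\dim_\text{L} E,\dim_\text{A} F\}$, invokes the negation of the lower-dimension bound for $E$ with a deliberately tiny constant $\min\{C/2,(2C_F/C)^{t/(s-t)}\}$, and observes that the trivial estimate $N_r\geq 1$ then forces $(R/r)^{s-t}\leq C/(2C_F)$, so the $F$-contribution from subadditivity is automatically controlled by $(C/2)(R/r)^t$ at the chosen scale.  You instead assume $\dim_\text{L}(E\cup F)>\beta$, rearrange subadditivity to $N_r(B(x,R)\cap E)\geq N_r(B(x,R)\cap(E\cup F))-N_r(B(x,R)\cap F)$, and split explicitly into two regimes according to whether $R/r$ exceeds a threshold $\lambda$, with the trivial bound $N_r\geq 1\geq\lambda^{-\alpha}(R/r)^\alpha$ handling the bounded-ratio case.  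The two proofs encode the same insight --- that the $F$-contribution is asymptotically subdominant because $\dim_\text{A} F$ is strictly below the target exponent, and that when $R/r$ is bounded the desired estimate is trivial --- but you make the regime split explicit where the paper buries it in the choice of constant.  One small point worth keeping in mind: both arguments need the Assouad upper bound for $F$ to hold for balls centered at arbitrary $x\in E\cup F$, not just at points of $F$; you handle this correctly by enlarging to a ball of radius $2R$ centered at a point of $B(x,R)\cap F$, exactly as the paper does.  Your treatments of the lower inequality and the separated case coincide with the paper's.
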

We will prove Theorem \ref{lowerassunion} in Section \ref{basicunionproof}.  Assouad proved that the Assouad dimension is stable under taking closures, see \cite[Theorem A.5 (2)]{luk}.  Here we prove that this is also true for the lower dimension.  This does not hold for Hausdorff and packing dimension but does hold for the box dimensions.

\begin{thm}[closures]  \label{closurestable2}
For all $F \subseteq X$ we have
\[
\dim_\text{\emph{L}} F = \dim_\text{\emph{L}} \overline{F}.
\]
\end{thm}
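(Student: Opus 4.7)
The plan is to prove both directions $\dim_\text{L} F \le \dim_\text{L} \overline{F}$ and $\dim_\text{L} \overline{F} \le \dim_\text{L} F$ separately and directly from the definition, since the lower dimension is \emph{not} monotone under set inclusion (the paper emphasises this via the isolated point example) so neither inequality can be deduced formally from $F \subseteq \overline{F}$.

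For $\dim_\text{L} F \le \dim_\text{L} \overline{F}$, I would use a short approximation argument. Assume $\alpha < \dim_\text{L} F$ with witnesses $C, \rho > 0$. Given $y \in \overline{F}$ and $0 < r < R \le \rho$ with $R \ge 2r$, pick $x \in F$ with $d(x, y) < r$ and observe the containment $B(x, R - r) \cap F \subseteq B(y, R) \cap \overline{F}$. Monotonicity of $N_r$ and the hypothesis at $x$ at scales $(r, R-r)$ then give
\[
N_r\big(B(y, R) \cap \overline{F}\big) \;\ge\; N_r\big(B(x, R-r) \cap F\big) \;\ge\; C\left(\frac{R-r}{r}\right)^\alpha \;\ge\; C \, 2^{-\alpha}\left(\frac{R}{r}\right)^\alpha,
\]
and the narrow range $R < 2r$ is handled trivially because $N_r \ge 1$ while $(R/r)^\alpha \le 2^\alpha$.

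The reverse inequality is subtler, because for $x \in F$ the hypothesis $N_r(B(x, R) \cap \overline{F}) \ge C(R/r)^\alpha$ bounds the covering number of a \emph{larger} set than the one we want, so monotonicity of $N_r$ points the wrong way. The observation I would exploit is that for the open ball $B(x, R)$ one has $B(x, R) \cap \overline{F} \subseteq \overline{B(x, R) \cap F}$: any $z$ in the left-hand intersection is the limit of a sequence in $F$ whose tail lies in the open set $B(x, R)$. Switching to closed covers of diameter at most $r$---an equivalent formulation by the remark at the end of Section \ref{assouadintro}---the union of any such cover of $B(x, R) \cap F$ is itself closed, hence contains $\overline{B(x, R) \cap F}$ and in particular contains $B(x, R) \cap \overline{F}$. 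This yields the equality $N_r(B(x, R) \cap F) = N_r(B(x, R) \cap \overline{F})$ for every $x \in F$, and the desired inequality follows immediately.

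The main obstacle is the open-versus-closed cover bookkeeping in the second direction. Once one commits to closed covers (absorbing only harmless multiplicative constants, via the paper's invariance remark), the topological inclusion $B(x, R) \cap \overline{F} \subseteq \overline{B(x, R) \cap F}$ closes the argument cleanly without any further scale-shift gymnastics.
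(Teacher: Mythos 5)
Your proof is correct, and the first direction is essentially identical to the paper's: both approximate $y \in \overline{F}$ by a nearby $x \in F$, use the containment $B(x,R') \cap F \subseteq B(y,R) \cap \overline{F}$ for a slightly shrunken radius $R'$, and absorb the scale shift into the constant. For the second direction the paper uses the same underlying topological observation as you do (a closed cover of $B(x,R) \cap F$ is automatically a cover of its closure, hence of any ball-intersection with $\overline{F}$ sitting inside that closure), but it mirrors the first direction's packaging: it picks a second centre $\overline{x}$ within $\varepsilon$ of $x$ and a shrunken radius $R-\varepsilon$ to get $B(\overline{x}, R-\varepsilon) \cap \overline{F} \subseteq \overline{B(x,R) \cap F}$. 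You avoid this entirely by noting that, with the ball taken \emph{open}, one has $B(x,R) \cap \overline{F} \subseteq \overline{B(x,R) \cap F}$ at the same centre and scale, so closed covers give the exact identity $N_r\big(B(x,R) \cap F\big) = N_r\big(B(x,R) \cap \overline{F}\big)$ for every $x \in F$, and $\dim_\text{L}\overline{F} \le \dim_\text{L} F$ follows with no further scale-shifting. This is a genuine, if modest, simplification of the paper's sketch; both rely only on the already-stated flexibility in the definition to use closed covers and open balls. One small caution: the identity really does require the ball to be open, since for a closed ball a boundary point of $B(x,R)$ lying in $\overline{F}$ need not be approximable from within $B(x,R) \cap F$ — you flag this implicitly, and it is precisely the issue the paper's $\varepsilon$-shrinkage is circumventing.
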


We will prove Theorem \ref{closurestable2} in Section \ref{basicclosureproof}.  It is well known that Hausdorff, packing and box dimension cannot increase under a Lipschitz mapping.  Despite this \emph{not} being the case for either Assouad dimension or lower dimension, (see \cite[Example A.6.2]{luk} or Section \ref{selfsimexample} of this paper for the Assouad dimension case; the lower dimension case is trivial) these dimensions are still \emph{bi-Lipschitz invariant}.  For the Assouad dimension case see \cite[Theorem A.5.1]{luk} and we prove the lower dimension case here.

\begin{thm}[bi-Lipschitz invariance]  \label{bi-Lipschitz}
If $X$ and $Y$ are bi-Lipschitz equivalent, i.e., there exists a bi-Lipschitz bijection between $X$ and $Y$, then
\[
\dim_\text{\emph{L}} X = \dim_\text{\emph{L}} Y.
\]
\end{thm}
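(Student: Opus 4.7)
The plan is to mimic the standard argument that bi-Lipschitz maps preserve the Assouad dimension and transfer it to the lower dimension, using the fact that bi-Lipschitz maps distort covering numbers of balls by only multiplicative factors in the radii. By symmetry (replacing $f$ with $f^{-1}$) it suffices to prove the one-sided inequality $\dim_{\text{L}} Y \ge \dim_{\text{L}} X$.

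First I fix a bi-Lipschitz bijection $f : X \to Y$ with constants $0 < a \le b$, so that
\[
a\, d_X(x_1,x_2) \ \le \ d_Y(f(x_1),f(x_2)) \ \le \ b\, d_X(x_1,x_2)
\]
for all $x_1, x_2 \in X$. From this I read off the two key inclusions $f^{-1}(B_Y(f(x),S)) \supseteq B_X(x, S/b)$ and the fact that $f^{-1}$ is Lipschitz with constant $1/a$, so pulling back any cover by sets of diameter at most $s$ yields a cover by sets of diameter at most $s/a$. This gives the fundamental covering comparison
\[
N^X_{s/a}\bigl(B_X(x, S/b)\bigr) \ \le \ N^Y_s\bigl(B_Y(f(x), S)\bigr)
\]
valid for all $x \in X$ and all $0 < s \le S$.

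Next I take any $\alpha < \dim_{\text{L}} X$ together with the corresponding constants $C, \rho > 0$ from the definition of lower dimension. Given $y = f(x) \in Y$ and scales $0 < s < S \le b\rho$, the comparison above combined with the lower bound on $N^X_{s/a}(B_X(x,S/b))$ yields, in the range $s < (a/b)S$,
\[
N^Y_s\bigl(B_Y(y,S)\bigr) \ \ge \ C \left(\frac{S/b}{s/a}\right)^{\!\alpha} \ = \ C \left(\frac{a}{b}\right)^{\!\alpha} \left(\frac{S}{s}\right)^{\!\alpha}.
\]
In the remaining range $(a/b)S \le s < S$ the ratio $S/s$ is bounded by $b/a$, and the trivial bound $N^Y_s(B_Y(y,S)) \ge 1$ gives the same inequality with a different constant. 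Combining the two cases shows $\dim_{\text{L}} Y \ge \alpha$, and letting $\alpha \nearrow \dim_{\text{L}} X$ finishes one direction.

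This argument is essentially bookkeeping, and I do not expect any serious obstacle: the only mildly delicate point is dealing with the regime where $s$ is comparable to $S$, where the pulled-back scale $s/a$ could exceed $S/b$ so that the lower-dimension hypothesis on $X$ does not directly apply. The case split above handles this cleanly, after which symmetry (applied to $f^{-1}$, whose bi-Lipschitz constants are $1/b$ and $1/a$) gives the reverse inequality and hence equality.
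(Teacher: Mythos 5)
Your proof is correct and follows essentially the same approach as the paper: transfer covers of balls through the bi-Lipschitz map (the paper pushes covers from $X$ forward via $\phi$, you pull covers of $Y$ back via $f^{-1}$, but these are mirror images of the same argument), then apply the definition of lower dimension to obtain the scaling bound. If anything you are slightly more careful than the paper, which writes the resulting estimate without explicitly flagging the constraint $br<aR$ needed to apply the lower-dimension hypothesis; your case split on whether $s<(a/b)S$ handles that regime cleanly.
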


We will prove Theorem \ref{bi-Lipschitz} in Section \ref{basicbi-Lipschitzproof}.  One final standard property of the classical dimensions is that if $V \subseteq \mathbb{R}^n$ is open (or indeed has non-empty interior), then the dimension of $V$ is $n$.  This holds true for Hausdorff, box, packing and Assouad dimension, see \cite[Theorem A.5 (6)]{luk} for the Assouad dimension case.  Here we give a simple example which shows that this does \emph{not} hold for lower dimension.

\begin{exam}[open sets]
\emph{Let
\[
V \ = \  \bigcup_{n=1}^\infty \  \big(1/n-2^{-n}, \ 1/n+2^{-n}\big) \ \subseteq \  \mathbb{R}.
\]
It is clear that $V$ is open and we will now argue that $\dim_\text{L} V = 0$.  Let $s, C,\rho>0$ and observe that if we choose
\[
R(n) = \frac{1}{n(n+1)} - \frac{2}{2^{n-1}} \qquad \text{and} \qquad r(n) = \frac{2}{2^n}
\]
then for all $n \geq 9$ we have $0<r(n) < R(n)$ and
\[
N_{r(n)} \Big( B\big(1/n, R(n)\big) \cap V  \Big) = 1
\]
since $B\big(1/n, R(n) \big) \cap V = \big(1/n-2^{-n}, \ 1/n+2^{-n}\big)$.  We may hence choose $n$ large enough to ensure that $0<r(n) < R(n)<\rho$ and
\[
N_{r(n)} \Big( B\big(1/n, R(n)  \big) \cap V \Big) \ = \  1 \  <  \ C \,   \bigg(\frac{R(n)}{r(n)} \bigg)^s
\]
which gives that $\dim_\text{L} V \leq s$ and letting $s \searrow 0$ proves the result.}
\end{exam}

Finally, we will examine the measurabilty properties of the Assouad dimension and lower dimensions as functions from the compact subsets of a given compact metric space into $\mathbb{R}$.  This question has been examined thoroughly in the past for other definitions of dimension.  For example, in \cite{mattilamauldin} it was shown that Hausdorff dimension and upper and lower box dimensions are Borel measurable and, moreover, are of Baire class 2, but packing dimension is not Borel measurable.  Measurability properties of several multifractal dimension functions were also considered in \cite{multiborel}.
\\ \\
We will now briefly recall the \emph{Baire hierarchy} which is used to classify functions by their `level of discontinuity'.  Let $(A, d_A)$ and $(B, d_B)$ be metric spaces.  A function $f: A \to B$ is of Baire class 0 if it is continuous.  The later classes are defined inductively by saying that a function $f:A \to B$ is of Baire class $n+1$ if it is in the pointwise closure of the Baire class $n$ functions.  One should think of functions lying in higher Baire classes (and not in lower ones) as being `further away' from being continuous and as being more set theoretically complicated.  Also, if a function belongs to any Baire class, then it is Borel measurable.  For more details on the Baire hierarchy, see \cite{kechris}.
\\ \\
For the rest of this section let $(X,d)$ be a compact metric space and let $\mathcal{K}(X)$ denote the set of all non-empty compact subsets of $X$.  We are interested in maps from $\mathcal{K}(X)$ into $\mathbb{R}$, so we will now metricise $\mathcal{K}(X)$ in the usual way.  Define the Hausdorff metric, $d_\mathcal{H}$, by
\[
d_\mathcal{H}(E,F) = \inf \{ \varepsilon>0: E \subseteq F_\varepsilon \text{ and } F \subseteq E_\varepsilon\}
\]
for $E,F \in \mathcal{K}(X)$ and where $E_\varepsilon$ denotes the $\varepsilon$-neighbourhood of $E$.  It is sometimes convenient to extend $d_\mathcal{H}$ to a metric $d_\mathcal{H}'$ on the space $\mathcal{K}_0(X) = \mathcal{K}(X) \cup \{ \emptyset \}$, by letting $d'_\mathcal{H}(E, \emptyset) = \text{diam}(X)$ for all $E \in \mathcal{K}(X)$.  The spaces $(\mathcal{K}(X),d_\mathcal{H})$ and $(\mathcal{K}_0(X),d'_\mathcal{H})$ are both complete.  We can now state our final results of this section.  
\begin{thm} \label{meas1}
The function $\Delta_\text{\emph{A}}: \mathcal{K}(X) \to \mathbb{R}$ defined by
\[
\Delta_\text{\emph{A}}(F) = \dim_\text{\emph{A}} F
\]
is of Baire class 2 and, in particular, Borel measurable.
\end{thm}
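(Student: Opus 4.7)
The plan is to show that for every $\alpha \in \mathbb{R}$ the sublevel set $\{F \in \mathcal{K}(X) : \dim_\text{A} F \leq \alpha\}$ lies in the class $F_{\sigma\delta}$ (equivalently $\Pi^0_3$) in the Hausdorff metric. By the standard characterisation of the Baire hierarchy (see \cite{kechris}) and closure of $\Sigma^0_3$ under finite intersections and countable unions, this suffices to put $\Delta_\text{A}$ in Baire class $2$. Using the paper's observation that $N_r$ may be replaced by any equivalent covering function without changing the Assouad dimension, I would work with $N_r(E)$ defined as the minimum number of closed balls of radius $r$ (centred anywhere in $X$) needed to cover $E$. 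Writing $\phi_{r, R}(F) := \sup_{x \in F} N_r(B(x, R) \cap F)$ and using its monotonicity (decreasing in $r$, increasing in $R$), one rewrites
\[
\{F : \dim_\text{A} F \leq \alpha\} \ = \  \bigcap_{\substack{\beta \in \mathbb{Q} \\ \beta > \alpha}} \ \bigcup_{\substack{C \in \mathbb{N} \\ \rho \in \mathbb{Q}^+}} \ \bigcap_{\substack{r, R \in \mathbb{Q}^+ \\ 0 < r < R \leq \rho}} D(r, R, C, \beta),
\]
where $D(r, R, C, \beta) := \{F : \phi_{r,R}(F) \leq C(R/r)^\beta\}$. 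The forward inclusion is immediate; for the reverse, any real $(r_0, R_0)$ with $R_0 < \rho$ is approximated from outside by rationals $r' < r_0$ and $R' > R_0$, giving $\phi_{r_0, R_0}(F) \leq \phi_{r', R'}(F) \leq C(R'/r')^\beta$, and letting $R'/r' \to R_0/r_0$ yields $\phi_{r_0, R_0}(F) \leq C(R_0/r_0)^\beta$, hence $\dim_\text{A} F \leq \beta$ and, by intersection over rational $\beta \searrow \alpha$, also $\dim_\text{A} F \leq \alpha$.

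The main step is to show that each $D(r, R, C, \beta)$ is closed in $(\mathcal{K}(X), d_\mathcal{H})$. Let $F_n \to F$ with $F_n \in D$ and set $M := \lfloor C (R/r)^\beta \rfloor$. For any $x \in F$, pick $x_n \in F_n$ with $x_n \to x$ and a cover $B(x_n, R) \cap F_n \subseteq \bigcup_{i=1}^{M} \overline{B}(z^{(n)}_i, r)$, padding with repeats if fewer balls suffice. Compactness of $X$ and a diagonal argument yield a subsequence along which $z^{(n)}_i \to z_i$ for each $i \in \{1, \ldots, M\}$. For any $y \in B(x, R) \cap F$, choose $y_n \in F_n$ with $y_n \to y$; since $B(x, R)$ is \emph{open} and $d(y_n, x_n) \to d(y, x) < R$, one has $y_n \in B(x_n, R) \cap F_n$ for all large $n$, so $y_n \in \overline{B}(z^{(n)}_{i_n}, r)$ for some $i_n \in \{1, \ldots, M\}$. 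Pigeonholing on this finite index set and passing to the limit gives $y \in \overline{B}(z_i, r)$ for some $i$, whence $B(x, R) \cap F$ is covered by $M$ closed balls of radius $r$. Taking the supremum over $x \in F$ shows $F \in D$.

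The displayed formula then exhibits $\{F : \dim_\text{A} F \leq \alpha\}$ as a countable intersection of countable unions of countable intersections of closed sets, i.e.\ as an $F_{\sigma\delta}$, and the claim follows. The main obstacle is the closedness step: the subtle interplay between the moving big-ball centre $x_n \to x$ and the Hausdorff perturbation $F_n \to F$ forces one to pair \emph{open} large balls $B(x_n, R)$ (so that any $y \in B(x, R)$ enters $B(x_n, R)$ eventually) with \emph{closed} covering balls of fixed radius $r$ (so that the limit of the centres $z^{(n)}_i \to z_i$ produces a genuine radius-$r$ cover rather than a radius-$(r+\varepsilon)$ cover), and to pad the cover to a uniform size $M$ so that pigeonhole can be applied.
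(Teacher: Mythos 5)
Your key lemma --- that each $D(r,R,C,\beta)$ is closed, via the diagonal-and-pigeonhole argument --- is correct, and it amounts to showing that the functional $\phi_{r,R}(F)=\sup_{x\in F}N_r\big(B(x,R)\cap F\big)$ is lower semicontinuous on $(\mathcal{K}(X),d_\mathcal{H})$. The $F_{\sigma\delta}$ decomposition of $\{F:\dim_\text{A}F\leq\alpha\}$ is also correct. However, the step where you conclude Baire class $2$ contains a genuine logical error. Placing every $\{\Delta_\text{A}\leq\alpha\}$ in $\Pi^0_3$ only controls the strict superlevel sets, $\{\Delta_\text{A}>\alpha\}\in\Sigma^0_3$; it does \emph{not} put $\{\Delta_\text{A}<\alpha\}$ in $\Sigma^0_3$. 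From what you have shown one only gets $\{\Delta_\text{A}<\alpha\}=\bigcup_n\{\Delta_\text{A}\leq\alpha-1/n\}\in\Sigma^0_4$, and since the Kechris characterisation you cite requires $\Delta_\text{A}^{-1}\big((a,b)\big)=\{\Delta_\text{A}>a\}\cap\{\Delta_\text{A}<b\}$ to lie in $\Sigma^0_3$, your argument as written delivers only Baire class $3$. (The indicator of a set in $\Pi^0_3\setminus\Sigma^0_3$ has all its sublevel sets $\{f\leq\alpha\}$ in $\Pi^0_3$ yet is not Baire class $2$.) This is exactly the obstruction that in the paper limits the \emph{lower} dimension to Baire class $3$, where only one of the two required halves is available.

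Fortunately the missing half comes free from the lemma you already proved. Observe that
\[
\{F : \dim_\text{A} F < \alpha\} \ = \ \bigcup_{\substack{\gamma \in \mathbb{Q} \\ \gamma < \alpha}} \ \bigcup_{\substack{C \in \mathbb{N} \\ \rho \in \mathbb{Q}^+}} \ \bigcap_{\substack{r, R \in \mathbb{Q}^+ \\ 0 < r < R \leq \rho}} D(r, R, C, \gamma),
\]
which, each $D$ being closed, is an $F_\sigma$ set, hence lies in $\Sigma^0_2\subseteq\Sigma^0_3$; intersecting with $\{\Delta_\text{A}>a\}\in\Sigma^0_3$ then gives $\Delta_\text{A}^{-1}\big((a,b)\big)\in\Sigma^0_3$ and the theorem. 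With this repair your route is genuinely different from the paper's: the paper proves semicontinuity of the covering and packing counts with a \emph{fixed} base point (Lemmas \ref{uppersemicontinuity2}--\ref{lowersemicontinuity}) and then relies on Lemma \ref{equivdef}, which recasts the Assouad supremum over a fixed countable dense set of base points, whereas you treat the $\sup_{x\in F}$ functional directly and need neither ingredient. Your version even yields the slightly sharper fact that $\{\Delta_\text{A}<\alpha\}$ is already $F_\sigma$.
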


\begin{thm} \label{meas2}
The function $\Delta_\text{\emph{L}}: \mathcal{K}(X) \to \mathbb{R}$ defined by
\[
\Delta_\text{\emph{L}}(F) = \dim_\text{\emph{L}} F
\]
is of Baire class 3 and, in particular, Borel measurable.
\end{thm}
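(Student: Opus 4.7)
I would follow the same scheme as the proof of Theorem~\ref{meas1}, tracking the single extra Baire level that arises from the $\inf_{x \in F}$ in the definition of $\dim_{\text{L}}$ (compared with the $\sup_{x \in F}$ in the Assouad definition). Working only with rational parameters (justified because $N_r$ takes integer values and the defining condition is monotone in $C$, $\rho$, $r$ and $R$), one can write
\[
\{F\in\mathcal{K}(X): \Delta_{\text{L}}(F) > \alpha\} \;=\; \bigcup_{\substack{\alpha',C,\rho\in\mathbb{Q}^+\\ \alpha' > \alpha}}\; \bigcap_{\substack{r,R\in\mathbb{Q}^+\\ 0 < r < R \le \rho}} E\bigl(r, R,\, \lceil C(R/r)^{\alpha'}\rceil\bigr),
\]
where $E(r, R, n) := \{F\in\mathcal{K}(X) : \forall\, x\in F,\; N_r(B(x, R)\cap F) \ge n\}$. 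The proof then reduces to showing each $E(r, R, n)$ is of Borel class $\Pi^0_3 = F_{\sigma\delta}$: a countable intersection of $\Pi^0_3$ sets is $\Pi^0_3$ and a countable union of them is $\Sigma^0_4 = F_{\sigma\delta\sigma}$, which is exactly the Borel class corresponding to Baire class $3$.

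The key lemma is the $F_{\sigma\delta}$ claim, and two facts drive it. First, $F\mapsto N_r(F)$ is upper semi-continuous on $\mathcal{K}(X)$: any cover of a compact set by finitely many open sets of diameter $\le r$ also covers a Hausdorff-neighbourhood of it, so $\{G\in\mathcal{K}(X): N_r(G)\ge n\}$ is closed. Second, the universal quantifier $\forall x\in F$ is made countable via separability: fixing a countable dense subset $\{x_m\}\subseteq X$ and using continuity of $F\mapsto d(x_m,F)$, I would establish
\[
E(r,R,n) \;=\; \bigcap_{k\ge 1}\; \bigcup_{m\ge 1}\; A(r,R,n;k,m),
\]
where $A(r,R,n;k,m)$ is the closed set of $F\in\mathcal{K}(X)$ satisfying the alternative that either $d(x_m,F) > 1/k$, or $N_{r + 1/k}\bigl(\bar B(x_m, R+1/k)\cap F\bigr)\ge n$. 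The mild $1/k$-inflations of the ball radii absorb the boundary effects arising from the fact that $B(x, R)\cap F$ does not commute with Hausdorff limits in $F$; combined with upper semi-continuity of $N_r$, this makes $A(r,R,n;k,m)$ closed.

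The main obstacle is the verification of this decomposition. The forward inclusion uses compactness of $F$ to choose a net point $x_m$ within $1/k$ of any $x\in F$ witnessing the lower bound $N_r(B(x,R)\cap F)\ge n$. The reverse inclusion uses upper semi-continuity of $N_r$ to pass the lower bound through the limit $k\to\infty$: if along a subsequence one has $x_{m_k}\to x\in F$ and the $1/k$-perturbed bounds hold, then $N_r(B(x,R)\cap F)\ge n$ for the limiting $x$. In contrast, the Assouad case of Theorem~\ref{meas1} needs only one layer of approximation (placing the analogous basic set at $G_\delta = \Pi^0_2$) because the $\sup_{x\in F}$ is detected without requiring an outer union over the countable net $\{x_m\}$; that extra inner union is exactly what raises the Baire class by one.
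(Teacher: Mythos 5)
Your high-level plan---express $\{F : \Delta_L(F) > \alpha\}$ as $\bigcup\bigcap E(r,R,n)$ and bound the Borel class of $E(r,R,n)$---differs from the route the paper takes, and there is a genuine gap in the key lemma. The claimed identity $E(r,R,n) = \bigcap_{k} \bigcup_{m} A(r,R,n;k,m)$ is false, and the inner union over $m$ is the culprit. To replace the uncountable ``$\forall x\in F$'' by a condition on the countable net $\{x_m\}$ you need a universal quantifier over $m$ (``for every $m$ with $x_m$ near $F$, \ldots''), hence $\bigcap_m$, not $\bigcup_m$. With $\bigcup_m$ as written the right-hand side is far too large: whenever $F$ is not $1/k$-dense in $X$ there is some $x_m$ with $d(x_m,F)>1/k$, so the first disjunct of $A(k,m)$ holds trivially and $F\in\bigcup_m A(k,m)$ with no constraint on covering numbers; thus $\bigcap_k\bigcup_m A(k,m)$ contains every $F$ that fails to be dense in $X$. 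Your sketch of the reverse inclusion presupposes that the witnesses $x_{m_k}$ converge to a point of $F$, but nothing in the existential structure controls which $m_k$ is chosen. (A smaller issue: $A(k,m)$ is a union of the \emph{open} set $\{F:d(x_m,F)>1/k\}$ with a closed set, so it is not closed as claimed; one needs $d(x_m,F)\ge 1/k$.)

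The paper's proof avoids the dense-net decomposition entirely. As the authors observe right after Lemma~\ref{equivdef}, the Assouad-case trick of replacing $\sup_{x\in F}$ by $\sup_{x\in X_0}$ has no analogue for the infimum, so they instead prove (Lemma~\ref{uppersemicontinuity3}) that the functional $\Phi_{r,R}(F)=\inf_{x\in F}N_r(\overline{B}(x,R)\cap F)$ is upper semicontinuous on $(\mathcal{K}(X),d_\mathcal{H})$, which makes each $\Phi_{r,R}^{-1}((-\infty,s))$ \emph{open}. Lemma~\ref{borelset2} then writes $\{F:\dim_{\text{L}} F<t\}$ as a $\mathcal{G}_{\delta\sigma}$ set and the theorem follows by taking complements. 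This semicontinuity of the infimum functional is precisely the ingredient your proposal is missing. It is also worth noting that if you repair your decomposition to the correct form $E(r,R,n) = \bigcap_m\bigcap_k A'(m,k)$ with $A'$ closed, the set $E(r,R,n)$ comes out \emph{closed}---which is just a restatement of the upper semicontinuity of $\Phi_{r,R}$---so the ``extra Baire level'' you attribute to the inner union does not in fact originate there.
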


We will prove Theorems \ref{meas1} and \ref{meas2} in Section \ref{basicmeasureproof1}.  It is straightforward to see that neither the Assouad dimension nor the lower dimension are Baire 1 as for Baire 1 functions the points of continuity form a dense $\mathcal{G}_\delta$ set, see \cite[Theorem 24.14]{kechris}, and it is evident that the Assouad and lower dimensions are discontinuous everywhere.  Hence, the Assouad dimension is `precisely' Baire 2, but we have been unable to determine the `precise' Baire class of the lower dimension.

\subsection{Dimension results for quasi-self-similar sets}

In this section we will examine sets with high degrees of homogeneity.  We will be particularly interested in conditions which guarantee the equality of certain dimensions.  Throughout this section $(X,d)$ will be a compact metric space.  Recall that $(X,d)$ is called \emph{Ahlfors regular} if $\dim_\H X<\infty$ and there exists a constant $\lambda>0$ such that, writing $\mathcal{H}^{\dim_\text{\H} X}$ to denote the Hausdorff measure in the critical dimension,
\[
\tfrac{1}{\lambda}\,  r^{\dim_\text{\H} X} \ \leq \  \mathcal{H}^{\dim_\text{\H} X} \big( B(x,r)\big) \  \leq \ \lambda \, r^{\dim_\text{\H} X} 
\]
for all $x \in X$ and all $0<r< \text{diam}(X)$, see \cite[Chapter 8]{hein}.  A metric space is called \emph{locally Ahlfors regular} if the above estimates on the measure of balls holds for sufficiently small $r>0$.  It is easy to see that a compact locally Ahlfors regular space is Ahlfors regular.   In a certain sense Ahlfors regular spaces are the most homogeneous spaces.  This is reflected in the following proposition.

\begin{prop}
If $(X,d)$ is Ahlfors regular, then
\[
\dim_\text{\emph{L}} X  \ = \ \dim_\text{\emph{A}} X.
\]
\end{prop}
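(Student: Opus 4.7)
The plan is to show that both $\dim_{\text{L}} X$ and $\dim_{\text{A}} X$ equal the common value $s := \dim_{\text{H}} X$, by exploiting the Ahlfors upper and lower regularity estimates on $\mathcal{H}^s$ separately. Since we always have $\dim_{\text{L}} X \leq \dim_{\text{A}} X$, it suffices to prove that $\dim_{\text{A}} X \leq s$ and $\dim_{\text{L}} X \geq s$.

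For the upper bound on the Assouad dimension, I would fix $x \in X$ and $0 < r < R < \text{diam}(X)$, and take a maximal $r$-separated subset of $B(x,R) \cap X$, which yields an $r$-packing of size $N$ whose doubled balls of radius $r$ still cover $B(x,R) \cap X$ (so $N_{2r}(B(x,R) \cap X) \leq N$). The disjoint balls $B(x_i, r/2)$ centered in $X$ all lie inside $B(x, 2R)$, and by Ahlfors regularity
\[
N \cdot \tfrac{1}{\lambda} (r/2)^s \ \leq \ \sum_{i=1}^N \mathcal{H}^s(B(x_i, r/2)) \ \leq \ \mathcal{H}^s(B(x,2R)) \ \leq \ \lambda (2R)^s,
\]
so $N \leq \lambda^2 \cdot 4^s (R/r)^s$. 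Replacing $2r$ by $r$ absorbs a harmless constant, and the Assouad definition (with any of the equivalent covering functions, as noted in the paper) gives $\dim_{\text{A}} X \leq s$.

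For the lower bound on the lower dimension, I would instead use the opposite direction of the Ahlfors inequality together with the fact that any cover of $B(x,R) \cap X$ by at most $N$ sets of diameter $\leq r$ extends to a cover by $N$ closed balls of radius $r$, each of $\mathcal{H}^s$-measure at most $\lambda r^s$. Combining with the lower bound $\mathcal{H}^s(B(x,R)) \geq \lambda^{-1} R^s$ yields
\[
N_r(B(x,R)) \ \geq \ \frac{\lambda^{-1} R^s}{\lambda r^s} \ = \ \lambda^{-2} \bigl(R/r\bigr)^s,
\]
uniformly in $x$, valid for all $0 < r < R < \text{diam}(X)$. By definition of the lower dimension this forces $\dim_{\text{L}} X \geq s$, completing the chain $s \leq \dim_{\text{L}} X \leq \dim_{\text{A}} X \leq s$.

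There is no serious obstacle here: the argument is a routine volume-counting / packing exercise, and the only minor care required is to ensure the enlarged ball $B(x,2R)$ stays within the range where the Ahlfors estimates apply, which is immediate since we can absorb any constant factor into $\lambda$ (or simply restrict to $R < \tfrac{1}{2}\text{diam}(X)$ without loss of generality, since the Assouad/lower dimensions are insensitive to the choice of the threshold $\rho$). The proof goes through verbatim for the locally Ahlfors regular case.
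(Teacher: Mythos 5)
Your proof is correct. The paper does not give its own argument for this proposition but instead defers to Bylund and Gudayol, so there is nothing in the text to compare against; your approach is the standard one. The volume-counting argument — pack disjoint balls and use the lower regularity bound for the Assouad upper estimate, cover by balls and use the upper regularity bound for the lower dimension lower estimate — is exactly what one expects, and your handling of the minor issues (the ball $B(x,2R)$ versus the range of validity of the regularity estimate, the factor-of-two between covers and packings) is fine: these all absorb into the constant $C$ in the definitions, which is insensitive to such multiplicative changes.
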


For a proof of this see, for example, \cite{bylund}.  We will now consider the Assouad and lower dimensions of quasi-self-similar sets, which are a natural class of sets exhibiting a high degree of homogeneity.  We will define quasi-self-similar sets via the implicit theorems of Falconer \cite{implicit} and McLaughlin \cite{mclaughlin}.  These results allow one to deduce facts about the dimensions and measures of a set without having to calculate them explicitly.  This is done by showing that, roughly speaking, parts of the set can be `mapped around' onto other parts without too much distortion.

\begin{defn} \label{quasidef}
A non-empty compact set $F\subseteq (X,d)$ is called quasi-self-similar if there exists $a>0$ and $r_0>0$ such that the following two conditions are satisfied:
\begin{itemize}
\item[(1)] for every set $U$ that intersects $F$ with $\lvert U \rvert \leq r_0$, there is a mapping $g:F \cap U \to F$ satisfying
\[
a \, \lvert U \rvert^{-1} \, \lvert x-y \rvert \, \leq \, \lvert g(x)-g(y) \rvert \qquad \qquad (x,y \in F \cap U)
\]
\item[(2)] for every closed ball $B$ with centre in $F$ and radius $r \leq r_0$, there is a mapping $g:F \to F \cap B$ satisfying
\[
a \,r \, \lvert x-y \rvert \, \leq \, \lvert g(x)-g(y) \rvert \qquad \qquad (x,y \in F)
\]
\end{itemize}
\end{defn}

Writing $s=\dim_\text{H} F$, it was shown in \cite{mclaughlin, implicit} that condition (1) is enough to guarantee that $\mathcal{H}^s(F) \geq a^s >0$ and $\underline{\dim}_\text{B} F = \overline{\dim}_\text{B} F =s$ and it was shown in \cite{implicit} that condition (2) is enough to guarantee $\mathcal{H}^s(F) \leq 4^s\, a^{-s} < \infty$ and $\underline{\dim}_\text{B} F = \overline{\dim}_\text{B} F =s$.  Also see \cite[Chapter 3]{techniques}.  Here we extend these implicit results to include the Assouad and lower dimensions.

\begin{thm} \label{quasi}
Let $F$ be a non-empty compact subset of $X$. 
\begin{itemize}
\item[(1)] If $F$ satisfies condition (1) in the definition of quasi-self-similar, then
\[
\dim_\text{\emph{L}} F  \ \leq \ \dim_\text{\emph{H}} F \ = \ \dim_\text{\emph{P}} F \ = \  \dim_\text{\emph{B}} F  \ =  \  \dim_\text{\emph{A}} F .
\]
\item[(2)] If $F$ satisfies condition (2) in the definition of quasi-self-similar, then
\[
\dim_\text{\emph{L}} F  \ = \ \dim_\text{\emph{H}} F \ = \ \dim_\text{\emph{P}} F \ = \  \dim_\text{\emph{B}} F  \ \leq \  \dim_\text{\emph{A}} F.
\]
\item[(3)] If $F$ satisfies conditions (1) and (2) in the definition of quasi-self-similar, then we have
\[
\dim_\text{\emph{L}} F  \ = \  \dim_\text{\emph{H}} F \ = \ \dim_\text{\emph{P}} F \ = \  \dim_\text{\emph{B}} F \ = \  \dim_\text{\emph{A}} F
\]
and moreover, $F$ is Ahlfors regular.
\end{itemize}
\end{thm}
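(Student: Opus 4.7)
The implicit theorems of Falconer and McLaughlin cited just before the statement already furnish, under condition (1), the equality $\dim_\text{H} F = \dim_\text{B} F = s$ together with $\mathcal{H}^s(F) \geq a^s > 0$, and, under condition (2), the same chain of equalities together with $\mathcal{H}^s(F) < \infty$. Consequently all that remains in parts (1) and (2) is to control $\dim_\text{A} F$ from above, respectively $\dim_\text{L} F$ from below, by the common value $s$; part (3) then reduces to upgrading the Hausdorff measure bounds to the Ahlfors regularity estimate. The plan is to carry out all three steps by a single device: use the maps $g$ furnished by the two conditions to transport covers (and, in the third part, the $s$-dimensional Hausdorff measure) between a small neighbourhood $F \cap B(x,R)$ and the ambient set $F$.

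For part (1), fix $x \in F$ and $0 < r < R$ small, and apply condition (1) with $U = B(x,R)$. The resulting $g : F \cap B(x,R) \to F$ is lower Lipschitz with constant $\geq a/(2R)$, hence injective, and its inverse $g^{-1}$ on $g(F \cap B(x,R))$ is Lipschitz with constant $\leq 2R/a$. Pulling back any cover of $F$ by sets of diameter at most $ar/(2R)$ through $g^{-1}$ produces a cover of $F \cap B(x,R)$ at scale $r$, which gives $N_r(F \cap B(x,R)) \leq N_{ar/(2R)}(F)$. Since $\overline{\dim}_\text{B} F = s$, for every $t > s$ there is $C_t$ with $N_\delta(F) \leq C_t \delta^{-t}$ at small scales, and substituting $\delta = ar/(2R)$ yields $N_r(F \cap B(x,R)) \leq C'_t (R/r)^t$. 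Hence $\dim_\text{A} F \leq t$ for every $t > s$, which, combined with the trivial $\dim_\text{A} F \geq \dim_\text{B} F = s$, proves (1).

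Part (2) is the mirror argument. Applying condition (2) to the ball $B(x,R)$ yields an injective map $g : F \to F \cap B(x,R)$ with $|g(u) - g(v)| \geq aR|u - v|$. Any cover of $F \cap B(x,R)$ at scale $r$ restricts to a cover of $g(F)$, whose preimages under $g$ form a cover of $F$ by sets of diameter at most $r/(aR)$; thus $N_{r/(aR)}(F) \leq N_r(F \cap B(x,R))$. Using $\underline{\dim}_\text{B} F = s$ and the lower bound $N_\delta(F) \geq c_t \delta^{-t}$, valid at small scales for every $t < s$, this rearranges to $N_r(F \cap B(x,R)) \geq c'_t (R/r)^t$, so $\dim_\text{L} F \geq t$ for every $t < s$. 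Combined with Larman's inequality $\dim_\text{L} F \leq \dim_\text{H} F = s$ from the introduction, this gives $\dim_\text{L} F = s$.

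For part (3), the full chain of equalities is immediate from (1) and (2); what remains is to verify Ahlfors regularity. From the implicit theorems we already have $0 < \mathcal{H}^s(F) < \infty$, so I only need $\mathcal{H}^s(F \cap B(x,r)) \asymp r^s$ uniformly in $x \in F$ and small $r$. The two transport arguments above, with covering numbers replaced by Hausdorff measure, deliver both directions at once: $g^{-1}$ from condition (1) is Lipschitz with constant $\leq 2r/a$, so $\mathcal{H}^s(F \cap B(x,r)) \leq (2r/a)^s \mathcal{H}^s(F)$, and the injective lower Lipschitz map $g$ from condition (2) gives $\mathcal{H}^s(F \cap B(x,r)) \geq \mathcal{H}^s(g(F)) \geq (ar)^s \mathcal{H}^s(F) \geq a^{2s} r^s$ using $\mathcal{H}^s(F) \geq a^s$. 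The only mild technicality throughout is that the maps $g$ need not be surjective, so pullbacks must be taken through $g^{-1}$ restricted to the image of $g$; this is harmless because the lower Lipschitz bound forces injectivity. There is no serious obstacle here, as the proof is essentially a tidy repackaging of the implicit-theorem machinery.
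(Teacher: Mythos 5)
Your proof is correct and follows essentially the same route as the paper: use the maps supplied by conditions (1) and (2) to transport covers between $F \cap B(x,R)$ and $F$, convert the lower Lipschitz bounds to diameter estimates on the pushed/pulled-back cover elements, then invoke the known box-dimension scaling of $F$; and for part (3), transport $\mathcal{H}^s$ rather than covers to obtain the two-sided ball measure bounds. If anything your write-up is slightly tidier, as it consistently uses $U = B(x,R)$ throughout where the paper's printed version has a couple of apparent $r$/$R$ typos.
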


The proof of Theorem \ref{quasi} is fairly straightforward, but we defer it to Section \ref{proofquasi}.  We obtain the following corollary which gives useful relationships between the Assouad, lower and Hausdorff dimensions in a variety of contexts. 

\begin{cor}  \label{quasi2}
The following classes of sets are Ahlfors regular and, in particular, have equal Assouad and lower dimension:
\begin{itemize}
\item[(1)] self-similar sets satisfying the open set condition;
\item[(2)] graph-directed self-similar sets satisfying the graph-directed open set condition;
\item[(3)] mixing repellers of $C^{1+\alpha}$ conformal mappings on Riemann manifolds;
\item[(4)]  Bedford's recurrent sets satisfying the open set condition, see \cite{bedfordrec}.
\end{itemize}
The following classes of sets have equal Assouad dimension and Hausdorff dimension:
\begin{itemize}
\item[(5)] sub-self-similar sets satisfying the open set condition, see \cite{subselfsim};
\item[(6)] boundaries of self-similar sets satisfying the open set condition.
\end{itemize}
The following classes of sets have equal lower dimension and Hausdorff dimension regardless of separation conditions:
\begin{itemize}
\item[(7)] self-similar sets;
\item[(8)] graph-directed self-similar sets;
\item[(9)] Bedford's recurrent sets, see \cite{bedfordrec};
\end{itemize}
\end{cor}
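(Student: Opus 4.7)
The plan is to verify, class by class, that each family satisfies the appropriate condition(s) from Definition \ref{quasidef} and then invoke Theorem \ref{quasi}. Classes (1)--(4) will require both conditions and so will deliver Ahlfors regularity via part (3) of the theorem; classes (5)--(6) will need only condition (1); and classes (7)--(9) only condition (2). The whole corollary thus reduces to producing, for each class, maps of the type demanded by Definition \ref{quasidef}, and in every instance the natural candidates come from the underlying dynamics.

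For a self-similar IFS $\{S_1,\dots,S_N\}$ with attractor $F$, I would use the cylinder maps $S_\mathbf{i} = S_{i_1} \circ \cdots \circ S_{i_k}$ as the maps $g$. For condition (2), given any closed ball $B(x,r)$ with $x \in F$ and $r$ small, a standard argument produces a word $\mathbf{i}$ with $|S_\mathbf{i}(F)| \asymp r$ and $S_\mathbf{i}(F) \subseteq B(x,r)$; then $g = S_\mathbf{i}$ will satisfy condition (2) with constant depending only on the minimum contraction ratio. This needs no separation hypothesis, so it will settle classes (7), (8) and (9) once the argument is transferred through the graph-directed and recurrent setups. For condition (1), given $U$ meeting $F$ with $|U|$ small, the OSC lets us select a cylinder $\mathbf{i}$ with $|S_\mathbf{i}| \asymp |U|$ for which $S_\mathbf{i}^{-1}$ is well defined on $F \cap U$ and maps into $F$ with the required expansion factor; combined with condition (2) this will resolve classes (1), (2) and (4). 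Class (3) instead uses the bounded distortion principle for $C^{1+\alpha}$ conformal expanding maps in place of the cylinder-scale estimates, with topological mixing supplying the surjectivity demanded by both conditions.

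For the sub-self-similar case (5), the defining inclusion $F \subseteq \bigcup_i S_i(F)$ ensures that each cylinder inverse $S_\mathbf{i}^{-1}$ sends pieces of $F$ back into $F$; an OSC-based cylinder selection then produces, for each small $U$ meeting $F$, a cylinder $\mathbf{i}$ with $|S_\mathbf{i}| \asymp |U|$ and $F \cap U \subseteq S_\mathbf{i}(F)$, verifying condition (1). Condition (2) will generally fail since $F$ need not contain a complete scaled copy of itself near each point, which is precisely why only the Assouad-equals-Hausdorff statement survives in (5) and (6). Case (6) will reduce to (5) because the boundary of an OSC self-similar attractor inherits a natural sub-self-similar structure from the same IFS. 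The main obstacle throughout is bookkeeping rather than depth: the only genuinely technical ingredient is the conformal distortion estimate used in part (3), and this is a classical tool in conformal dynamics.
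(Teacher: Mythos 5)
Your proposal follows the paper's approach exactly: reduce each item of the corollary to Theorem \ref{quasi} by checking which of the two conditions in Definition \ref{quasidef} each class satisfies, with (1)--(4) satisfying both, (5)--(6) satisfying condition (1), and (7)--(9) satisfying condition (2). The paper's own proof is a one-line citation --- it simply refers the reader to \cite{implicit, subselfsim} for the fact that these classes are (sub/super) quasi-self-similar, rather than re-proving those inclusions. So the structural reduction is identical; what you have added is a sketch of the underlying verifications from the cited references.

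Those sketches are in the right spirit, and most are sound, but one step is glossed over in a way worth flagging. For condition (1) under the open set condition (classes (1), (2), (4), and (5)), you assert that for any small $U$ meeting $F$ one can select a single cylinder word $\mathbf{i}$ with $|S_\mathbf{i}| \asymp |U|$ such that $S_\mathbf{i}^{-1}$ maps $F\cap U$ into $F$, i.e.\ $F\cap U \subseteq S_\mathbf{i}(F)$. Under OSC alone this is not automatic: a small set $U$ may straddle several cylinders of comparable scale that do not share a common ancestor at that scale, so no single $S_\mathbf{i}^{-1}$ works on all of $F\cap U$. Handling this (via bounded multiplicity of overlapping cylinders, Schief-type consequences of OSC, or a measure-theoretic route as in Falconer's implicit theorems) is precisely what the cited paper \cite{implicit} supplies and what your sketch elides. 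By contrast, your treatment of condition (2) (following the coding of $x$ to find a cylinder with $S_\mathbf{i}(F)\subseteq B(x,r)$, needing no separation) is genuinely elementary and matches the standard argument for classes (7)--(9). In short: same proof strategy as the paper, more expository detail, one genuine subtlety in the OSC verification that the references resolve but your sketch does not.
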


\begin{proof}
This follows immediately from Theorem \ref{quasi} and the fact that the sets in each of the classes (1)-(4) are quasi-self-similar, see \cite{implicit}; the sets in each of the classes (5)-(6) satisfy condition (1) in the definition of quasi-self-similar, see \cite{implicit, subselfsim} and the sets in each of the classes (7)-(9) satisfy condition (2) in the definition of quasi-self-similar, see \cite{implicit}.
\end{proof}

We do not claim that all the information presented in the above corollary is new.  For example, the fact that self-similar sets satisfying the open set condition are Ahlfors regular dates back to Hutchinson, see \cite{hutch}.  Also, Olsen \cite{olsenassouad} recently gave a direct proof that graph-directed self-similar sets (more generally, graph-directed moran constructions) have equal Hausdorff dimension and Assouad dimension.  Corollary \ref{quasi2} unifies previous results and demonstrates further that sets with equal Assouad dimension and lower dimension should display a high degree of homogeneity.
\\ \\
Finally, we remark that Theorem \ref{quasi} is sharp, in that the inequalities in parts (1) and (2) cannot be replaced with equalities in general.  To see this note that the inequality in (1) is sharp as the unit interval union a single isolated point satisfies condition (1) in the definition of quasi-self-similar, but has lower dimension strictly less than Hausdorff dimension and the inequality in (2) is sharp because self-similar sets which do not satisfy the open set condition can have Assouad dimension strictly larger than Hausdorff dimension and such sets satisfy condition (2) in the definition of quasi-self-similar.  We will prove this in Section \ref{selfsimexample} by providing an example.

\subsection{Dimension results for self-affine sets}

In this section we state our main results on the Assouad and lower dimensions of self-affine sets.  In contrast to the sets considered in the previous section, self-affine sets often exhibit a high degree of inhomogeneity.  This is because the mappings can stretch by different amounts in different directions.  We will refer to a set $F$ as a \emph{self-affine carpet} if it is the attractor of an IFS in the extended Lalley-Gatzouras or Bara\'nski class, discussed in Section \ref{affineintro}, which has at least one map which is not a similarity.  Note that the reason we assume that one of the mappings is not a similarity is so that the sets are genuinely self-affine.  The dimension theory for genuinely self-affine sets is very different from self-similar sets and we intentionally keep the two classes separate, with the self-similar case dealt with in the previous section.  We will divide the class of self-affine carpets into three subclasses, \emph{horizontal}, \emph{vertical} and \emph{mixed}, which will be described below.  In order to state our results, we need to introduce some notation. Throughout this section $F$ will be a self-affine carpet which is the attractor of an IFS $\{S_i\}_{i \in \mathcal{I}}$ for some finite index set $\mathcal{I}$, with $\lvert \mathcal{I} \rvert \geq 2$.  The maps $S_i$ in the IFS will be translate linear orientation preserving contractions on $[0,1]^2$ of the form
\[
S_i \big((x,y)\big) = (c_ix, d_iy)+(a,b)
\]
for some contraction constants $c_i \in (0,1)$ in the horizontal direction and $d_i \in (0,1)$ in the vertical direction and a translation $(a,b) \in \mathbb{R}^2$.  We will say that $F$ is of \emph{horizontal type} if $c_i\geq d_i$ for all $i \in \mathcal{I}$; of \emph{vertical type} if $c_i\leq d_i$ for all $i \in \mathcal{I}$; and of \emph{mixed type} if $F$ falls into neither the horizontal or vertical classes.  We remark here that the horizontal and vertical classes are equivalent as one can just rotate the unit square by $90^\text{o}$ to move from one class to the other.  The horizontal (and hence also vertical) class is precisely the Lalley-Gatzouras class and the Bara\'nski class is split between vertical, horizontal and mixed, with carpets of mixed type being considerably more difficult to deal with and represent the major advancement of the work of Bara\'nski \cite{baranski} over the much earlier work by Lalley and Gatzouras \cite{lalley-gatz}.
\\ \\
Let $\pi_1$ denote the projection mapping from the plane to the horizontal axis and $\pi_2$ denote the projection mapping from the plane to the vertical axis. Also, for $i \in \mathcal{I}$ let
\[
\text{Slice}_{1,i}(F) = \text{the vertical slice of $F$ through the fixed point of $S_i$}
\]
and let 
\[
\text{Slice}_{2,i}(F) = \text{the horizontal slice of $F$ through the fixed point of $S_i$}.
\]
Note that the sets $\pi_1 (F)$, $\pi_2(F)$, $\text{Slice}_{1,i}(F)$ and $\text{Slice}_{2,i}(F)$ are self-similar sets satisfying the open set condition and so their box dimension can be computed via Hutchinson's formula.  We can now state our dimension results.

\begin{thm} \label{uppermain}
Let $F$ be a self-affine carpet.  If $F$ is of horizontal type, then
\[
\dim_\text{\emph{A}} F \ = \   \dim_\text{\emph{B}} \pi_1(F) \, + \,  \max_{i \in \mathcal{I}} \,  \dim_\text{\emph{B}}  \text{\emph{Slice}}_{1,i}(F);
\]
if $F$ is of vertical type, then
\[
\dim_\text{\emph{A}} F \ = \   \dim_\text{\emph{B}} \pi_2(F) \, + \,  \max_{i \in \mathcal{I}} \,  \dim_\text{\emph{B}}  \text{\emph{Slice}}_{2,i}(F);
\]
and if $F$ is of mixed type, then
\[
\dim_\text{\emph{A}} F \ = \    \max_{i \in \mathcal{I}} \, \max_{j=1,2} \, \Big(\dim_\text{\emph{B}} \pi_j(F) \, + \,  \dim_\text{\emph{B}}  \text{\emph{Slice}}_{j,i}(F) \Big).
\]
\end{thm}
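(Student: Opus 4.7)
My strategy is to establish matching upper and lower bounds by two different techniques: the lower bound via the weak tangent method of Mackay and Tyson, and the upper bound by an iterated covering argument using the self-affine structure directly. I describe the horizontal case ($c_i \geq d_i$ for all $i \in \mathcal{I}$); the vertical case is obtained by rotation, and the mixed case is addressed last as it is the principal difficulty.

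\textbf{Lower bound via weak tangents.} For each fixed $i \in \mathcal{I}$, I would construct a weak tangent $\hat F$ of $F$ containing a bi-Lipschitz copy of the product $\pi_1(F) \times \text{Slice}_{1,i}(F)$. The construction uses iterated cylinders of the form $S_i^n \circ S_{\mathbf{k}_n}([0,1]^2)$, where each auxiliary word $\mathbf{k}_n$ is chosen so that the composite contraction ratios in the two coordinate directions are comparable and the resulting cylinder is approximately square. Rescaling each such cylinder back to the unit square by the natural affine map and extracting a Hausdorff-convergent subsequence produces the tangent $\hat F$. Repeated application of $S_i$ drives the short direction onto the fixed point of $S_i$, so the transverse structure approaches $\text{Slice}_{1,i}(F)$, while the long direction accumulates the full self-similar structure of $\pi_1(F)$. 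Since both factors are self-similar sets satisfying the open set condition, Corollary~\ref{quasi2} makes them Ahlfors regular, so their lower, box, and Assouad dimensions all coincide. Applying Theorem~\ref{lowerassproduct} to the product and the weak tangent principle $\dim_\text{A} F \geq \dim_\text{A} \hat F$ yields
\[
\dim_\text{A} F \,\geq\, \dim_\text{A} \hat F \,\geq\, \dim_\text{L} \pi_1(F) + \dim_\text{L} \text{Slice}_{1,i}(F) \,=\, \dim_\text{B} \pi_1(F) + \dim_\text{B} \text{Slice}_{1,i}(F),
\]
and maximising over $i \in \mathcal{I}$ completes the lower bound.

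\textbf{Upper bound by iterated covering.} Given $B(x,R) \cap F$ and $0 < r < R$, I would select a stopping word $\mathbf{i}$ whose cylinder $S_\mathbf{i}([0,1]^2)$ has $c_\mathbf{i}$ comparable to $R$. The horizontal hypothesis forces $d_\mathbf{i} \leq c_\mathbf{i}$, and only boundedly many such cylinders meet $B(x,R)$. Pulling back by $S_\mathbf{i}^{-1}$ reduces the task to covering $F$ by anisotropic rectangles of sides $r/c_\mathbf{i}$ and $r/d_\mathbf{i}$, with the first scale no larger than the second. This anisotropic count factorises: the projection $\pi_1(F)$ at the horizontal scale governs the number of vertical strips needed, and within each strip the vertical count is dictated by the slice through the horizontally-closest fixed point, hence bounded by a box count at scale $r/d_\mathbf{i}$ of the largest such slice. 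Summing geometrically over the iteration levels between $R$ and $r$ and bounding each factor by its box dimension gives the estimate $(R/r)^{\dim_\text{B} \pi_1(F) + \max_{i \in \mathcal{I}} \dim_\text{B} \text{Slice}_{1,i}(F) + \varepsilon}$ for arbitrary $\varepsilon > 0$, which is exactly what the Assouad dimension requires.

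\textbf{Main obstacle.} The mixed-type case is the chief technical hurdle. As one iterates the IFS, the aspect ratio of the cylinder rectangle may flip between wider-than-tall and taller-than-wide depending on which maps $S_i$ are composed, so there is no single privileged direction to privilege as ``long''. The plan is to decompose any iteration path into maximal subblocks of consistent orientation and invoke the slice formula in the direction that is momentarily longer within each subblock; the double maximum over $j \in \{1,2\}$ and $i \in \mathcal{I}$ in the statement then absorbs the worst such subblock. The matching mixed-case lower bound uses weak tangents constructed from subwords realising each pair $(i,j)$ in turn, precisely mirroring the upper-bound decomposition.
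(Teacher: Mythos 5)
Your framework --- weak tangents for the lower bound, iterated covering for the upper bound --- is the paper's, and you rightly flag the mixed case as the chief difficulty, but the proposal omits the key intermediate object, the \emph{approximate square} $Q(\mathbf{i},r)$, and this produces genuine gaps in both halves.

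For the lower bound, no auxiliary word $\mathbf{k}_n$ can make the cylinder $S_i^n \circ S_{\mathbf{k}_n}([0,1]^2)$ approximately square in the horizontal class: since every map has $c_j \geq d_j$, every word has $c_{\mathbf{k}_n} \geq d_{\mathbf{k}_n}$, so the aspect ratio $c_{i^n\mathbf{k}_n}/d_{i^n\mathbf{k}_n} \geq (c_i/d_i)^n \to \infty$ --- composing more maps only increases thinness. The object you actually need is a vertical column cut out of the long cylinder $S_{i^n}([0,1]^2)$, the approximate square of side comparable to $d_i^n$, which can then be carried to $[0,1]^2$ by a genuine \emph{similarity} as Proposition~\ref{weaktang} requires; rescaling a thin cylinder anisotropically to a square is not permitted in the weak-tangent machinery. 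For the upper bound, stopping at $c_\mathbf{i} \approx R$ and claiming that only boundedly many such cylinders meet $B(x,R)$ is false in the horizontal class, since those cylinders have heights $d_\mathbf{i}$ which can be $\ll R$, allowing on the order of $R/\min_\mathbf{i} d_\mathbf{i}$ of them to stack vertically inside the ball. The stopping must be at $\alpha_2(\mathbf{i}) \approx R$ (the \emph{shorter} singular value), after which only boundedly many approximate squares of side $\approx R$ meet $B(x,R)$ (Lemma~\ref{cubes}). Even granting that, the ``factorisation'' and ``geometric summing'' steps are too loose: the resulting sum over stopping cylinders is not geometric, and the paper controls it via the Bara\'nski exponents $D_A$, $D_B$ (Lemmas~\ref{DADB} and~\ref{DADBadditive}) together with a pseudo-stopping estimate (Lemma~\ref{selfsim}). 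Finally, the paper's mixed-case argument does not decompose iteration paths into orientation-consistent subblocks as you propose; it instead splits the \emph{stopping cylinders} $S_{\mathbf{ij}}$ into two families according to which singular value of $S_{\mathbf{ij}}$ is smaller. Your path-decomposition heuristic is a genuinely different idea and would need substantial work to show it yields the double-maximum formula.
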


We will prove Theorem \ref{uppermain} for the mixed class in Section \ref{upperbaranski} and for the horizontal and vertical classes in Section \ref{upperlalley}. If $F$ is in the (non-extended) Lalley-Gatzouras class, then the above result was obtained in \cite{mackay}.  

\begin{thm} \label{lowermain}
Let $F$ be a self-affine carpet.  If $F$ is of horizontal type, then
\[
\dim_\text{\emph{L}} F \ = \   \dim_\text{\emph{B}} \pi_1(F) \, + \,  \min_{i \in \mathcal{I}} \,  \dim_\text{\emph{B}}  \text{\emph{Slice}}_{1,i}(F);
\]
if $F$ is of vertical type, then
\[
\dim_\text{\emph{L}} F \ = \   \dim_\text{\emph{B}} \pi_2(F) \, + \,  \min_{i \in \mathcal{I}} \,  \dim_\text{\emph{B}}  \text{\emph{Slice}}_{2,i}(F);
\]
and if $F$ is of mixed type, then
\[
\dim_\text{\emph{L}} F \ = \    \min_{i \in \mathcal{I}} \, \min_{j=1,2} \, \Big(\dim_\text{\emph{B}} \pi_j(F) \, + \,  \dim_\text{\emph{B}}  \text{\emph{Slice}}_{j,i}(F) \Big).
\]
\end{thm}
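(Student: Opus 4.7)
\textbf{Proof plan for Theorem \ref{lowermain}.}

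The plan is to establish matching upper and lower bounds on $\dim_\text{L} F$ in each case, in parallel with the proof of Theorem \ref{uppermain} but with the roles of maxima and minima reversed. Since the vertical class is obtained from the horizontal class by rotating the unit square by $90^\circ$, only the horizontal and mixed cases require separate treatment.

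For the upper bound $\dim_\text{L} F \leq \dim_\text{B} \pi_1(F) + \min_{i \in \mathcal{I}} \dim_\text{B} \text{Slice}_{1,i}(F)$ in the horizontal case, let $i^* \in \mathcal{I}$ achieve the minimum and let $p$ be the fixed point of $S_{i^*}$. The strategy is to exhibit a sequence of balls $B(p, R_n)$ with inner scale $r_n$ chosen so that $R_n/r_n \to \infty$ but the covering number $N_{r_n}(B(p, R_n) \cap F)$ is efficient: take $R_n$ comparable to $c_{i^*}^n$ (the horizontal scale after $n$ applications of $S_{i^*}$) and $r_n$ comparable to $d_{i^*}^n c_{i^*}^k$ after $k$ further iterations along the slowest column. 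The ball $B(p, R_n) \cap F$ then decomposes approximately as a union of at most $\lesssim (R_n/r_n)^{\dim_\text{B} \pi_1(F)}$ vertical strips, each intersecting $F$ in a set comparable to a rescaled copy of $\text{Slice}_{1,i^*}(F)$, which can be covered by $\lesssim (R_n/r_n)^{\dim_\text{B} \text{Slice}_{1,i^*}(F)}$ balls of radius $r_n$. Multiplying these counts produces the desired upper bound. Alternatively, one can phrase this as the construction of a very weak tangent to $F$ obtained by zooming in anisotropically near $p$: the limit set is bi-Lipschitz to $\pi_1(F) \times \text{Slice}_{1,i^*}(F)$, which is Ahlfors regular by Corollary \ref{quasi2}, so Theorem \ref{lowerassproduct} forces $\dim_\text{L}(\pi_1(F) \times \text{Slice}_{1,i^*}(F)) = \dim_\text{B} \pi_1(F) + \dim_\text{B} \text{Slice}_{1,i^*}(F)$, and the general monotonicity $\dim_\text{L} F \leq \dim_\text{L} T$ for very weak tangents $T$ closes the argument.

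For the reverse inequality I would argue by direct covering. For $x \in F$ and $0 < r < R$ small, identify a characteristic cylinder $S_\mathbf{i}([0,1]^2)$ of horizontal side comparable to $R$ meeting $B(x, R)$, and count its sub-cylinders at horizontal scale $r$. The horizontal direction contributes at least $\asymp (R/r)^{\dim_\text{B} \pi_1(F)}$ by Ahlfors regularity of $\pi_1(F)$, which gives $\dim_\text{L} \pi_1(F) = \dim_\text{B} \pi_1(F)$ (Corollary \ref{quasi2}). Within each vertical strip the number of further sub-cylinders is bounded below by the covering count of the relevant slice, which for every $i$ is at least $\asymp (\text{ratio})^{\min_i \dim_\text{B} \text{Slice}_{1,i}(F)}$ by the same Ahlfors regularity argument applied to each slice individually. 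Tracking the geometric progression of sub-cylinder sizes across the full scale range $[r, R]$ and combining the two counts yields a universal lower bound $C(R/r)^\alpha$ on $N_r(B(x, R) \cap F)$.

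The main obstacle will be the mixed class. Here the cylinders can be either wider than tall or taller than wide depending on which map is applied, so the very weak tangent construction must be carried out in both orientations (one very weak tangent for each pair $(i,j) \in \mathcal{I} \times \{1,2\}$) and the minimum over the resulting upper bounds taken. Correspondingly, the covering argument must switch between horizontal and vertical decompositions based on the orientation of the longest cylinder intersecting the ball at each scale, and a careful stopping-time argument on the symbolic coding of cylinders is needed to ensure that every $(x, R, r)$ is handled uniformly. This asymmetry is precisely what made the Bara\'nski class substantially harder than the Lalley-Gatzouras class in the original Hausdorff dimension computation of \cite{baranski}, and a comparable level of book-keeping is expected here; I anticipate it to be the most technical part of the proof.
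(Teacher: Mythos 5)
Your high-level plan matches the paper's: establish the upper bound by constructing an appropriate tangent set of product form, establish the lower bound by direct covering of approximate squares, and deal with the extra orientational ambiguity in the mixed case separately. However, there are two genuine gaps, both at exactly the points where the lower-dimension argument diverges from the Assouad-dimension argument of Theorem \ref{uppermain}.

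The first gap is in the upper bound. You write that ``the general monotonicity $\dim_\text{L} F \leq \dim_\text{L} T$ for very weak tangents $T$ closes the argument.'' This inequality is \emph{false} for ordinary weak tangents in the Mackay--Tyson sense: blowing up a set can manufacture isolated points or near-isolated points in the limit, which drops the lower dimension to zero even when $F$ has positive lower dimension, so there is no free monotonicity to invoke. The paper has to introduce a strictly stronger notion (Proposition \ref{lowerweaktang}): the blow-up maps $T_k$ must be uniformly bi-Lipschitz, and crucially there must exist $\theta \in (0,1]$ such that every ball $B(\hat{x},r)$ with $\hat x \in \hat{F}$ contains a full ball $B(\hat{y}, r\theta) \subseteq B(\hat{x},r)\cap X$ with $\hat y \in \hat F$. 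Only under this condition does $\dim_\text{L} F \leq \dim_\text{L} \hat{F}$ hold. Verifying the $\theta$-condition for the product $\pi_1(F)\times\pi_2(\text{Slice}_{1,i}(F))$ takes real work (Lemma \ref{weaktangprod2}), and it fails outright in the case where the rectangle indexed by $i$ is alone in its column, forcing an additional $\varepsilon$-perturbation argument with an iterated IFS. Your plan would need to identify and formulate this $\theta$-condition, or the upper bound breaks down.

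The second gap is in the lower bound for the mixed class. You anticipate only ``a comparable level of book-keeping,'' but there is a concrete obstruction that does not arise in the Assouad-dimension proof. If one mimics the three-term decomposition of the cover of $Q(\textbf{\emph{i}}',R)$ from the upper-bound proof of Theorem \ref{uppermain}, the term coming from sub-cylinders with the short side horizontal produces a sum of the form $\sum_{\textbf{\emph{i}} \in \mathcal{I}_Q} (d_{\textbf{\emph{i}}}/R)^{s_2-\varepsilon}$ which must be bounded \emph{below}; since $s_2$ may exceed $u_1$, Lemma \ref{selfsim} gives no such bound, and in fact none is available. The paper resolves this by constructing a horizontal subsystem $\mathcal{I}_\varepsilon$ (Lemma \ref{Hexists}), a finite subset of a stopping of $\mathcal{I}^*$ with $c_{\textbf{\emph{j}}} \geq d_{\textbf{\emph{j}}}$ throughout whose attractor $F_\varepsilon \subset F$ has box dimension at least $s_1 + u_1 - \varepsilon$, and then iterating only within $\mathcal{I}_\varepsilon$ so that the problematic third term never appears. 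Without this device the covering argument you describe cannot be completed in the mixed case. Finally, a smaller point: your first-sketched route for the reverse inequality invokes ``Ahlfors regularity of $\pi_1(F)$'' giving a lower bound per scale, but the estimate you actually need concerns the cover of a single approximate square at scales between $r$ and $R$, not a global regularity statement; the paper makes this precise by counting disjoint sub-rectangles of comparable short-side using a bounded-overlap constant $M_\varepsilon$, and you would need an analogous counting argument.
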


We will prove Theorem \ref{lowermain} for the mixed class in Section \ref{lowerbaranski} and for the horizontal and vertical classes in Section \ref{lowerlalley}.  We remark here that the formulae presented in Theorems \ref{uppermain} and \ref{lowermain} are completely explicit and can be computed easily to any required degree of accuracy.  It is interesting to investigate conditions for which the dimensions discussed here are equal or distinct.  Mackay \cite{mackay} noted a fascinating dichotomy for the Lalley-Gatzouras class in that either the Hausdorff dimension, box dimension and Assouad dimension are all distinct or are all equal.  We obtain the following extension of this result.

\begin{cor} \label{intcor1}
Let $F$ be a self-affine carpet in the horizontal or vertical class.  Then either
\[
\dim_\text{\emph{L}} F < \dim_\text{\emph{H}} F < \dim_\text{\emph{B}} F < \dim_\text{\emph{A}} F 
\]
or
\[
\dim_\text{\emph{L}} F = \dim_\text{\emph{H}} F = \dim_\text{\emph{B}} F = \dim_\text{\emph{A}} F.
\]
\end{cor}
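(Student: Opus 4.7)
The plan is to reduce to the horizontal class by the $90^\circ$ rotation interchanging the two cases, so assume $F$ is in the extended Lalley--Gatzouras class. Write $s = \dim_\text{B} \pi_1(F)$ and $\sigma_i = \dim_\text{B} \text{Slice}_{1,i}(F)$ for $i \in \mathcal{I}$. Theorems \ref{uppermain} and \ref{lowermain} give
\[
\dim_\text{A} F = s + \max_i \sigma_i, \qquad \dim_\text{L} F = s + \min_i \sigma_i,
\]
so the two conclusions of the corollary correspond to whether all $\sigma_i$ coincide (equality case) or not (strict case). In the equality case, $\dim_\text{L} F = \dim_\text{A} F$ and the general chain $\dim_\text{L} F \le \dim_\text{H} F \le \dim_\text{B} F \le \dim_\text{A} F$ collapses to give that all four dimensions coincide.

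Suppose now the $\sigma_i$ are not all equal; the aim is to upgrade every inequality in $\dim_\text{L} F \le \dim_\text{H} F \le \dim_\text{B} F \le \dim_\text{A} F$ to a strict one. I would handle the upper three via Mackay's dichotomy \cite{mackay}: if $\dim_\text{H} F = \dim_\text{A} F$, then Mackay would force $\dim_\text{B} F = \dim_\text{A} F$, but the Lalley--Gatzouras box dimension formula gives $\dim_\text{B} F < s + \max_i \sigma_i = \dim_\text{A} F$ whenever the slice dimensions are non-constant, a contradiction. Hence $\dim_\text{H} F < \dim_\text{A} F$, and Mackay then delivers $\dim_\text{H} F < \dim_\text{B} F < \dim_\text{A} F$.

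The only remaining strict inequality is $\dim_\text{L} F < \dim_\text{H} F$. For this I would apply the Lalley--Gatzouras variational formula
\[
\dim_\text{H} F = \sup_\mu \left\{ \frac{H(\pi_*\mu)}{\chi_1(\mu)} + \frac{H(\mu) - H(\pi_*\mu)}{\chi_2(\mu)} \right\}
\]
over Bernoulli measures and evaluate on the concrete measure that assigns mass $c_j^s \, d_{j,k}^{\sigma_j}$ to the map in column $j$ of height $d_{j,k}$; the weights sum to $1$ because $\sum_j c_j^s = 1$ and $\sum_k d_{j,k}^{\sigma_j} = 1$ for every column $j$. A direct computation yields $H(\pi_*\mu)/\chi_1(\mu) = s$ and
\[
\frac{H(\mu) - H(\pi_*\mu)}{\chi_2(\mu)} = \frac{\sum_j c_j^s \sigma_j A_j}{\sum_j c_j^s A_j}, \qquad A_j := -\sum_k d_{j,k}^{\sigma_j} \log d_{j,k} > 0.
\]
This vertical contribution is a convex combination of the $\sigma_j$ with strictly positive weights, which strictly exceeds $\min_i \sigma_i$ whenever the slice dimensions vary. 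Hence $\dim_\text{H} F \ge \dim_\text{H} \mu > s + \min_i \sigma_i = \dim_\text{L} F$.

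The main obstacle is this final step: the lower dimension is controlled solely by the thinnest column of $F$, while the Hausdorff dimension averages over all columns via a variational principle, and one must design the Bernoulli measure so that its horizontal entropy contribution is exactly $s$ while its vertical contribution is a non-trivially mixed convex combination of slice dimensions. The rest of the argument is a direct packaging of Theorems \ref{uppermain}--\ref{lowermain} with Mackay's dichotomy.
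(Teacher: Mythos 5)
Your proof is correct, and for the key strict inequality $\dim_\text{L} F < \dim_\text{H} F$ you take a genuinely different, and arguably cleaner, route than the paper. The paper handles that step by a deformation argument: it continuously decreases the vertical contraction ratios $d_{ij}$ (treating $\min_i\sigma_i = 0$ as a separate trivial case) to produce a comparison IFS $\mathcal{I}_1$ whose attractor $F_1$ has all slice dimensions equal to $\min_i\sigma_i$, so that $\dim_\text{L} F = \dim_\text{H} F_1$, and then appeals to the strict monotonicity of the Lalley--Gatzouras Hausdorff-dimension formula in the $d_{ij}$ to get $\dim_\text{H} F_1 < \dim_\text{H} F$. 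You instead evaluate the variational formula at the single explicit Bernoulli measure $p_{j,k} = c_j^s d_{j,k}^{\sigma_j}$ (whose weights sum to $1$ because $\sum_j c_j^s = 1$ and $\sum_k d_{j,k}^{\sigma_j} = 1$), computing directly that the horizontal term is $s$ and the vertical term is a strictly mixed convex combination $\sum_j c_j^s\sigma_j A_j/\sum_j c_j^s A_j$ of the $\sigma_j$, which exceeds $\min_j\sigma_j$ strictly as soon as the $\sigma_j$ are non-constant. This is more constructive, sidesteps having to justify strict monotonicity of a supremum under continuous deformation of the IFS, and handles the $\min_i\sigma_i = 0$ case automatically. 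For the upper strict inequalities $\dim_\text{H} F < \dim_\text{B} F < \dim_\text{A} F$ you route through Mackay's dichotomy plus the observation that non-constant slice dimensions force $D_A < s_1 + t_1$; the paper runs a dual deformation argument, increasing the $d_{ij}$ (and adding maps if needed) to reach a comparison carpet with uniform vertical fibres and box dimension $\dim_\text{A} F$. Both of these ultimately rest on the same Lalley--Gatzouras facts, so that part is a light repackaging rather than a new idea. One small caveat you share with the paper: the horizontal class here is the \emph{extended} Lalley--Gatzouras class (permitting $c_i = d_i$), while Mackay's dichotomy and the Lalley--Gatzouras formulas are, strictly speaking, stated for the non-extended class; the paper flags this and notes the arguments carry over, and you should do likewise.
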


We will prove Corollary \ref{intcor1} in Section \ref{proofintcor1}.  It is natural to wonder if this dichotomy also holds for the mixed class.  In fact it does not and in Section \ref{examplesB} we provide an example of a self-affine set in the mixed class for which $\dim_\text{L} F < \dim_\text{H} F = \dim_\text{B} F = \dim_\text{A} F$.  We do obtain the following slightly weaker result.

\begin{cor} \label{intcor2}
Let $F$ be a self-affine carpet.  Then either
\[
\dim_\text{\emph{L}} F <  \dim_\text{\emph{B}} F
\]
or
\[
\dim_\text{\emph{L}} F = \dim_\text{\emph{H}} F = \dim_\text{\emph{B}} F = \dim_\text{\emph{A}} F.
\]
\end{cor}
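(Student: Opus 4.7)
The strategy is to dispose of the horizontal/vertical case using previously established results, and then to handle the mixed case by combining the min/max formulas of Theorems \ref{uppermain} and \ref{lowermain} with Bara\'nski's explicit formula for the box dimension.

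If $F$ is of horizontal or vertical type, Corollary \ref{intcor1} gives the stronger dichotomy, from which Corollary \ref{intcor2} is immediate. Suppose instead that $F$ is of mixed type and that $\dim_\text{L} F = \dim_\text{B} F$. The standard chain $\dim_\text{L} F \leq \dim_\text{H} F \leq \dim_\text{B} F$ already yields $\dim_\text{L} F = \dim_\text{H} F = \dim_\text{B} F$, so only $\dim_\text{A} F = \dim_\text{B} F$ remains to be established. By Theorem \ref{lowermain}, the hypothesis translates into the pointwise lower bound $\dim_\text{B} \text{Slice}_{j,i}(F) \geq \dim_\text{B} F - \dim_\text{B} \pi_j(F)$ for every $i \in \mathcal{I}$ and $j \in \{1,2\}$; in view of Theorem \ref{uppermain}, it will suffice to upgrade each of these inequalities to an equality.

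For the upgrade we invoke Bara\'nski's box dimension formula for mixed carpets, which writes $\dim_\text{B} F = \max(d_1, d_2)$, with each $d_j$ arising from a Lalley--Gatzouras-type pressure equation associated to direction $j$. Standard manipulations show that $d_j = \dim_\text{B} \pi_j(F) + \sigma_j$, where $\sigma_j$ is a \emph{strictly positive} convex combination of the slice dimensions $\{\dim_\text{B} \text{Slice}_{j,i}(F) : i \in \mathcal{I}\}$. Any such convex combination lies between the minimum and the maximum of the averaged quantities, so the inequality from the previous step gives $\sigma_j \geq \dim_\text{B} F - \dim_\text{B} \pi_j(F)$ and hence $d_j \geq \dim_\text{B} F$. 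Combined with $\dim_\text{B} F = \max(d_1, d_2)$ this forces $d_j = \dim_\text{B} F$ for each $j$, and the strict positivity of the weights then forces every slice dimension $\dim_\text{B} \text{Slice}_{j,i}(F)$ to coincide with the common value $\dim_\text{B} F - \dim_\text{B} \pi_j(F)$. The pointwise equalities now yield $\dim_\text{A} F = \max_{i,j}(\dim_\text{B} \pi_j(F) + \dim_\text{B} \text{Slice}_{j,i}(F)) = \dim_\text{B} F$ via Theorem \ref{uppermain}, completing the proof.

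The principal obstacle is justifying the strictly-positive convex-combination description of $\sigma_j$---that is, showing that every $i \in \mathcal{I}$, including maps whose stronger contraction is in the ``wrong'' direction relative to $j$, contributes a positive weight to the pressure formula for $d_j$. This should be verifiable by differentiating Bara\'nski's pressure function at $s = d_j$ and tracking how each map enters, and it is the step where the mixed setting differs qualitatively from the Lalley--Gatzouras case treated in Corollary \ref{intcor1}.
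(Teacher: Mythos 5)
Your strategy is essentially the paper's: dispose of the horizontal/vertical case via Corollary \ref{intcor1}, then in the mixed case convert $\dim_\text{L} F = \dim_\text{B} F$ into a lower bound on every slice dimension and push it through the pressure equations for $D_A$ and $D_B$. The one place where your write-up is imprecise is the intermediate claim that $\sigma_j = D_A - s_1$ (resp.\ $D_B - s_2$) is a ``strictly positive convex combination of the slice dimensions.'' Taken literally this is false---$\sigma_j$ is defined implicitly by $\sum_{i\in\mathcal{I}} c_i^{s_1} d_i^{\sigma_j} = 1$ and is not a weighted average---but the two facts you actually use (that $\sigma_j$ lies in $[u_j,t_j]$, and that if every slice dimension is $\geq\sigma_j$ then they all equal $\sigma_j$) are correct, and both drop out of the column decomposition already present in the proof of Lemma \ref{DADB}. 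Writing $\tau_k$ for the common slice dimension over column $\mathcal{C}_k$, so that $\sum_{i\in\mathcal{C}_k} d_i^{\tau_k}=1$, and recalling $\sum_{k}\hat{c}_k^{s_1}=1$: if $\sigma_j\leq\tau_k$ for all $k$ then $\sum_{i\in\mathcal{C}_k}d_i^{\sigma_j}\geq 1$ for each $k$, whence
\[
1 \ = \ \sum_k\hat{c}_k^{s_1}\sum_{i\in\mathcal{C}_k}d_i^{\sigma_j} \ \geq \ \sum_k\hat{c}_k^{s_1} \ = \ 1,
\]
and since the column weights $\hat{c}_k^{s_1}$ are strictly positive, equality forces $\sigma_j=\tau_k$ for every $k$. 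No differentiation of the pressure function is needed, and the worry about ``wrongly oriented'' maps contributing zero weight does not arise: the sum defining $D_A$ runs over all of $\mathcal{I}$, every map belongs to exactly one column, and every column weight is positive. With this step filled in, your argument is complete and coincides with the paper's proof.
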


We will prove Corollary \ref{intcor2} in Section \ref{proofintcor2}.  Theorems \ref{uppermain}-\ref{lowermain} provide explicit means to estimate, or at least obtain non-trivial bounds for, the Hausdorff dimension and box dimension.  The formulae for the box dimensions given in \cite{lalley-gatz, baranski} are completely explicit, but the formulae for the Hausdorff dimensions are not explicit and are often difficult to evaluate.  As such, our results concerning lower dimension provide completely explicit and easily computable lower bounds for the Hausdorff dimension.  Finally we note that, despite how apparently easy it is to have lower dimension equal to zero, it is easy to see from Theorem \ref{lowermain} that the lower dimension of a self-affine carpet is always strictly positive.

\section{Examples} \label{examples}

In this section we give two examples and compute their Assouad and lower dimensions.  Each example is designed to illustrate an important phenomenon.

\subsection{A self-similar set with overlaps} \label{selfsimexample}

Self-similar sets with overlaps are currently at the forefront of research on fractals and are notoriously difficult to deal with.  For example, a recent paper of Hochman \cite{hochman} has made a major contribution to the famous problem of when a `dimension drop' can occur, in particular, when the Hausdorff dimension of a self-similar subset of the line can be strictly less than the minimum of the similarity dimension and one.  In this section we provide an example of a self-similar set $F \subset [0,1]$ with overlaps for which
\[
\dim_\text{L} F = \dim_\text{H} F = \dim_\text{B} F  <  \dim_\text{A} F.
\]
This answers a question of Olsen \cite[Question 1.3]{olsenassouad} which asked if it was possible to find a graph-directed Moran fractal $F$ with $\dim_\text{B} F  <  \dim_\text{A} F$.  Self-similar sets are the most commonly studied class of graph-directed Moran fractals, see \cite{olsenassouad} for more details.  We also use this example to show that Assouad dimension can increase under Lipschitz maps and, in particular, projections.
\\ \\
Let $\alpha, \beta, \gamma \in (0,1)$ be such that $(\log\beta)/(\log\alpha) \notin \mathbb{Q}$ and define similarity maps $S_1, S_2, S_3$ on $[0,1]$ as follows
\[
S_1(x) = \alpha x, \qquad S_2(x) = \beta x \qquad \text{and} \qquad S_3(x) = \gamma x +(1-\gamma).
\]
Let $F$ be the self-similar attractor of $\{S_1, S_2, S_3\}$.  We will now prove that $\dim_\text{A} F = 1$ and, in particular, the Assouad dimension is independent of $\alpha, \beta, \gamma$ provided they are chosen with the above property.  We will use the following proposition due to Mackay and Tyson, see \cite[Proposition 7.1.5]{mackaytyson}.
\begin{prop}[Mackay-Tyson] \label{weaktang00}
Let $X \subset \mathbb{R}$ be compact and let $F$ be a compact subset of $X$.  Let $T_k$ be a sequence of similarity maps defined on $\mathbb{R}$ and suppose that $T_k(F) \cap X \to_{d_\mathcal{H}} \hat{F}$ for some non-empty compact set $\hat{F} \in \mathcal{K}(X)$.  Then $\dim_\text{\emph{A}} \hat{F}   \leq  \dim_\text{\emph{A}} F$.  The set $\hat{F}$ is called a \emph{weak tangent} to $F$.
\end{prop}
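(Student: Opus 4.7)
The plan is to fix $s > \dim_\text{A} F$ and show that $\hat{F}$ satisfies the Assouad cover estimate for the same exponent $s$ on a uniform scale, with a slightly worse multiplicative constant; letting $s \searrow \dim_\text{A} F$ will then yield $\dim_\text{A} \hat{F} \leq \dim_\text{A} F$. The argument rests on two observations: (i) similarities preserve the Assouad covering estimate exactly (only the upper bound on the valid range of scales is rescaled by the similarity ratio), and (ii) Hausdorff convergence lets one transfer a cover of $T_k(F) \cap X$ to a cover of $\hat{F}$ by slightly enlarging each covering set.

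First I would transfer the Assouad estimate from $F$ to each $T_k(F)$. By hypothesis there exist $C, \rho > 0$ with $N_{r'}(B(y,R') \cap F) \leq C(R'/r')^s$ for all $y \in F$ and $0 < r' < R' \leq \rho$. Writing $\lambda_k$ for the scaling ratio of $T_k$ and applying $T_k$ to an optimal cover, one immediately obtains $N_r(B(z,R) \cap T_k(F)) \leq C(R/r)^s$ for every $z \in T_k(F)$ and every $0 < r < R \leq \lambda_k \rho$. The exponent and constant are preserved; only the valid range of scales is rescaled.

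Next I would carry out the Hausdorff approximation. Fix $x \in \hat{F}$ and $0 < r < R \leq \rho^*$, with $\rho^*$ chosen small enough that $\lambda_k \rho \geq R + 2\epsilon$ for all sufficiently large $k$ (automatic in typical weak-tangent constructions, where $\lambda_k \to \infty$). Take $0 < \epsilon < r/6$, and for $k$ large enough that $d_\mathcal{H}(T_k(F) \cap X, \hat{F}) < \epsilon$, pick $x_k \in T_k(F) \cap X$ within $\epsilon$ of $x$. A short triangle-inequality argument shows that each $y \in B(x,R) \cap \hat{F}$ has a companion $y_k \in T_k(F) \cap X$ with $y_k \in B(x_k, R+2\epsilon)$. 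Covering $B(x_k, R+2\epsilon) \cap T_k(F)$ by at most $C\bigl(3(R+2\epsilon)/r\bigr)^s$ balls of radius $r/3$ via Step 1, then enlarging each to radius $r/2$, produces a cover of $B(x,R) \cap \hat{F}$. Letting $\epsilon \searrow 0$ gives $N_r(B(x,R) \cap \hat{F}) \leq C \cdot 3^s (R/r)^s$, whence $\dim_\text{A} \hat{F} \leq s$, and the proposition follows.

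The only real technical subtlety lies in the bookkeeping of the Hausdorff step: absorbing the $\epsilon$-error both in the location of the base point $x_k$ and in the position of each covering set forces us to shrink the auxiliary covering radius from $r$ to a fixed fraction such as $r/3$, which introduces a multiplicative constant $3^s$ in the final estimate. Since this constant does not affect the exponent, and since $s$ can be taken arbitrarily close to $\dim_\text{A} F$, the loss is harmless. The one point that genuinely requires the weak-tangent structure is verifying that the valid scale range $\lambda_k \rho$ on $T_k(F)$ eventually exceeds our target scale $R$, which is precisely what the $T_k$ being honest zoom-ins ensures.
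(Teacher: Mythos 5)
Your argument matches, in both structure and key ideas, the paper's own proof of its generalization, Proposition~\ref{lowerweaktang}: push the Assouad covering estimate forward through each $T_k$ (a similarity preserves the exponent $s$ and the constant $C$, rescaling only the admissible range of scales by $\lambda_k$), then use Hausdorff proximity to transfer a cover of $T_k(F) \cap X$ to $\hat{F}$ at the cost of a bounded multiplicative factor. The one point you leave informal is the claim that $\lambda_k \rho$ eventually dominates the target scale $R$ because the $T_k$ are ``honest zoom-ins'': the proposition as stated imposes no such condition on the ratios $\lambda_k$, and the paper's Proposition~\ref{lowerweaktang} sidesteps this precisely by building the hypothesis $a_k, b_k \geq 1$ into its statement. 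The gap closes cleanly: if $\liminf_k \lambda_k = 0$, then $\text{diam}(T_k(F) \cap X) \to 0$ along a subsequence, which forces the Hausdorff limit $\hat{F}$ to be a singleton and makes the conclusion trivial; otherwise $\liminf_k \lambda_k > 0$, and one may fix $\rho^*$ strictly below $\rho \cdot \liminf_k \lambda_k$ so that for all large $k$ the rescaled Assouad estimate for $T_k(F)$ holds on the full range $(0, \rho^*]$, uniformly in $r$, $R$, and $\epsilon$. With that patch, and granting (as the paper does elsewhere) that the radius-versus-diameter bookkeeping only shifts the constant and not the exponent, your proof is complete.
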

We will now show that $[0,1]$ is a weak tangent to $F$ in the above sense.  Let $X = [0,1]$ and assume without loss of generality that $\alpha<\beta$.  For each $k \in \mathbb{N}$ let $T_k$ be defined by
\[
T_k(x) = \beta^{-k}x.
\]
We will now show that $T_k(F) \cap [0,1] \to_{d_\mathcal{H}} [0,1]$.  Since
\[
E_k \ := \ \big\{\alpha^m\beta^n: m \in \mathbb{N}, n \in \{-k, \dots, \infty\}\big\} \cap [0,1]  \ \subset \  T_k(F) \cap [0,1]
\]
for each $k$ it suffices to show that $E_k \to_{d_\mathcal{H}} [0,1]$.  Indeed, we have
\begin{eqnarray*}
E_k &\to_{d_\mathcal{H}}& \overline{\bigcup_{k \in \mathbb{N} } E_k} \cap [0,1] \\ \\
&=& \overline{\{\alpha^m\beta^n: m \in \mathbb{N}, n \in \mathbb{Z} \}} \cap [0,1] \\ \\
&=&[0,1].
\end{eqnarray*}
It now follows from Proposition \ref{weaktang00} that $\dim_\text{A} F = 1$.  To see why $ \overline{\{\alpha^m\beta^n: m \in \mathbb{N}, n \in \mathbb{Z} \}} \cap [0,1]=[0,1]$ we apply Dirichlet's Theorem in the following way.  It suffices to show that 
\[
\{m\log\alpha+n\log\beta: m \in \mathbb{N}, n \in \mathbb{Z} \}
\]
is dense in $(-\infty, 0)$.  We have
\[
m\log\alpha+n\log\beta = n\log\alpha \bigg(\frac{m}{n}+ \frac{\log\beta}{\log\alpha}\bigg)
\]
and Dirichlet's Theorem gives that there exists infinitely many $n$ such that
\[
\Big\lvert \frac{m}{n}+ \frac{\log\beta}{\log\alpha} \Big\rvert < 1/n^2
\]
for some $m$.  Since $\log\beta/\log\alpha$ is irrational,  we may choose $m,n$ to make
\[
0<\lvert m\log\alpha+n\log\beta \rvert < \frac{\lvert \log\alpha \lvert }{n}
\]
with $n$ arbitrarily large.  We can thus make $m\log\alpha+n\log\beta$ arbitrarily small and this gives the result.
\\ \\
Clearly we may choose $\alpha, \beta, \gamma$ with the desired properties making the similarity dimension arbitrarily small.  In particular, the similarity dimension is the unique solution, $s$, of
\[
\alpha^s + \beta^s +  \gamma^s=1
\]
and if we choose $\alpha, \beta, \gamma$ such that $s<1$, then it follows from Corollary \ref{quasi2} (7), the above argument, and the fact that the similarity dimension is an upperbound for the upper box dimension of any self-similar set, that
\[
\dim_\text{L} F = \dim_\text{H} F = \dim_\text{B} F \leq s < 1 = \dim_\text{A} F.
\]
We give an example with $s \approx 0.901$ in Figure 3 below.

\begin{figure}[H] 
	\centering
	\includegraphics[width=80mm]{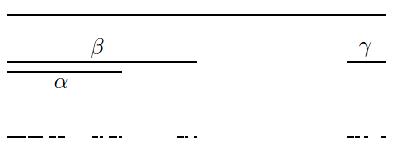}
\caption{The first level iteration and the final attractor for the self-similar set with $\alpha=2^{-\sqrt{3}}$, $\beta = 1/2$ and $\gamma=1/10$.  The tangent structure can be seen emerging around the origin.}
\end{figure}

The construction in this section has another interesting consequence.  Let $\alpha,\beta, \gamma \in (0,1)$ be chosen as before and consider the similarity maps $T_1, T_2, T_3$ on $[0,1]^2$ as follows
\[
T_1(x,y) = (\alpha x, \alpha y), \qquad T_2(x,y) = (\beta x, \beta y) + (0,1-\beta) \qquad \text{and} \qquad T_3(x) = (\gamma x, \gamma y) +(1-\gamma,0)
\]
and let $E$ be the attractor of $\{T_1, T_2, T_3\}$.  Now if $\alpha,\beta, \gamma$ are chosen such that $\alpha+\beta, \beta+\gamma, \alpha+\gamma \leq 1$ and with the similarity dimension $s<1$, then $\{T_1, T_2, T_3\}$ satisfies the open set condition and therefore by Corollary \ref{quasi2} (7) the Assouad dimension of $E$ is equal to $s$ defined above.  However, note that $F$ is the projection of $E$ onto the horizontal axis but $\dim_\text{A} F> \dim_\text{A} E$.  This shows that Assouad dimension can increase under Lipschitz maps.  This is already known, see \cite[Example A.6 2]{luk}, however, our example extends this idea in two directions as we show that the Assouad dimension can increase under Lipschitz maps on \emph{Euclidean space} and under \emph{projections}, which are a very restricted class of Lipschitz maps.

\begin{figure}[H] 
	\centering
	\includegraphics[width=70mm]{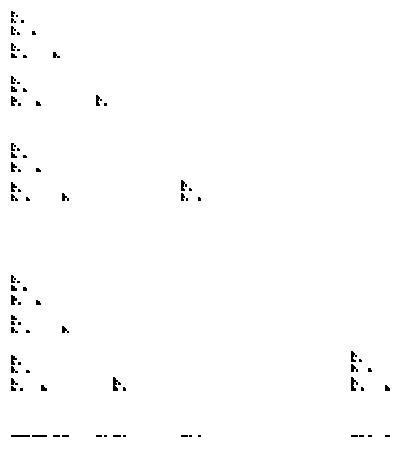}
\caption{The set $E$ and its projection $F$ for $\alpha=2^{-\sqrt{3}}$, $\beta = 1/2$ and $\gamma=1/10$.}
\end{figure}

\subsection{A self-affine carpet in the mixed class} \label{examplesB}

In this section we will give an example of a self-affine carpet in the mixed class for which $\dim_\text{L} F < \dim_\text{H} F = \dim_\text{B} F = \dim_\text{A} F$.  This is not possible in the horizontal or vertical classes by Corollary \ref{intcor1} and thus demonstrates that new phenomena can occur in the mixed class.  In particular, the dichotomy seen in Corollary \ref{intcor1} does not extend to this case.
\\ \\
For this example we will let $\{S_i\}_{i \in \mathcal{I}}$ be an IFS of affine maps corresponding to the shaded rectangles in Figure 1 below.  Here we have divided the unit square horizontally in the ratio $1/5:4/5$ and vertically into four strips each of height $1/4$.

\begin{figure}[H]
	\centering
	\includegraphics[width=120mm]{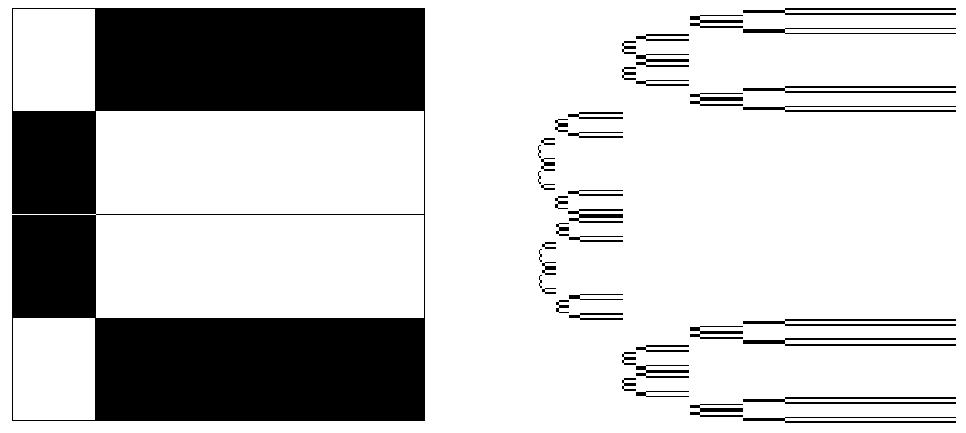}
\caption{The defining pattern for the IFS (left) and the corresponding attractor (right).}
\end{figure}

It is easy to see that
\[
\dim_\text{B} \pi_1(F) \, = \, \dim_\text{B} \pi_2(F) \, = \, 1,
\]
\[
\dim_\text{B}  \text{Slice}_{1,i}(F)  \, = \, 0.5 \qquad \text{and} \qquad \dim_\text{B}  \text{Slice}_{2,i}(F) \, = \, 0
\]
for all $i \in \mathcal{I}$ and therefore by Theorems \ref{uppermain}-\ref{lowermain}, we have $\dim_\text{L} F = 1$ and $\dim_\text{A} F = 1.5$.  Furthermore, the formulae in \cite{baranski} plus a simple calculation gives $\dim_\text{B} F =  \dim_\text{H} F = 1.5$.

\section{Open questions and discussion} \label{questions}

In this section we will briefly outline what we believe are the key questions for the future and discuss some of the interesting points raised by the results in this paper.
\\ \\
There are many natural ways to attempt to generalise our results on the Assouad and lower dimensions of self-affine sets.  Firstly, one could try to compute the dimensions of more general carpets.

\begin{ques}
What is the Assouad dimension and lower dimension of the more general self-affine carpets considered by Feng and Wang \cite{fengaffine} and Fraser \cite{me_box}?
\end{ques}

Whilst the classes of self-affine sets considered in \cite{fengaffine, me_box} are natural generalisations of the Lalley-Gatzouras and Bara\'nski classes, one notable difference is that there is no obvious analogue of \emph{approximate squares}, on which the methods used in this paper heavily rely.  In order to generalise our results one may need to `mimic' approximate squares in a delicate manor or adopt a different approach.  Secondly, one could look at higher dimensional analogues.

\begin{ques}
What is the Assouad dimension and lower dimension of the higher dimensional analogues of the self-affine sets considered here?  In particular, what are the dimensions of the Sierpi\'nski sponges (the higher dimensional analogue of the Bedford-McMullen carpet considered by, for example, Kenyon and Peres \cite{kenyonperes})?.
\end{ques}

Perhaps the most interesting direction for generalisation would be to look at arbitrary self-affine sets in a generic setting.

\begin{ques}
Can we say something about the Assouad dimension and lower dimension of self-affine sets in the generic case in the sense of Falconer \cite{affine}?  
\end{ques}

An interesting consequence of Mackay's results \cite{mackay} and Theorem \ref{uppermain} is that the Assouad dimension is not bounded above by the \emph{affinity dimension}, defined in \cite{affine}.  It is a well-known result of Falconer \cite{affine} in the dimension theory of self-affine sets that the affinity dimension is an upperbound for the upper box dimension of any self-affine set and if one randomises the translates in the defining IFS in a natural manner, then one sees that (provided the Lipschitz constants of the maps are strictly less than 1/2) the Hausdorff dimension is almost surely equal to the affinity dimension.  Are the Assouad and lower dimensions almost surely equal?  If they are, then this almost sure value must indeed be the affinity dimension.  If they are not almost surely equal, then are they at least almost surely equal to two different constants?
\\ \\
In the study of fractals one is often concerned with measures supported on sets rather than sets themselves.  Although their definitions depend only on the structure of the set, the Assouad and lower dimensions have a fascinating link with certain classes of measures.  Luukkainen and Saksman \cite{luksak} (see also \cite{konyagin}) proved that the Assouad dimension of a compact metric space $X$ is the infimum of $s \geq 0$ such that there exists a locally finite measure $\mu$ on $X$ and a constant $c_s > 0$ such that for any $0 < \rho < 1$, $x \in X$ and $r > 0$
\begin{equation} \label{uppermeasure1}
\mu(B(x,r)) \ \leq \ c_s \rho^{-s} \mu(B(x,\rho r)).
\end{equation}
Dually, Bylund and Gudayol \cite{bylund} proved that the lower dimension of a compact metric space $X$ is the supremum of $s \geq 0$ such that there exists a locally finite measure $\mu$ on $X$ and a constant $d_s > 0$ such that for any $0 < \rho < 1$, $x \in X$ and $r > 0$
\begin{equation} \label{lowermeasure1}
\mu(B(x,r)) \ \geq \ d_s \rho^{-s} \mu(B(x,\rho r)).
\end{equation}
As such, our results give the existence of measures supported on self-affine carpets with useful scaling properties.  In particular, if $F$ is a self-affine carpet, then for each $s>\dim_\text{A} F$ there exists a measure supported on $F$ satisfying (\ref{uppermeasure1}) and  for each $s<\dim_\text{L} F$ there exists a measure supported on $F$ satisfying (\ref{lowermeasure1}).  It is natural to ask if `sharp' measures exist.

\begin{ques}
Let $F$ be a self-affine carpet.  Does there exist a measure supported on $F$ satisfying (\ref{uppermeasure1}) for $s=\dim_\text{\emph{A}} F$ and a measure supported on $F$ satisfying (\ref{lowermeasure1}) for $s=\dim_\text{\emph{L}} F$?
\end{ques}

As mentioned above, it is interesting to examine the relationship between the Assouad and lower dimensions and the other dimensions discussed here.  In particular, for a given class of sets one can ask \emph{what relationships are possible between the dimensions?}  For example, for Ahlfors regular sets all the dimension are necessarily equal.  The following table summarises the possible relationships between the Assouad and lower dimensions and the box dimension for the classes of sets we have been most interested in.

\begin{center}
    \begin{tabular}{ | l | c| c| c|}
    \hline
    Configuration & horizontal/vertical class & mixed class & self-similar class\\ \hline 
  $\dim_\text{L} F  \ = \  \dim_\text{B} F \ = \  \dim_\text{A} F$ & possible & possible & possible \\ \hline 
  $\dim_\text{L} F  \ = \  \dim_\text{B} F \ < \  \dim_\text{A} F$ & not possible & not possible& possible \\   \hline
    $\dim_\text{L} F  \ < \  \dim_\text{B} F \ = \  \dim_\text{A} F$ & not possible & possible & not possible \\ \hline  
 $\dim_\text{L} F  \ < \  \dim_\text{B} F \ < \  \dim_\text{A} F$ & possible & possible & not possible \\   \hline
    \end{tabular}
\end{center}

The information presented in this table can be gleaned from Corollary \ref{quasi2}, Corollary \ref{intcor1}, Corollary \ref{intcor2} and the examples in Sections \ref{selfsimexample} and \ref{examplesB}.  Interestingly, the configuration $\dim_\text{L} F  \ < \  \dim_\text{B} F \ = \  \dim_\text{A} F$ is possible for self-affine carpets, but not for self-similar sets (even with overlaps) and the configuration $\dim_\text{L} F  \ = \  \dim_\text{B} F \ < \  \dim_\text{A} F$ is not possible for self-affine carpets, but is possible for self-similar sets with overlaps.  Roughly speaking, the reason for this is that the non-uniform scaling present in self-affine carpets allows one to `spread' the set out making certain places easier to cover and thus making the lower dimension drop and one can use overlaps to `pile' the set up making certain places harder to cover and thus raising the Assouad dimension.  It would be interesting to add Hausdorff dimension to the above analysis, but there are some configurations for which we have been unable to determine if they are possible or not.

\begin{ques}

Are any of the entries marked with a question mark in the following table possible in the relevant class of sets?  The rest of the entries may be gleaned from Corollary \ref{quasi2}, Corollary \ref{intcor1}, Corollary \ref{intcor2} and the examples in Sections \ref{selfsimexample} and \ref{examplesB}.
\vspace{-4mm}
\emph{\begin{center}
    \begin{tabular}{ | l | c| c| c|}
    \hline
   Configuration & horizontal/vertical class & mixed class & self-similar class\\ \hline 
 $\dim_\text{L} F  \ = \  \dim_\text{H} F \ = \  \dim_\text{B} F \ = \  \dim_\text{A} F$ & possible & possible & possible \\ \hline 
 $\dim_\text{L} F  \ = \  \dim_\text{H} F \ = \ \dim_\text{B} F \ < \  \dim_\text{A} F$ & not possible & not possible& possible \\   \hline
 $\dim_\text{L} F  \ = \  \dim_\text{H} F \ < \  \dim_\text{B} F \ = \  \dim_\text{A} F$ & not possible & ? & not possible \\ \hline 
 $\dim_\text{L} F  \ = \  \dim_\text{H} F \ < \ \dim_\text{B} F \ < \  \dim_\text{A} F$ & not possible & ? & not possible \\   \hline
 $\dim_\text{L} F  \ < \  \dim_\text{H} F \ = \   \dim_\text{B} F \ = \  \dim_\text{A} F$ & not possible & possible & not possible \\ \hline
 $\dim_\text{L} F  \ < \ \dim_\text{H} F \ = \  \dim_\text{B} F \ < \  \dim_\text{A} F$ & not possible & ? & not possible \\   \hline
 $\dim_\text{L} F  \ < \  \dim_\text{H} F \ < \   \dim_\text{B} F \ = \  \dim_\text{A} F$ & not possible & ? & not possible \\ \hline
 $\dim_\text{L} F  \ < \ \dim_\text{H} F \ < \  \dim_\text{B} F \ < \  \dim_\text{A} F$ & possible & possible & not possible \\   \hline
    \end{tabular}
\end{center}}

\end{ques}

Finally, it is natural to wonder what precise Baire class the lower dimension belongs to, especially given that we have proved that the Assouad dimension is precisely Baire 2.  We have proved that it is no worse than Baire 3, see Theorem \ref{meas2}, and not Baire 1, so it remains to decide whether the lower dimension is Baire 2, and thus has the same level of complexity as the Assouad dimension and Hausdorff dimension, or is not Baire 2, and thus is more complex than the Assouad dimension and Hausdorff dimension.

\begin{ques}
Is the lower dimension always of Baire class 2?
\end{ques}

\section{Proofs of basic properties} \label{proofs1}

Throughout this section, $X$ and $Y$ will be metric spaces.  

\subsection{Proof of Theorem \ref{lowerassproduct}: products} \label{basicproductproof}

We will prove that $\dim_\text{L} X \, + \, \dim_\text{L} Y \leq \dim_\text{L} (X \times Y)  \leq  \dim_\text{L} X \, + \, \dim_\text{A} Y$.  The proof of the analogous formula for the Assouad dimension of $X \times Y$ is similar and omitted.  Write $M_r(F)$ to denote the maximum cardinality of an $r$-separated subset of a set $F$ where an $r$-separated set is a set where the the distance between any two pairs of points is strictly greater than $r$.  Observe that for all $(x,y) \in X \times Y$ and all $0<r<R$, we have
\begin{equation} \label{Nproduct}
N_r\Big( B\big((x,y),R\big) \cap (X\times Y) \Big) \ \leq \  N_r\big( B(x,R) \cap X \big) \,N_r\big( B(y,R) \cap Y \big)
\end{equation}
and
\begin{equation} \label{Mproduct}
M_r\Big( B\big((x,y),R\big) \cap (X\times Y) \Big) \ \geq \  M_r\big( B(x,R) \cap X \big) \, M_r\big( B(y,R) \cap Y \big).
\end{equation}
The first inequality follows since if $\{U_i\}_i$, $\{V_i\}_i$ are arbitrary $r$-covers of $B(x,R) \cap X$ and  $B(y,R) \cap Y$ respectively, then $\{U_i \times V_j\}_{i,j}$ is an $r$-cover of $B\big((x,y),R\big) \cap (X\times Y)$ and the second inequality follows since if $\{u_i\}_i$, $\{v_i\}_i$ are arbitrary $r$-separated subsets of $B(x,R) \cap X$ and  $B(y,R) \cap Y$ respectively, then $\{(u_i, v_j)\}_{i,j}$ is an $r$-separated subset of $B\big((x,y),R\big) \cap (X\times Y)$.
\\ \\
\emph{Proof of lower bound.}  Let $s<\dim_\text{L} X$ and $t<\dim_\text{L} Y$.  It follows that there exists $C_X, C_Y, \rho_X, \rho_Y$ such that for all $0<r<R<\rho_X$ and $x \in X$ we have
\[
M_r\big( B(x,R) \cap X \big) \ \geq \ C_X \, \bigg(\frac{R}{r} \bigg)^s
\]
and for all $0<r<R<\rho_Y$ and $y \in Y$ we have
\[
M_r\big( B(y,R) \cap Y \big)\  \geq \ C_Y \, \bigg(\frac{R}{r} \bigg)^t.
\]
It now follows from (\ref{Mproduct}) that, for all $0<r<R<\min\{\rho_X, \rho_Y\}$ and all $(x, y) \in X \times Y$, we have
\[
M_r\Big( B\big((x,y),R\big) \cap (X\times Y) \Big) \ \geq \ C_X \, C_Y \, \bigg(\frac{R}{r} \bigg)^{s+t}
\]
which implies that $\dim_\text{L} (X \times Y)  \geq s+t$ which proves the desired lower bound letting $s \nearrow \dim_\text{L} X$ and $t \nearrow \dim_\text{L} Y$. \hfill \qed
\\ \\
\emph{Proof of upper bound.}  Let $C, \rho>0$ and let $s>\dim_\text{L} X$ and $t>\dim_\text{A} Y$.   It follows that there exists $C_Y, \rho_Y$ such that for all $0<r<R<\rho_Y$ and $y \in Y$ we have
\[
N_r\big( B(y,R) \cap Y \big)\  \leq \ C_Y \, \bigg(\frac{R}{r} \bigg)^t
\]
and there exists $0<r_X<R_X< \min\{\rho, \rho_Y\}$ and $x_X \in X$ such that
\[
N_{r_X}\big( B(x,R_X) \cap X \big) \  < \ \frac{C}{C_Y} \, \bigg(\frac{R_X}{r_X} \bigg)^s.
\]
It now follows from (\ref{Nproduct}) that for any $y \in Y$ that
\[
N_{r_X}\Big( B\big((x_X,y),R_X\big) \cap (X\times Y) \Big) \ < \ \frac{C}{C_Y} \, \bigg(\frac{R_X}{r_X} \bigg)^s \, C_Y \, \bigg(\frac{R_X}{r_X} \bigg)^t \ =  \ C \, \bigg(\frac{R_X}{r_X} \bigg)^{s+t}
\]
which implies that $\dim_\text{L} (X \times Y)  \leq s+t$ which proves the desired upper bound letting $s \searrow \dim_\text{L} X$ and $t \searrow \dim_\text{A} Y$. \hfill \qed
\\ \\
Finally we will prove that if $Y=X$, then we can obtain a sharp result for the lower dimension of the product.  In fact, we will prove that $\dim_\text{L} (X^n) \ = \ n\, \dim_\text{L} X$.  The fact that $\dim_\text{L} (X^n) \ \geq \ n\, \dim_\text{L} X$ follows from the above so we will now prove the other direction.

\begin{proof}
Let $C, \rho > 0$ and $s>\dim_\text{L} X$.  It follows that there exists $0<r<R< \rho$ and $x \in X$ such that
\[
N_{r}\big( B(x,R) \cap X \big) \  < \ \sqrt[n]{C} \, \bigg(\frac{R}{r} \bigg)^s.
\]
and by repeatedly applying (\ref{Nproduct}) we obtain
\[
N_{r}\Big( B\big((\underbrace{x, \dots, x}_{n \text{ times}}),R\big) \cap (X^n) \Big) \ < \ ( \sqrt[n]{C} )^n \, \bigg(\frac{R}{r} \bigg)^{ns} \ = \ C\, \bigg(\frac{R}{r} \bigg)^{ns}
\]
which implies that $\dim_\text{L} (X^n)  \leq ns$ which proves the desired upper bound letting $s \searrow \dim_\text{L} X$.
\end{proof}

\subsection{Proof of Theorem \ref{lowerassunion}: unions} \label{basicunionproof}

Let $E,F \subseteq X$.  The inequality $\min \{ \dim_\text{L} E,  \ \dim_\text{L}  F\}  \leq  \dim_\text{L} (E \cup F)$ is trivial since if $x \in E$, then we use the estimate
\[
N_r\big(B(x,R) \cap (E \cup F) \big) \ \geq  \ N_r\big(B(x,R) \cap E \big)
\]
to obtain the desired scaling and if $x \in F$, then we use the estimate
\[
N_r\big(B(x,R) \cap (E \cup F) \big) \ \geq  \ N_r\big(B(x,R) \cap F \big).
\]
We will now prove the other direction.  

\begin{proof}
Fix $C, \rho>0$ and let $t>s>\max \{ \dim_\text{L} E,  \ \dim_\text{A}  F\}$.  Since $s> \dim_\text{A} F$, there exists $C_F, \, \rho_F>0$ such that for all $0<r<R\leq \rho_F$ and all $x \in X$, we have
\begin{equation} \label{upperscaling}
N_r\big( B(x,R) \cap F \big) \ \leq \ C_F \bigg(\frac{R}{r}\bigg)^s \ = \ C_F \bigg(\frac{R}{r}\bigg)^{s-t}  \bigg(\frac{R}{r}\bigg)^t.
\end{equation}
Technically speaking the definition of Assouad dimension only gives that the estimate (\ref{upperscaling}) holds for $x \in F$, however, we will need it to hold for all $x \in X$.  To see why we can assume this, note that the intersection of $F$ with any ball centered in $X$ is either empty or contained in a ball centered in $F$ with double the radius.  Also, since $t> \dim_\text{L} E$, there exists $0<r<R<\min\{\rho, \rho_F\}$ and $x \in E$ such that
\begin{equation} \label{lowerscaling}
N_r\big( B(x,R) \cap E \big) \ < \ \min \bigg\{C/2, \, \Big(\frac{2C_F}{C}\Big)^{t/(s-t)}\bigg\} \,  \bigg(\frac{R}{r}\bigg)^t.
\end{equation}
Observe that
\[
1 \ \leq \ N_r\big( B(x,R) \cap E \big) \ \leq \   \Big(\frac{2C_F}{C}\Big)^{t/(s-t)}  \,  \bigg(\frac{R}{r}\bigg)^t
\]
and so
\begin{equation} \label{thetaest}
\bigg(\frac{R}{r}\bigg)^{s-t} \  \leq \  \frac{C}{2C_F}.
\end{equation}
By (\ref{upperscaling}) and (\ref{lowerscaling}), there exists $0<r<R\leq \rho$ and $x \in E \subseteq  E \cup F$ such that
\begin{eqnarray*}
N_r\big( B(x,R) \cap (E\cup F) \big) & \leq & N_r\big( B(x,R) \cap E \big) \ + \ N_r\big( B(x,R) \cap  F \big) \\ \\
&<& \min \bigg\{C/2, \, \Big(\frac{2C_F}{C}\Big)^{t/(s-t)}\bigg\}  \,  \bigg(\frac{R}{r}\bigg)^t \ + \ C_F \bigg(\frac{R}{r}\bigg)^{s-t}  \bigg(\frac{R}{r}\bigg)^t \\ \\
&\leq& C/2 \,  \bigg(\frac{R}{r}\bigg)^t \ + \ C_F \, \frac{C}{2C_F} \bigg(\frac{R}{r}\bigg)^t \qquad \qquad \text{by (\ref{thetaest})}\\ \\
&=& C \,  \bigg(\frac{R}{r}\bigg)^t
\end{eqnarray*}
which proves that $\dim_\text{L} (E \cup F) \leq \max \{ \dim_\text{L} E,  \ \dim_\text{A}  F\}$.
\end{proof}
Finally, to complete the proof of Theorem \ref{lowerassunion} assume that $E$ and $F$ are such that $\inf_{x \in E, \, y \in F} d(x,y) = \eta >0$.  It follows that any ball centered in $E \cup F$ with radius $R<\eta$ can only intersect one of $E$ and $F$ from which it easily follows that $\dim_\text{L} (E \cup F) = \min \{ \dim_\text{L} E,  \ \dim_\text{L}  F\}$.

\subsection{Proof of Theorem \ref{closurestable2}: closures} \label{basicclosureproof}

Let $F \subseteq X$.  Since lower dimension is not monotone, we have to prove that $\dim_\text{L}  F \leq \dim_\text{L}  \overline{F}$ and $\dim_\text{L}  F \geq \dim_\text{L}  \overline{F}$.  We will prove $\dim_\text{L} F \leq  \dim_\text{L} \overline{F}$ and argue that the other direction follows by a similar argument. 

\begin{proof}
Let $s> \dim_\text{L} \overline{F}$ and fix $C, \rho>0$.  It follows that there exists $\overline{x} \in \overline{F}$ and $r, R>0$ with $0<r<R<\rho$ such that
\begin{equation} \label{closexp1}
N_r\big( B(\overline{x}, R) \cap \overline{F} \big) \ < \ C \, 2^{-s} \,   \bigg(\frac{R}{r} \bigg)^s.
\end{equation}
Let $\varepsilon \in (0, R/2)$ and choose $x \in F \cap B(\overline{x}, \varepsilon)$.  It follows that
\begin{equation} \label{closexp}
B(x, R-\varepsilon) \cap F \ \subseteq \  B(\overline{x}, R) \cap \overline{F},
\end{equation}
and hence
\begin{eqnarray*}
N_r\big( B(x, R-\varepsilon) \cap F  \big)  \ \leq \  N_r\big( B(\overline{x}, R) \cap \overline{F} \, \big) &<& C \,2^{-s}  \,  \bigg(\frac{R}{r} \bigg)^s \qquad  \qquad \text{by (\ref{closexp1})}\\ \\
&\leq& C  \,2^{-s} \, \bigg(\frac{R}{R-\varepsilon} \bigg)^s \, \bigg(\frac{R-\varepsilon}{r} \bigg)^s \\ \\
&\leq& C  \, \bigg(\frac{R-\varepsilon}{r} \bigg)^s,
\end{eqnarray*}
which proves that $\dim_\text{L} F \leq  s$ and letting $s \searrow \dim_\text{L} \overline{F}$ gives the desired estimate.  The proof of the opposite inequality is similar and we only sketch it.  In this case we first choose $x \in F$, then $\overline{x} \in \overline{X} \cap B(x, \varepsilon)$ and obtain
\[
B(\overline{x}, R-\varepsilon) \cap \overline{F} \ \subseteq \  \overline{B(x, R) \cap F}.
\]
Now observe that if $\{U_i\}_i$ is a cover of $B(x, R) \cap F$ by closed balls, then $\{U_i\}_i$ is also a cover of $B(\overline{x}, R-\varepsilon) \cap \overline{F}$ and so using closed balls in the definition of $N_r$ we can complete the proof as above.
\end{proof}

\subsection{Proof of Theorem \ref{bi-Lipschitz}: bi-Lipschitz invariance} \label{basicbi-Lipschitzproof}

Suppose $\phi : X \to Y$ is an onto bi-Lipschitz mapping with Lipschitz constants $a,b >0$ such that
\[
a \lvert x-y\rvert \leq \lvert \phi(x) - \phi(y) \rvert \leq b \lvert x-y \rvert
\]
for $x,y \in X$.  We will prove that $\dim_\text{L} X \geq  \dim_\text{L} Y$ and observe that the other direction follows by the same argument using $\phi^{-1}$.
\begin{proof}
Let $s<\dim_\text{L} Y$.  It follows that there exists $C, \rho>0$ such that for all $0<r$, $0<R<\rho$ and $y \in Y$ we have
\[
N_r\big( B(y,R) \cap Y \big) \ \geq \ C \, \bigg(\frac{R}{r} \bigg)^s.
\]
Since $B(\phi(x), ar) \subseteq \phi\big(B(x, r)\big)$ for all $x \in X$ and $r>0$ and $\text{diam}\big(\phi(U)\big) \leq b \, \text{diam}(U)$ for all sets $U \subseteq X$, it follows that
\[
N_r\big( B(x,R) \cap X \big) \ \geq \ N_{br}\big( B(\phi(x),aR) \cap Y \big) \ \geq \  C \, \bigg(\frac{aR}{br} \bigg)^s \ = \ C \, (a/b)^s \, \bigg(\frac{R}{r} \bigg)^s 
\]
provided $R < \rho/a$ which shows that $\dim_\text{L} X \geq  s$ and letting $s \nearrow \dim_\text{L} Y$ proves the result.
\end{proof}
Note that we use both Lipschitz constants in this proof which is consistent with the fact that maps which are only Lipschitz on one side do not necessarily preserve the dimension in either direction.

\subsection{Proof of Theorems \ref{meas1} and \ref{meas2}: measurability of the Assouad and lower dimensions} \label{basicmeasureproof1}

Throughout this section $(X,d)$ will be a compact metric space, $\overline{B}(x,R)$ will denote the \emph{closed} ball centered at $x \in X$ with radius $R>0$ and $B^0(x,R)$ will denote the \emph{open} ball centered at $x \in X$ with radius $R>0$.  For $x \in X$ and $R>0$ define a map $\overline{\beta}_{x,R}: \mathcal{K}(X)  \to \mathcal{K}_0(X)$ by
\[
\overline{\beta}_{x,R}(F) = \overline{B}(x,R) \cap F
\]
and a map $\beta^0_{x,R}: \mathcal{K}(X)  \to \mathcal{P}(X)$ by
\[
\beta^0_{x,R}(F) = B^0(x,R) \cap F.
\]
Also, $N_r(F)$ will denote the smallest number of \emph{open} sets required for an $r$-cover of $F \subseteq X$ and $M_r(F)$ will denote the maximum number of \emph{closed} sets in an $r$-packing of $F\subseteq X$, where an $r$-packing of $F$ is a pairwise disjoint collection of closed balls centered in $F$ of radius $r$.

\begin{lma} \label{uppersemicontinuity2}
Let $x \in X$ and $R,r>0$.  The map $N_r \circ \overline{\beta}_{x,R} : \mathcal{K}(X) \to \mathbb{R}$ is upper semicontinuous.
\end{lma}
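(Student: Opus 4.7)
The plan is to prove upper semicontinuity by showing that, for every integer $k \geq 0$, the set $\{F \in \mathcal{K}(X) : N_r(\overline{B}(x,R) \cap F) \leq k\}$ is open in $(\mathcal{K}(X), d_\mathcal{H})$. I will fix $F_0 \in \mathcal{K}(X)$, write $k_0 = N_r(\overline{\beta}_{x,R}(F_0))$ and $E_0 = \overline{B}(x,R) \cap F_0$, and produce $\delta > 0$ such that $d_\mathcal{H}(F, F_0) < \delta$ forces $N_r(\overline{B}(x,R) \cap F) \leq k_0$.

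The degenerate case $E_0 = \emptyset$ is handled quickly: since $F_0$ and $\overline{B}(x,R)$ are disjoint compact sets, $d(F_0, \overline{B}(x,R)) > 0$, and any $F$ within half of this distance of $F_0$ in the Hausdorff metric must also avoid $\overline{B}(x,R)$, giving $N_r = 0$. If instead $E_0 \neq \emptyset$, I pick a cover $U_1, \dots, U_{k_0}$ of $E_0$ by open sets of diameter at most $r$ witnessing $N_r(E_0) = k_0$, and set $V = \bigcup_i U_i$. Since $E_0$ is compact and contained in the open set $V$, the quantity $\eta := d(E_0, X \setminus V) > 0$ (taken to be $+\infty$ if $V = X$), so the open $\eta$-neighbourhood $(E_0)_\eta$ is contained in $V$. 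Consequently, any compact subset of $(E_0)_\eta$ admits the cover $U_1, \dots, U_{k_0}$ and thus has $N_r$ at most $k_0$. It then suffices to establish the following claim: there is $\delta > 0$ such that $d_\mathcal{H}(F, F_0) < \delta$ implies $\overline{B}(x,R) \cap F \subset (E_0)_\eta$.

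This last step is the main obstacle, because the map $F \mapsto \overline{B}(x,R) \cap F$ is not Hausdorff-continuous in general: points of $F_0$ lying just outside the closed ball can have counterparts in $F$ that have just entered it, and those counterparts need not be close to $E_0$. I plan to argue by contradiction. If the claim fails, there exist $F_n \to F_0$ and $y_n \in \overline{B}(x,R) \cap F_n$ with $d(y_n, E_0) \geq \eta$. Compactness of $\overline{B}(x,R)$ lets me pass to a subsequence with $y_n \to y \in \overline{B}(x,R)$. Because $d_\mathcal{H}(F_n, F_0) \to 0$, each $y_n$ is within $d_\mathcal{H}(F_n, F_0)$ of some $z_n \in F_0$, so $z_n \to y$, and closedness of $F_0$ forces $y \in F_0$. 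Therefore $y \in \overline{B}(x,R) \cap F_0 = E_0$, contradicting $d(y_n, E_0) \geq \eta$. The resulting $\delta$ then delivers $N_r(\overline{B}(x,R) \cap F) \leq k_0$ for all $F$ sufficiently close to $F_0$, completing the proof.
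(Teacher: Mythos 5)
Your proof is correct and follows the same strategy as the paper's: fix a finite open $r$-cover of $E_0 = \overline{B}(x,R)\cap F_0$ witnessing $N_r(E_0)=k_0$, and show the very same cover still covers $\overline{B}(x,R)\cap F$ once $F$ is Hausdorff-close to $F_0$. The only real difference is in how the $\delta$ is produced: the paper writes it down explicitly as half the distance from $F_0$ to the compact set $\overline{B}(x,R)\setminus\bigcup_i U_i$, whereas you extract it non-constructively via a sequential compactness/contradiction argument and also spell out the degenerate case $E_0=\emptyset$, which the paper leaves implicit.
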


\begin{proof}
It was proved in \cite{mattilamauldin} that the function $N_r$ is upper semicontinuous, however, the function $\overline{\beta}_{x,R}$ is clearly not continuous and so we cannot apply their result directly.  Nevertheless, the proof is similar and straightforward.  Let $F \in \mathcal{K}(X) $ and let $\{U_i\}$ be an open $r$-cover of $B(x,R) \cap F$.  Observe that
\[
\eta \ =  \ \frac{1}{2} \,  \inf_{\substack{y \in F \\ \\
 z \in B(x,R) \setminus ( \cup_i U_i )} } d(y,z)
\]
is strictly positive since $F$ and $B(x,R) \setminus ( \cup_i U_i )$ are compact and non-intersecting.  It follows that if $E \in \mathcal{K}(X) $ is such that $d_\mathcal{H}(E,F)< \eta$, then the sets $\{U_i\}$ form an open $r$-cover of $B(x,R) \cap E$, from which it follows that sets of the form
\[
\{F \in \mathcal{K}(X) :  \big(N_r \circ \overline{\beta}_{x,R}\big)( F)< t \}  \qquad \qquad ( t \in \mathbb{R})
\]
are open which gives upper semicontinuity.
\end{proof}

\begin{lma} \label{lowersemicontinuity}
Let $x \in X$ and $R,r>0$.  The map $M_r \circ \beta^0_{x,R} : \mathcal{K}(X) \to \mathbb{R}$ is lower semicontinuous.
\end{lma}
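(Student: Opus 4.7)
The plan is to mirror the strategy used for Lemma \ref{uppersemicontinuity2} (the upper semicontinuity of $N_r \circ \overline{\beta}_{x,R}$), but exploiting the \emph{strict} inequalities inherent in a packing together with the \emph{openness} of $B^0(x,R)$. The heart of the matter is that any finite $r$-packing of $B^0(x,R)\cap F$ will survive small Hausdorff perturbations of $F$, which is exactly what lower semicontinuity requires.

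Concretely, fix $F\in\mathcal{K}(X)$; since $X$ is compact, $n:=\big(M_r\circ\beta^0_{x,R}\big)(F)$ is a finite non-negative integer, and I may assume $n\geq 1$ (otherwise the inequality is trivial). First I would choose centres $y_1,\dots,y_n\in B^0(x,R)\cap F$ realising the packing, so that $d(y_i,y_j)>2r$ for all $i\neq j$ and $d(x,y_i)<R$ for each $i$. Since only finitely many strict inequalities are involved, the number
\[
\eta \;=\; \min\Bigl\{\,\min_{1\le i\le n}\bigl(R-d(x,y_i)\bigr),\ \tfrac{1}{2}\min_{i\neq j}\bigl(d(y_i,y_j)-2r\bigr)\,\Bigr\}
\]
is strictly positive. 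Next, for any $E\in\mathcal{K}(X)$ with $d_\mathcal{H}(E,F)<\eta$, the definition of the Hausdorff metric supplies points $y'_i\in E$ with $d(y'_i,y_i)<\eta$ for every $i$; two applications of the triangle inequality then give $d(x,y'_i)<R$ and $d(y'_i,y'_j)>2r$, so $\{y'_i\}_{i=1}^n$ is an $r$-packing of $B^0(x,R)\cap E$. Hence $\big(M_r\circ\beta^0_{x,R}\big)(E)\geq n$, i.e.\ the super-level set $\{F\in\mathcal{K}(X):\big(M_r\circ\beta^0_{x,R}\big)(F)\geq n\}$ is open, which yields lower semicontinuity because $M_r$ is integer-valued.

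The only real subtlety is the appeal to the \emph{open} ball $B^0(x,R)$: the centres of the packing must lie \emph{strictly} inside, so that they remain inside under perturbations of size less than $\eta$. This is precisely why the lemma pairs $\beta^0_{x,R}$ with $M_r$ (producing lower semicontinuity), while the preceding lemma pairs the closed-ball version $\overline{\beta}_{x,R}$ with $N_r$ (producing upper semicontinuity); had one used $\overline{B}(x,R)$ here, packing centres lying on the bounding sphere could slip outside under arbitrarily small Hausdorff perturbations and destroy the argument. Beyond this bookkeeping point I do not anticipate any technical obstacle.
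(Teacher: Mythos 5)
Your proof is correct and follows essentially the same strategy as the paper's: fix a near-maximal packing of $B^0(x,R)\cap F$ with centres strictly inside the open ball and strictly $2r$-separated, observe that the slack in these finitely many strict inequalities gives a positive $\eta$, and then note that any $E$ within Hausdorff distance $\eta$ of $F$ inherits a packing of the same cardinality, so super-level sets are open. The only cosmetic difference is your choice of $\eta$: you bound the distance to the complement of $B^0(x,R)$ from below by $R-d(x,y_i)$, whereas the paper uses $\inf_{z\notin B^0(x,R)}d(y_i,z)$ directly; both work.
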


\begin{proof}
It was proved in \cite{mattilamauldin} that the function $M_r$ is lower semicontinuous, however, as before we cannot apply this result directly.  Let $F \in \mathcal{K}(X) $ and let $\{\overline{B}(y_i,r)\}_{i \in \Lambda}$ be an $r$-packing of $B^0(x,R) \cap F$ by closed balls with centres $\{y_i\}$ in $B^0(x,R) \cap F$.  Observe that
\[
\eta \  =  \ \frac{1}{2} \, \min\Big\{ \min_{\substack{ i,j \in \Lambda: \\ i \neq j}} d(y_i,y_j)-2r, \ \min_{\substack{i \in \Lambda\\  z \notin B^0(x,R)}} d(y_i,z)  \Big\}
\]
is strictly positive.  It follows that if $E \in \mathcal{K}(X) $ is such that $d_\mathcal{H}(E,F)< \eta$, then we can find an $r$-packing of $B(x,R) \cap E$ by $\lvert \Lambda \rvert$ closed balls, from which it follows that sets of the form
\[
\{F \in \mathcal{K}(X) :  \big(N_r \circ \overline{\beta}_{x,R}\big)( F)> t \}  \qquad \qquad ( t \in \mathbb{R})
\]
are open which gives lower semicontinuity.
\end{proof}

The following lemma gives a useful equivalent definition of Assouad dimension.

\begin{lma} \label{equivdef}
For a subset $F$ of a compact metric space $X$ we have
\begin{eqnarray*}
\dim_\text{\emph{A}} F \ = \  \inf \Bigg\{ &\alpha& : \text{     there exists constants $C, \, \rho>0$ such that,} \\
&\,& \text{ for all $0<r<R\leq \rho$, we have $\ \sup_{x \in X_0} \, N_r\big( B(x,R) \cap F\big) \ \leq \ C \bigg(\frac{R}{r}\bigg)^\alpha$ } \Bigg\}
\end{eqnarray*}
for any dense subset $X_0$ of $X$.
\end{lma}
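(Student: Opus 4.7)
The plan is to show both directions of the equality by a simple ball-inclusion/approximation argument. Denote by $\dim'_\text{A} F$ the quantity defined on the right-hand side of the lemma (the infimum taken over exponents admitting a bound with supremum over $x \in X_0$). I will show $\dim_\text{A} F \leq \dim'_\text{A} F$ and $\dim_\text{A} F \geq \dim'_\text{A} F$, each time transferring the scaling inequality between the two formulations at the cost of a multiplicative constant and a halving of $\rho$, neither of which affects the infimum over $\alpha$.

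\textbf{Direction $\dim'_\text{A} F \geq \dim_\text{A} F$.} Suppose $\alpha$ is admissible for $\dim_\text{A} F$, so there exist $C, \rho > 0$ with $\sup_{x \in F} N_r(B(x,R) \cap F) \leq C(R/r)^\alpha$ for all $0 < r < R \leq \rho$. Given $x \in X_0$ and $0 < r < R \leq \rho/2$, if $B(x,R) \cap F = \emptyset$ the bound is trivial; otherwise pick any $y \in B(x,R) \cap F$, so that $B(x,R) \cap F \subseteq B(y, 2R) \cap F$ and hence
\[
N_r\big(B(x,R) \cap F\big) \ \leq \ N_r\big(B(y, 2R) \cap F\big) \ \leq \ C\Big(\frac{2R}{r}\Big)^\alpha \ = \ C\, 2^\alpha \Big(\frac{R}{r}\Big)^\alpha.
\]
Thus $\alpha$ is admissible for $\dim'_\text{A} F$ as well (with constant $C \, 2^\alpha$ and scale $\rho/2$), giving the desired inequality.

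\textbf{Direction $\dim'_\text{A} F \leq \dim_\text{A} F$.} Now suppose $\alpha$ is admissible for $\dim'_\text{A} F$, with constants $C, \rho$. Given $x \in F$ and $0 < r < R < \rho$, choose any $\varepsilon \in (0, \rho - R)$. By density of $X_0$ in $X$ there exists $y \in X_0$ with $d(x,y) < \varepsilon$, and then $B(x,R) \subseteq B(y, R+\varepsilon)$, so
\[
N_r\big(B(x,R) \cap F\big) \ \leq \ N_r\big(B(y, R+\varepsilon) \cap F\big) \ \leq \ C\Big(\frac{R+\varepsilon}{r}\Big)^\alpha.
\]
Since the left-hand side is independent of $\varepsilon$, letting $\varepsilon \searrow 0$ gives the original $F$-bound with the same constants. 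Hence $\alpha$ is admissible for $\dim_\text{A} F$.

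Both inequalities together give equality of the two infima. There is no genuine obstacle here; the only points requiring a moment of care are the empty-intersection case in the first direction and the need to take a limit in $\varepsilon$ (rather than a single choice) in the second, to avoid degrading the constant by an arbitrary factor. The argument clearly extends to give an analogous equivalent definition of the lower dimension, though this is not needed for the present lemma.
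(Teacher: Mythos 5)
Your two arguments are correct and, taken together, establish the lemma by the same elementary ball-inclusion/density argument the paper sketches; but note that your direction headers are swapped. In the block you label ``Direction $\dim'_\text{A} F \geq \dim_\text{A} F$'' you start from an $\alpha$ admissible for $\dim_\text{A} F$ and deduce it is admissible for $\dim'_\text{A} F$, so the admissible set for $\dim_\text{A}$ is contained in that for $\dim'_\text{A}$, which gives the infimum inequality $\dim'_\text{A} F \leq \dim_\text{A} F$, not $\geq$. Symmetrically, the block labeled ``$\leq$'' is the one that actually proves $\geq$. Since you do prove both containments, the conclusion stands; just swap the two headers. The content otherwise matches the paper's proof: the passage from $X_0$-centred balls to $F$-centred balls of double radius handles one inequality, and density plus the $\varepsilon \searrow 0$ limit handles the other, with your treatment of the $B(x,R) \cap F = \emptyset$ case a sensible small addition the paper leaves implicit.
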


\begin{proof}
Let $X_0$ be a dense subset of a compact metric space $X$ and let $\dim^{X_0}_\text{A} F$ be the definition given on the right hand side of the equation above (which appears to depend on $X_0$).  The fact that $\dim^{X_0}_\text{A} F \geq \dim_\text{A} F$ follows immediately since any ball centered in $F$ with radius $r$ contains, and is contained in, a ball centered in $X_0$ with radius arbitrarily close to $r$.  The opposite inequality follows from the fact that the intersection of $F$ with any ball centered in $X_0$ is contained in a ball with double the radius, centered in $F$.
\end{proof}

We remark here that there does not exist a similar alternative definition for lower dimension.  The reason for this is that (as long as $F$ is not dense) one can place a ball, $B(x,R)$, completely outside the set $F$, causing $\inf_{x \in X_0} \, N_r\big( B(x,R) \cap F\big)$ to be equal to zero for sufficiently small $R$.

\begin{lma} \label{borelset1}
For all $t \in \mathbb{R}$, the set
\[
\{F \in \mathcal{K}(X) :  \dim_\text{\emph{A}} F< t \}
\]
is $\mathcal{G}_{\delta \sigma}$.
\end{lma}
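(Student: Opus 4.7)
The plan is to unpack the definition of Assouad dimension in a way that exposes the target set as a countable union of countable intersections of open sets in $\mathcal{K}(X)$. The two main ingredients are already available: Lemma \ref{equivdef} lets us restrict the supremum over $x$ to a dense subset, and Lemma \ref{uppersemicontinuity2} gives upper semicontinuity of $N_r \circ \overline{\beta}_{x,R}$ on $\mathcal{K}(X)$. Since $X$ is a compact metric space it is separable, so I can fix a countable dense subset $X_0 \subseteq X$ to use in Lemma \ref{equivdef}.

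The main algebraic step is to rewrite the inequality $\dim_{\text{A}} F < t$ in a countable form. I would argue that $\dim_{\text{A}} F < t$ if and only if there exist rationals $\alpha \in \mathbb{Q} \cap (0,t)$, $C \in \mathbb{Q}_+$, $\rho \in \mathbb{Q}_+$ such that
\[
N_r\big( B(x,R) \cap F \big) \ \leq \ C\bigg(\frac{R}{r}\bigg)^\alpha
\]
for every $x \in X_0$ and every pair of rationals $0 < r < R < \rho$. Passing from arbitrary $(\alpha, C, \rho)$ to rational parameters is immediate from monotonicity of the defining inequality in $C$ and $\rho$. Passing from arbitrary real $(r,R)$ to rational $(r,R)$ uses the monotonicity of $N_r(B(x,R)\cap F)$ (non-increasing in $r$, non-decreasing in $R$): given any real $0 < r < R \leq \rho$, sandwiching with rational $r' < r$ and $R' > R$ (with $R' < \rho$, arranged by shrinking $\rho$ slightly) yields
\[
N_r\big( B(x,R) \cap F \big) \ \leq \ N_{r'}\big( B(x,R') \cap F \big) \ \leq \ C\bigg(\frac{R'}{r'}\bigg)^\alpha,
\]
and then letting $r' \uparrow r$, $R' \downarrow R$ along rationals recovers the desired bound.

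With this reformulation in hand, I decompose
\[
\{F \in \mathcal{K}(X) : \dim_{\text{A}} F < t\} \ = \ \bigcup_{\substack{\alpha, C, \rho \in \mathbb{Q}_+ \\ \alpha < t}} \ \bigcap_{\substack{x \in X_0 \\ r,R \in \mathbb{Q}_+ \\ 0 < r < R < \rho}} \Big\{F : \big(N_r \circ \overline{\beta}_{x,R}\big)(F) \leq C\big(R/r\big)^\alpha \Big\}.
\]
By Lemma \ref{uppersemicontinuity2}, the function $N_r \circ \overline{\beta}_{x,R}$ is upper semicontinuous, and since it is integer-valued, every sublevel set $\{f \leq M\}$ coincides with $\{f < \lfloor M \rfloor + 1\}$ and is therefore open. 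The inner intersection is countable (three countable index sets), hence is a $\mathcal{G}_\delta$ set, and the outer union is countable, producing a $\mathcal{G}_{\delta\sigma}$ set.

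The only real obstacle is the rational-parameter reduction: one must verify carefully that the monotonicity sandwiching argument works uniformly, particularly handling the boundary case $R = \rho$ by replacing $\rho$ with a slightly smaller rational without weakening the overall existence statement. Once this bookkeeping is done, the $\mathcal{G}_{\delta\sigma}$ classification follows immediately from the semicontinuity lemma; no further ideas are required.
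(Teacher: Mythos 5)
Your proposal is correct and follows essentially the same approach as the paper's proof: both restrict the supremum to a countable dense subset via Lemma \ref{equivdef}, decompose the target set as a countable union over parameters $(\alpha,C,\rho)$ of countable intersections over $(x,r,R)$, and appeal to the upper semicontinuity of $N_r\circ\overline{\beta}_{x,R}$ from Lemma \ref{uppersemicontinuity2}. The only cosmetic difference is that the paper uses the strict inequality $N_r(\overline{\beta}_{x,R}(F))<C(R/r)^{t-1/n}$ so the pullbacks are open directly, whereas you use a non-strict inequality and then convert to a strict one via the integer-valuedness of $N_r$; both are valid.
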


\begin{proof}

Let $X_0$ be a countable dense subset of $X$, which exists because $X$ is compact and thus separable, and let $t \in \mathbb{R}$.  Using Lemma \ref{equivdef}, we have
\begin{eqnarray*}
\{F \in \mathcal{K}(X) :  \dim_\text{A} F<t \} \hspace{-30mm} \\ \\
&=& \Bigg\{F \in \mathcal{K}(X) : \text{for some $n \in \mathbb{N}$, there exists } C, \rho>0 \text{ such that for all } x \in X_0 \\ \\
&\,& \qquad \qquad \qquad  \text{ and all } 0<r<R< \rho, \text{ we have } \quad N_r \Big( \overline{B}\big( x, R \big) \cap F \Big)  \ < \ C \, \bigg( \frac{R}{r} \bigg)^{t-1/n} \Bigg\} \\ \\
&=&  \bigcup_{n \in \mathbb{N}} \  \bigcup_{C \in \mathbb{N}} \  \bigcup_{\rho \in \mathbb{Q}^+} \ \bigcap_{x \in X_0} \  \bigcap_{R \in \mathbb{Q} \cap (0,\rho)} \ \bigcap_{r \in \mathbb{Q} \cap (0,R)} \ \Bigg\{ F \in \mathcal{K}(X) :  N_r \big( \overline{\beta}_{x,R}(F)  \big)  \ < \ C \, \bigg( \frac{R}{r} \bigg)^{t-1/n} \Bigg\} \\ \\
&=&  \bigcup_{n \in \mathbb{N}} \  \bigcup_{C \in \mathbb{N}} \  \bigcup_{\rho \in \mathbb{Q}^+} \ \bigcap_{x \in X_0} \  \bigcap_{R \in \mathbb{Q} \cap (0,\rho)} \ \bigcap_{r \in \mathbb{Q} \cap (0,R)} \ \big(N_r \circ \overline{\beta}_{x,R}\big)^{-1}\,  \Big( \big( -\infty,  \, C \, (R/r)^{t-1/n} \big) \Big).
\end{eqnarray*}
The set $\big(N_r \circ \overline{\beta}_{x,R}\big)^{-1}\,  \Big( \big( -\infty,  \, C \, (R/r)^{t-1/n} \big) \Big)$ is open by the upper semicontinuity of $N_r \circ \overline{\beta}_{x,R}$, see Lemma \ref{uppersemicontinuity2}.  It follows that $\{F \in \mathcal{K}(X) :  \dim_\text{A} F < t\}$ is a $\mathcal{G}_{\delta \sigma}$ subset of $\mathcal{K}(X)$.
\end{proof}

\begin{lma} \label{borelset11}
For all $t \in \mathbb{R}$, the set
\[
\{F \in \mathcal{K}(X) :  \dim_\text{\emph{A}} F > t \}
\]
is $\mathcal{G}_{\delta \sigma}$.
\end{lma}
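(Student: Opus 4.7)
The plan is to mirror the proof of Lemma \ref{borelset1}, but to dualise the key input by passing from the upper semicontinuous function $N_r \circ \overline{\beta}_{x,R}$ to the lower semicontinuous function $M_r \circ \beta^0_{x,R}$ supplied by Lemma \ref{lowersemicontinuity}. Since $M_r \circ \beta^0_{x,R}$ is integer-valued and lower semicontinuous, the superlevel sets $\{F \in \mathcal{K}(X) : M_r(B^0(x,R) \cap F) > c\}$ are \emph{open} in $\mathcal{K}(X)$, which provides precisely the open building blocks needed to produce a $\mathcal{G}_{\delta \sigma}$ set.

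First I would establish, for any countable dense $X_0 \subseteq X$, the equivalent characterisation
\[
\dim_\text{\emph{A}} F \ = \ \inf\bigl\{\alpha : \exists\, C, \rho > 0 \text{ s.t. } M_r\bigl(B^0(x,R) \cap F\bigr) \leq C(R/r)^\alpha \text{ for all } 0 < r < R \leq \rho \text{ and } x \in X_0\bigr\}.
\]
This is obtained by combining the remarks in Section \ref{assouadintro} --- that $N_r$ may be replaced by any standard packing function and balls may be taken open or closed --- with the dense-subset argument of Lemma \ref{equivdef}; each substitution merely multiplies $C$ by a bounded constant. Because the set of admissible exponents above is an up-interval with infimum $\dim_\text{\emph{A}} F$, the condition $\dim_\text{\emph{A}} F > t$ is equivalent to the existence of some $n \in \mathbb{N}$ for which $t + 1/n$ is not admissible; unwinding this yields
\[
\{F : \dim_\text{\emph{A}} F > t\}\ =\ \bigcup_{n \in \mathbb{N}}\ \bigcap_{C \in \mathbb{N}}\ \bigcap_{\rho \in \mathbb{Q}^+}\ \bigcup_{x \in X_0}\ \bigcup_{R \in \mathbb{Q} \cap (0,\rho)}\ \bigcup_{r \in \mathbb{Q} \cap (0,R)}\ \bigl(M_r \circ \beta^0_{x,R}\bigr)^{-1}\bigl((C(R/r)^{t+1/n}, \infty)\bigr).
\]

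Each innermost set is open by Lemma \ref{lowersemicontinuity}, the countable unions over $x, R, r$ remain open, the countable intersection over $C, \rho$ is therefore $\mathcal{G}_\delta$, and the outer countable union over $n$ produces the desired $\mathcal{G}_{\delta \sigma}$ set. The step requiring real care is the equivalent characterisation: one must verify that swapping $N_r$ for $M_r$, closed balls for open balls, and the centre $x$ from $F$ to $X_0$ preserves the Assouad dimension cleanly enough for the `$\exists n$' dichotomy to transfer. Passing to this lower semicontinuous, open-ball, packing-function formulation is essential, since a direct $N_r$-based approach only yields an $\mathcal{F}_{\sigma \delta \sigma}$ expression, which sits strictly higher in the Borel hierarchy than $\mathcal{G}_{\delta \sigma}$.
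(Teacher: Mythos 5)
Your proof is correct and takes essentially the same route as the paper: both proofs hinge on dualising $N_r\circ\overline{\beta}_{x,R}$ to the lower semicontinuous map $M_r\circ\beta^0_{x,R}$ of Lemma~\ref{lowersemicontinuity}, so that the \emph{superlevel} sets are open and the ``$\exists\, n\ \forall\, C,\rho\ \exists\, x, R, r$'' quantifier string unwinds directly to a $\mathcal{G}_{\delta\sigma}$ expression. The one cosmetic difference is that you pass to a countable dense set $X_0$, whereas the paper simply takes the union over all $x\in X$ and invokes the fact that an arbitrary union of open sets is open; both are valid, and yours has the small advantage of making the expression literally countable at every stage (at the cost of having to check the dense-subset, open-ball, packing-number form of the Assouad definition). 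Your closing remark about a direct $N_r$-based route only yielding $\mathcal{F}_{\sigma\delta\sigma}$ (i.e.\ $\Sigma^0_4$ rather than $\Sigma^0_3$) is accurate and explains well why the packing/open-ball dualisation is the crucial step.
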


\begin{proof}

Let $t \in \mathbb{R}$.  We have
\begin{eqnarray*}
\{F \in \mathcal{K}(X) :  \dim_\text{A} F>t \} \hspace{-30mm} \\ \\
&=& \Bigg\{F \in \mathcal{K}(X) : \text{there exists $n \in \mathbb{N}$ such that for all } C, \rho>0 \text{ there exists } x \in X \\ \\
&\,& \qquad \qquad \qquad  \text{ and } 0<r<R< \rho, \text{ such that } \quad M_r \Big( B^0\big( x, R \big) \cap F \Big)  \ > \ C \, \bigg( \frac{R}{r} \bigg)^{t+1/n} \Bigg\} \\ \\
&=&  \bigcup_{n \in \mathbb{N}} \  \bigcap_{C \in \mathbb{N}} \  \bigcap_{\rho \in \mathbb{Q}^+} \ \bigcup_{x \in X} \  \bigcup_{R \in \mathbb{Q} \cap (0,\rho)} \ \bigcup_{r \in \mathbb{Q} \cap (0,R)} \ \Bigg\{ F \in \mathcal{K}(X) :  M_r \big( \beta^0_{x,R}(F)  \big)  \ > \ C \, \bigg( \frac{R}{r} \bigg)^{t+1/n} \Bigg\} \\ \\
&=&  \bigcup_{n \in \mathbb{N}} \  \bigcap_{C \in \mathbb{N}} \  \bigcap_{\rho \in \mathbb{Q}^+} \ \bigcup_{x \in X} \  \bigcup_{R \in \mathbb{Q} \cap (0,\rho)} \ \bigcup_{r \in \mathbb{Q} \cap (0,R)} \ \big(M_r \circ \beta^0_{x,R}\big)^{-1}\,  \Big( \big(  C \, (R/r)^{t+1/n} , \infty \big) \Big).
\end{eqnarray*}
The set $\big(M_r \circ \beta^0_{x,R}\big)^{-1}\,  \Big( \big(  C \, (R/r)^{t+1/n}, \infty \big) \Big)$ is open by the lower semicontinuity of $M_r \circ \beta^0_{x,R}$, see Lemma \ref{lowersemicontinuity}.  It follows that $\{F \in \mathcal{K}(X) :  \dim_\text{A} F > t\}$ is a $\mathcal{G}_{\delta \sigma}$ subset of $\mathcal{K}(X)$.
\end{proof}

Theorem \ref{meas1} now follows easily.
\begin{proof}
To show that $\Delta_\text{A}$ is Baire 2, it suffices to show that all open sets of the form $(t, u)$, for $t,u \in \mathbb{R}$ with $t<u$, are pulled back to $\mathcal{G}_{\delta \sigma}$ sets, see \cite[Theorem 24.3]{kechris}.  For such $t,u$, we have
\begin{eqnarray*}
\Delta_\text{A}^{-1} \big( (t,u) \big) &=& \{F \in \mathcal{K}(X) :  \dim_\text{A} F > t \} \ \bigcap \  \{F \in \mathcal{K}(X) :  \dim_\text{A} F < u \}
\end{eqnarray*}
and it follows from Lemmas \ref{borelset1} and \ref{borelset11} that this set is $\mathcal{G}_{\delta \sigma}$.
\end{proof}

We will now turn to the proof of Theorem \ref{meas2}.  One important difference is that we do not have an analogue of Lemma \ref{equivdef} for lower dimension.  To get round this problem, instead of dealing with the continuity properties of the functions $N_r \circ \overline{\beta}_{x,R}$ and $M_r \circ \beta^0_{x,R}$ we must deal with the more complicated function $\Phi_{r,R}$ defined as follows. For $R,r>0$, let $\Phi_{r,R}:\mathcal{K}(X) \to \mathbb{R}$ be defined by
\[
\Phi_{r,R}(F) = \inf_{x \in F} N_r \Big( \overline{B}\big( x, R \big) \cap F \Big).
\]

\begin{lma} \label{uppersemicontinuity3}
Let $R,r>0$.  The map $\Phi_{r,R}$ is upper semicontinuous.
\end{lma}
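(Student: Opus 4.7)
The plan is to prove upper semicontinuity by showing that the preimage $\Phi_{r,R}^{-1}((-\infty, t))$ is open in $(\mathcal{K}(X), d_\mathcal{H})$ for every $t \in \mathbb{R}$, following the same general template as the proof of Lemma~\ref{uppersemicontinuity2} for $N_r \circ \overline{\beta}_{x,R}$. The new wrinkle is that in $\Phi_{r,R}(F) = \inf_{x \in F} N_r\bigl(\overline{B}(x,R) \cap F\bigr)$ the witness centre $x$ now ranges over the variable set $F$ itself, so the perturbation argument needs to supply not just a new cover but also a new centre point.

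Fix $F_0 \in \mathcal{K}(X)$ with $\Phi_{r,R}(F_0) < t$; the goal is to exhibit $\eta > 0$ such that $d_\mathcal{H}(F, F_0) < \eta$ forces $\Phi_{r,R}(F) < t$. Unpacking the definition produces some $x_0 \in F_0$ together with an open cover $\{U_i\}_{i=1}^{n}$ of $\overline{B}(x_0, R) \cap F_0$ by sets of diameter at most $r$, with $n < t$. Since $\overline{B}(x_0, R) \cap F_0$ is compact and $\bigcup_i U_i$ is open, a Lebesgue-number style argument yields $\varepsilon > 0$ such that the entire $\varepsilon$-neighbourhood of $\overline{B}(x_0, R) \cap F_0$ lies inside $\bigcup_i U_i$. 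Given $F$ with $d_\mathcal{H}(F, F_0) < \eta$, Hausdorff closeness supplies $x \in F$ with $d(x, x_0) < \eta$, and $\Phi_{r,R}(F) < t$ will follow as soon as one establishes the inclusion $\overline{B}(x, R) \cap F \subseteq \bigcup_i U_i$, since that gives $N_r\bigl(\overline{B}(x,R) \cap F\bigr) \leq n < t$.

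The main obstacle is proving that inclusion. For $y \in \overline{B}(x, R) \cap F$ Hausdorff closeness yields $y_0 \in F_0$ with $d(y, y_0) < \eta$, and the triangle inequality places $y_0$ in $\overline{B}(x_0, R + 2\eta)$ but not necessarily in $\overline{B}(x_0, R)$ itself; if $y_0$ happens to lie in the thin annular shell just outside the ball then $y$ is not automatically within $\varepsilon$ of $\overline{B}(x_0, R) \cap F_0$. Resolving this boundary behaviour, by tightening $\eta$ small compared with $\varepsilon$ and using compactness of $F_0$ together with the openness of the $U_i$ to show that any stray $y_0$ is still within $\varepsilon$ of $\overline{B}(x_0, R) \cap F_0$, is the delicate point and where the argument goes beyond Lemma~\ref{uppersemicontinuity2}. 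Once that boundary analysis is in place, the earlier neighbourhood inclusion finishes the proof and shows that $\Phi_{r,R}^{-1}((-\infty, t))$ is open, as required.
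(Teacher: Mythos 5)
Your framework is sound and is in fact more careful than the paper's own proof, which simply states that the result ``follows easily'' from Lemma~\ref{uppersemicontinuity2} without addressing the subtlety that in $\Phi_{r,R}(F) = \inf_{x \in F} N_r\bigl(\overline{B}(x,R) \cap F\bigr)$ the centre ranges over the variable set $F$. You correctly isolate the crux: for $y \in \overline{B}(x,R) \cap F$ the nearby $y_0 \in F_0$ a priori lies only in $\overline{B}(x_0, R+2\eta) \cap F_0$, and it is not automatic that such a $y_0$ is within $\varepsilon$ of $\overline{B}(x_0,R) \cap F_0$. However, you then assert that this can be fixed by ``tightening $\eta$ small compared with $\varepsilon$'' without supplying the argument, and this is exactly the step on which the proof turns; as written, your proposal has a genuine gap.

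The missing ingredient is the claim that for every $\varepsilon > 0$ there exists $\delta > 0$ such that $\overline{B}(x_0, R+\delta) \cap F_0 \subseteq \bigl(\overline{B}(x_0,R) \cap F_0\bigr)_\varepsilon$, which does not follow merely from shrinking $\eta$ but is a separate statement about how $F_0$ sits near the sphere $\{z : d(z,x_0)=R\}$. It is true, by sequential compactness: if $z_n \in \overline{B}(x_0, R + 1/n) \cap F_0$ satisfied $d\bigl(z_n, \overline{B}(x_0,R) \cap F_0\bigr) \geq \varepsilon$ for all $n$, a subsequential limit $z$ would lie in $F_0$ with $d(z,x_0) \leq R$, hence in $\overline{B}(x_0,R) \cap F_0$, contradicting $d(z_n,z) \geq \varepsilon$. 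Granting this, pick $\delta$ small enough that $\overline{B}(x_0, R+\delta) \cap F_0 \subseteq \bigcup_i U_i$, and then take $\eta$ smaller than both $\delta/3$ and half the distance from $F_0$ to the compact set $\overline{B}(x_0, R+\delta) \setminus \bigcup_i U_i$; the inclusion $\overline{B}(x,R) \cap F \subseteq \bigcup_i U_i$ (and hence $\Phi_{r,R}(F) < t$) then follows exactly as in the paper's proof of Lemma~\ref{uppersemicontinuity2}. Until this compactness step is written out, the argument is incomplete.
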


\begin{proof}
This follows easily from Lemma \ref{uppersemicontinuity2}, observing that if $F \in \mathcal{K}(X)$ is such that $\Phi_{r,R} (F) < t$ for some $t \in \mathbb{R}$, then there must exist a point $x \in F$ such that $N_r \circ \overline{\beta}_{x,R}(F) <t$.  We can then simply apply the upper semincontinuity of $N_r \circ \overline{\beta}_{x,R}$ to complete the proof.
\end{proof}

\begin{lma} \label{borelset2}
For all $t \in \mathbb{R}$, the set
\[
\{F \in \mathcal{K}(X) : \dim_\text{\emph{L}} F<t \}
\]
is $\mathcal{G}_{\delta \sigma}$.
\end{lma}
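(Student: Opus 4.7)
The plan is to follow the pattern of the proof of Lemma \ref{borelset1} essentially verbatim, with the one essential modification that, for lower dimension, one cannot move the infimum over $x \in F$ onto a fixed countable dense subset of $X$. This is precisely the difficulty flagged in the remark following Lemma \ref{equivdef}, and it is the reason the auxiliary function $\Phi_{r,R}$ and its upper semicontinuity (Lemma \ref{uppersemicontinuity3}) were built up in the preceding lemmas.

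First I would unpack $\dim_\text{L} F < t$ as a countable condition. Since $\dim_\text{L} F$ is defined as a supremum, the inequality $\dim_\text{L} F < t$ holds if and only if there is some $n \in \mathbb{N}$ for which the exponent $t - 1/n$ already fails the defining scaling estimate; that is, for every $C, \rho > 0$ there exist $0 < r < R \leq \rho$ and some $x \in F$ satisfying $N_r(\overline{B}(x,R) \cap F) < C(R/r)^{t-1/n}$. By the very definition of infimum, the existence of such an $x$ is just the strict inequality $\Phi_{r,R}(F) < C(R/r)^{t-1/n}$.

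Next I would discretise the quantifiers. The two universal quantifiers ``$\forall C > 0$'' and ``$\forall \rho > 0$'' may be restricted to $C = 1/k$ with $k \in \mathbb{N}$ and $\rho = 1/m$ with $m \in \mathbb{N}$, since smaller $C$ and smaller $\rho$ each give strictly stronger conditions, and the witnesses $r, R$ can be taken in $\mathbb{Q}$ (the strict inequality is preserved under small rational perturbations because its left-hand side is integer-valued while its right-hand side is continuous in $r, R$). This yields
\[
\{F \in \mathcal{K}(X) : \dim_\text{L} F < t\} \ = \ \bigcup_{n \in \mathbb{N}} \ \bigcap_{k \in \mathbb{N}} \ \bigcap_{m \in \mathbb{N}} \ \bigcup_{R \in \mathbb{Q} \cap (0, 1/m)} \ \bigcup_{r \in \mathbb{Q} \cap (0, R)} \ \Phi_{r,R}^{-1}\Big( \big( -\infty, \, \tfrac{1}{k} (R/r)^{t-1/n} \big) \Big).
\]
By Lemma \ref{uppersemicontinuity3} each innermost preimage is open in $\mathcal{K}(X)$, so the two inner unions over $r, R$ are open, the two subsequent intersections over $k, m$ yield a $\mathcal{G}_\delta$ set, and the final union over $n$ yields a $\mathcal{G}_{\delta \sigma}$ set, as required.

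The only real conceptual obstacle compared with the Assouad case is the one already mentioned: the infimum over $x \in F$ cannot be replaced by an infimum over a fixed countable dense $X_0 \subseteq X$, since a ball centred outside $F$ could make $N_r(\overline{B}(x,R) \cap F)$ artificially small (indeed zero) and destroy the inf. The function $\Phi_{r,R}$ is purpose-built to absorb this set-dependent infimum, and Lemma \ref{uppersemicontinuity3} delivers exactly the semicontinuity in $F$ that the argument demands; modulo this point, the proof is a direct mirror of the one for the Assouad dimension.
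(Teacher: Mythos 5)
Your proof is correct and follows essentially the same route as the paper's: the same reduction to the upper semicontinuous auxiliary function $\Phi_{r,R}$ from Lemma \ref{uppersemicontinuity3}, the same countable discretisation of quantifiers, and the same union--intersection--union structure yielding a $\mathcal{G}_{\delta\sigma}$ set, together with the same observation that the infimum over $x \in F$ cannot be pushed onto a fixed countable dense $X_0$. Your discretisation of the universal quantifier over $C$ via $C = 1/k$ is in fact the careful choice here, since for the lower dimension it is small $C$ that is the demanding direction, whereas the paper writes $\bigcap_{C \in \mathbb{N}}$ (as in Lemma \ref{borelset11}, where large $C$ is demanding); this does not affect the Borel complexity but your version is the cleaner reading.
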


\begin{proof}
Let $t \in \mathbb{R}$.  We have
\begin{eqnarray*}
\{F \in \mathcal{K}(X) :  \dim_\text{L} F<t \} \hspace{-30mm} \\ \\
&=& \Bigg\{F \in \mathcal{K}(X) : \text{ there exists $n \in \mathbb{N}$ such that for all } C, \rho>0 
\text{ there exists } 0<r<R< \rho \\ \\
&\,& \qquad \qquad \qquad  \text{ and } x \in F, \text{ such that } \quad N_r \Big( \overline{B} \big( x, R \big) \cap F \Big)  \ < \ C \, \bigg( \frac{R}{r} \bigg)^{t-1/n} \Bigg\} \\ \\
&=& \bigcup_{n \in \mathbb{N}} \ \bigcap_{C \in \mathbb{N}} \  \bigcap_{\rho \in \mathbb{Q}^+} \   \bigcup_{R \in \mathbb{Q} \cap (0,\rho)} \ \bigcup_{r \in \mathbb{Q} \cap (0,R)} \ \Bigg\{ F \in \mathcal{K}(X) :  \inf_{x \in F} N_r \Big( \overline{B} \big( x, R \big) \cap F \Big)  \ < \ C \, \bigg( \frac{R}{r} \bigg)^{t-1/n} \Bigg\} \\ \\
&=& \bigcup_{n \in \mathbb{N}} \ \bigcap_{C \in \mathbb{N}} \  \bigcap_{\rho \in \mathbb{Q}^+} \   \bigcup_{R \in \mathbb{Q} \cap (0,\rho)} \ \bigcup_{r \in \mathbb{Q} \cap (0,R)} \ \Phi_{r,R}^{-1} \ \Big( \big( - \infty, \, C \, (R/r)^{t-1/n} \big) \Big).
\end{eqnarray*}
The set $\Phi_{r,R}^{-1} \ \Big( \big( - \infty, \, C \, (R/r)^{t-1/n} \big) \Big)$ is open by the upper semicontinuity of $\Phi_{r,R}$, see Lemma \ref{uppersemicontinuity3}.  It follows that $\{F \in \mathcal{K}(X) : \dim_\text{L} F < t  \}$ is a $\mathcal{G}_{\delta \sigma}$ subset of $\mathcal{K}(X)$.
\end{proof}

Theorem \ref{meas2} now follows easily.

\begin{proof}
To show that $\Delta_\text{L}$ is Baire 3, it suffices to show that all open sets of the form $(t, u)$, for $t,u \in \mathbb{R}$ with $t<u$, are pulled back to $\mathcal{F}_{\sigma \delta \sigma}$ sets, see \cite[Theorem 24.3]{kechris}.  For such $t,u$ and writing $\mathcal{Y}^c = \mathcal{K}(X) \setminus \mathcal{Y}$ for the complement of a set $\mathcal{Y} \subseteq \mathcal{K}$ we have
\begin{eqnarray*}
\Delta_\text{L}^{-1} \big( (t,u) \big) &=& \{F \in \mathcal{K}(X) : t< \dim_\text{L} F \} \ \bigcap \ \Bigg( \bigcup_{n \in \mathbb{N}} \{F \in \mathcal{K}(X) : u-1/n< \dim_\text{L} F \}^c \Bigg)
\end{eqnarray*}
and it follows from Lemma \ref{borelset2} that this set is $\mathcal{F}_{\sigma \delta \sigma}$.
\end{proof}

As mentioned in Section \ref{questions}, we are currently unaware if $\Delta_L$ is Baire 2.  One possibility would be to prove the lower semicontinuity of the function $\Phi^0_{r,R}:\mathcal{K}(X) \to \mathbb{R}$ be defined by
\[
\Phi^0_{r,R}(F) = \inf_{x \in F} M_r \Big( B^0\big( x, R \big) \cap F \Big),
\]
but it seems unlikely to us that this function is lower semicontinuous.

\section{Proofs concerning quasi-self-similar sets} \label{proofs2}

\subsection{Proof of Theorem \ref{quasi}}  \label{proofquasi}

In this section we will prove Theorem \ref{quasi}.  Let $(X,d)$ be a metric space and let $F$ be a compact subset of $(X,d)$.  It follows immediately from the definition of box dimension that for all $\varepsilon, \rho >0$ there exists a constant $C_{\varepsilon, \rho} \geq 1$ such that for all $r \in (0, \rho]$ we have
\begin{equation} \label{simplebox0}
\tfrac{1}{C_{\varepsilon, \rho}} \, r^{-\underline{\dim}_\text{B} F + \varepsilon} \ \leq \  N_r(F) \  \leq \  C_{\varepsilon, \rho} \, r^{-\overline{\dim}_\text{B} F - \varepsilon}.
\end{equation}
For a map $f:A \to B$, for metric spaces $(A,d_A)$, $(B,d_B)$ we will write
\[
\text{Lip}^-(f) = \inf_{x,y \in A} \frac{d_B\big(f(x),f(y)\big)}{d_A(x,y)}.
\]
\emph{Proof of (1).}   Suppose $F$ satisfies (1) from Definition \ref{quasidef} with given parameters $a, r_0$ and write $s= \dim_\H F = \overline{\dim}_\text{B} F$.  Let $0<r<R\leq r_0/2$ and $x \in F$.  By condition (1) in the definition of quasi-self-similar, there exists an injection $g_1: B(x,r) \cap F \to F$ with $\text{Lip}^-(g_1) \geq  a \, (2R)^{-1}$.  If $\{U_i\}$ is an $ar/2R$ cover of $g_1(B(x,r) \cap F)$, then $\{g_1^{-1}(U_i)\}$ is an $r$ cover of $B(x,r) \cap F$.  It follows from this and (\ref{simplebox0}) that
\[
N_r\big(B(x,r) \cap F \big) \ \leq \ N_{ar/2R}\big(g_1(B(x,r) \cap F) \big) \ \leq \  N_{ar/2R}(F) \ \leq \ C_{\varepsilon,a/2} (2/a)^{s+\varepsilon} \, \bigg( \frac{R}{r} \bigg)^{s+\varepsilon}
\]
which gives that $\dim_\text{A} F \leq s+\varepsilon$ and letting $\varepsilon \to 0$ completes the proof.  \hfill \qed
\\ \\
\emph{Proof of (2).}
Suppose $F$ satisfies (2) from Definition \ref{quasidef} with given parameters $a, r_0$ and write $s= \dim_\H F = \underline{\dim}_\text{B} F$.  Let $0<r<R\leq r_0/2$ and $x \in F$.  By condition (2) in the definition of quasi-self-similar, there exists an injection $g_2: F \to B(x,r) \cap F $ with $\text{Lip}^-(g_2) \geq  a R$.  If $\{U_i\}$ is an $r$ cover of $g_2(F)$, then $\{g_2^{-1}(U_i)\}$ is an $r/aR$ cover of $F$.  It follows from this and (\ref{simplebox0}) that
\[
N_r\big(B(x,r) \cap F \big) \ \geq \ N_{r} \big(g_2(F) \big) \ \geq \  N_{r/aR}( F) \ \geq \ \tfrac{1}{C_{\varepsilon,1/a}} \, a^{s-\varepsilon} \, \bigg( \frac{R}{r} \bigg)^{s-\varepsilon}
\]
which gives that $\dim_\text{L} F \geq s-\varepsilon$ and letting $\varepsilon \to 0$ completes the proof.    \hfill \qed
\\ \\
\emph{Proof of (3).}  Let $F \subseteq (X,d)$ be a quasi-self-similar set with given parameters $a, r_0$ from Definition \ref{quasidef} and write $s= \dim_\H F$.  Note that it follows from (1)-(2) above that $\dim_\text{L} F = \dim_\text{A} F$, however it does not follow immediately that $F$ is Ahlfors regular, so we will prove that now.  It follows from the results in \cite{implicit} that
\begin{equation} \label{measestim}
a^s \ \leq \  \mathcal{H}^s(F) \  \leq \  4^s \, a^{-s}.
\end{equation}
Let $r \in (0, \,  r_0/2)$ and $x \in F$ and consider the set $B(x,r) \cap F := B(x,r) \cap F$.  By condition (1) in the definition of quasi-self-similar, there exists a map $g_1: B(x,r) \cap F \to F$ with $\text{Lip}^-(g_1) \geq  a \, (2r)^{-1}$.  It follows from this, (\ref{measestim}) and the scaling property for Hausdorff measure, that
\begin{equation} \label{upperahlfors}
\mathcal{H}^s(B(x,r) \cap F) \  \leq \ \text{Lip}^-(g_1)^{-s} \, \mathcal{H}^s\big(g_1(B(x,r) \cap F)\big)  \  \leq \ a^{-s} \, (2r)^{s} \, \mathcal{H}^s(F) \   \leq \ 8^s \, a^{-2s} \, r^s.
\end{equation}
Furthermore, by condition (2)  in the definition of quasi-self-similar, there exists a map $g_2: F \to B(x,r) \cap F$ with $\text{Lip}^-(g_2) \geq  a \, r$.  It follows from this, (\ref{measestim}) and the scaling property for Hausdorff measure, that
\begin{equation} \label{lowerahlfors}
\mathcal{H}^s(B(x,r) \cap F) \  \geq \ \mathcal{H}^s\big(g_2(F)\big)  \  \geq \ \text{Lip}^-(g_2)^{s} \, \mathcal{H}^s(F)  \  \geq \ a^{s} \, r^{s} \, \mathcal{H}^s(F) \   \geq \  a^{2s} \, r^s.
\end{equation}
It follows from (\ref{upperahlfors}) and (\ref{lowerahlfors}) that $F$ is locally Ahlfors regular setting $\lambda = 8^s \, a^{-2s}$ and since $F$ is compact we have that it is, in fact, Ahlfors regular. \hfill \qed

\section{Proofs concerning self-affine sets} \label{proofs3}

\subsection{Preliminary results and approximate squares } \label{prelim}

In this section we will introduce some notation and give some basic technical lemmas.  Let $F$ be a self-affine carpet, which is the attractor of an IFS $\{S_i\}_{i \in \mathcal{I}}$.  Write $\mathcal{I}^* = \bigcup_{k\geq1} \mathcal{I}^k$ to denote the set of all finite sequences with entries in $\mathcal{I}$ and for
\[
\textbf{\emph{i}}= \big(i_1, i_2, \dots, i_k \big) \in \mathcal{I}^*
\]
write
\[
S_{\textbf{\emph{i}}} = S_{i_1} \circ S_{i_2} \circ \dots \circ S_{i_k}
\]
and $\alpha_1 (\textbf{\emph{i}}) \geq \alpha_2 (\textbf{\emph{i}})$ for the singular values of the linear part of the map $S_{\textbf{\emph{i}}}$.  Note that, for all $\textbf{\emph{i}} \in \mathcal{I}^*$, the singular values, $\alpha_1 (\textbf{\emph{i}})$ and $\alpha_2 (\textbf{\emph{i}})$, are just the lengths of the sides of the rectangle $S_{\textbf{\emph{i}}}\big([0,1]^2\big)$.  Also, let
\[
\alpha_{\min} = \min \{\alpha_2(i) : i \in \mathcal{I} \}
\]
and
\[
\alpha_{\max} = \max \{\alpha_1(i) : i \in \mathcal{I} \}.
\]
Write $\mathcal{I}^\mathbb{N}$ to denote the set of all infinite $\mathcal{I}$-valued strings and for $\textbf{\emph{i}} \in \mathcal{I}^\mathbb{N}$ write $\textbf{\emph{i}}\rvert_k \in \mathcal{I}^k$ to denote the restriction of $\textbf{\emph{i}}$ to its first $k$ entries. Let $\Pi:\mathcal{I}^\mathbb{N} \to F$ be the natural surjection from the `symbolic' space to the `geometric' space defined by
\[
\Pi(\textbf{\emph{i}}) = \bigcap_{k \in \mathbb{N}} S_{\textbf{\emph{i}}\rvert_k}\big([0,1]^2\big).
\]
For $\textbf{\emph{i}}, \textbf{\emph{j}} \in \mathcal{I}^*$, we will write $\textbf{\emph{i}} \prec \textbf{\emph{j}}$ if $\textbf{\emph{j}} \vert_k = \textbf{\emph{i}}$ for some $k \leq \lvert \textbf{\emph{j}} \rvert$, where $\lvert \textbf{\emph{j}} \rvert$ is the length of the sequence $\textbf{\emph{j}}$.  For
\[
\textbf{\emph{i}} = (i_1,i_2, \dots, i_{k-1}, i_k)  \in \mathcal{I}^*
\]
let
\[
 \overline{\textbf{\emph{i}}} = (i_1,i_2, \dots, i_{k-1}) \in \mathcal{I}^* \cup \{ \omega\},
\]
where $\omega$ is the empty word.  Note that the map $S_\omega$ is taken to be the identity map, which has singular values both equal to 1.
\\ \\
A subset $\mathcal{I}_0 \subset \mathcal{I}^*$ is called a \emph{stopping} if for every $\textbf{\emph{i}} \in \mathcal{I}^*$ either there exists $\textbf{\emph{j}} \in \mathcal{I}_0$ such that $\textbf{\emph{i}} \prec \textbf{\emph{j}}$ or there exists a unique  $\textbf{\emph{j}} \in \mathcal{I}_0$ such that $\textbf{\emph{j}} \prec \textbf{\emph{i}}$.  An important class of stoppings will be ones where the members are chosen to have some sort of approximate property in common.  In particular, $r$-\emph{stoppings} are stoppings where the smallest sides of the corresponding rectangles are approximately equal to $r$.    For $r \in (0,1]$ we define the $r$-\emph{stopping}, $\mathcal{I}_r$, by
\[
\mathcal{I}_r = \big\{\textbf{\emph{i}} \in \mathcal{I}^* : \alpha_2(\textbf{\emph{i}}) < r \leq \alpha_2( \overline{\textbf{\emph{i}}}) \big\}.
\]
Note that for $\textbf{\emph{i}} \in \mathcal{I}_r$ we have
\begin{equation} \label{stop1}
\alpha_{\min} \, r \leq \alpha_2(\textbf{\emph{i}}) < r.
\end{equation}


We will now fix some notation for the dimensions of the various projections and slices we will be interested in.  Let
\[
s_1 = \dim_\text{B} \pi_1(F),
\]
\[
s_2 = \dim_\text{B} \pi_2(F),
\]
\[
t_1 = \max_{i \in \mathcal{I}} \ \dim_\text{B} \text{Slice}_{1,i}(F),
\]
\[
t_2 = \max_{i \in \mathcal{I}} \ \dim_\text{B} \text{Slice}_{2,i}(F),
\]
\[
u_1 = \min_{i \in \mathcal{I}} \ \dim_\text{B} \text{Slice}_{1,i}(F),
\]
and
\[
u_2 = \min_{i \in \mathcal{I}} \ \dim_\text{B} \text{Slice}_{2,i}(F).
\]
Note that all of these values can be easily computed as they are the dimensions of self-similar sets satisfying the open set condition.  We will be particularly interested in estimating the precise value of the covering function $N_r$ applied to the projections.  It follows immediately from the definition of box dimension that for all $\varepsilon>0$ there exists a constant $C_\varepsilon \geq 1$ such that for all $r \in (0, 1]$ we have
\begin{equation} \label{simplebox1}
\tfrac{1}{C_\varepsilon} \, r^{-s_1 + \varepsilon} \leq N_r(\pi_1 F) \leq C_\varepsilon \, r^{-s_1 - \varepsilon}
\end{equation}
and
\begin{equation} \label{simplebox2}
\tfrac{1}{C_\varepsilon} \, r^{-s_2 + \varepsilon} \leq N_r(\pi_2 F) \leq C_\varepsilon \, r^{-s_2 - \varepsilon}.
\end{equation}


Since the basic rectangles in the construction of $F$ often become very long and thin, they do not provide `natural' covers for $F$ unlike in the self-similar setting.  For this reason, we need to introduce \emph{approximate squares}, which are now a standard concept in the study of self-affine carpets.  The basic idea is to group together the construction rectangles into collections which look roughly like a square.  Let $\textbf{\emph{i}} \in \mathcal{I}^{\mathbb{N}}$ and $r>0$.  Let $k_1(\textbf{\emph{i}},r)$ equal the unique number $k \in \mathbb{N}$ such that
\[
c_{\textbf{\emph{i}}\rvert_{k+1}}< r \leq c_{\textbf{\emph{i}}\rvert_{k}}
\]
and 
$k_2(\textbf{\emph{i}},r)$ equal the unique number $k \in \mathbb{N}$ such that
\[
d_{\textbf{\emph{i}}\rvert_{k+1}}< r \leq d_{\textbf{\emph{i}}\rvert_{k}}.
\]
Finally, we define the approximate square $Q(\textbf{\emph{i}},r)$ `centered' at $\Pi(\textbf{\emph{i}})$, with `radius' $r$ in the following way.  If $k_1(\textbf{\emph{i}},r) < k_2(\textbf{\emph{i}},r)$, then
\[
Q(\textbf{\emph{i}},r) = S_{\textbf{\emph{i}}\rvert_{k_1(\textbf{\emph{i}},r)}}\big([0,1]^2\big) \ \cap \ \Big\{x \in [0,1]^{2} : \pi_1(x)  \in \pi_1\big( S_{\textbf{\emph{i}}\rvert_{k_2(\textbf{\emph{i}},r)}}\big([0,1]^2\big)  \big) \Big\},
\]
if $k_1(\textbf{\emph{i}},r) > k_2(\textbf{\emph{i}},r)$, then
\[
Q(\textbf{\emph{i}},r) = S_{\textbf{\emph{i}}\rvert_{k_2(\textbf{\emph{i}},r)}}\big([0,1]^2\big) \ \cap \ \Big\{x \in [0,1]^{2} : \pi_2(x)  \in \pi_2\big( S_{\textbf{\emph{i}}\rvert_{k_1(\textbf{\emph{i}},r)}}\big([0,1]^2\big)  \big) \Big\},
\]
and if $k_1(\textbf{\emph{i}},r) = k_2(\textbf{\emph{i}},r) = k$, then
\[
Q(\textbf{\emph{i}},r) = S_{\textbf{\emph{i}}\rvert_k}\big([0,1]^2\big) .
\]
We will write
\[
\mathcal{I}_{Q(\textbf{\emph{i}},r)} = \big\{ \textbf{\emph{j}} \in \mathcal{I}^{\max\{k_1(\textbf{\emph{i}},r), \,  k_2(\textbf{\emph{i}},r) \}}: S_\textbf{\emph{j}}(F) \subseteq Q(\textbf{\emph{i}},r) \big\}.
\]
The following lemma gives some of the basic properties of approximate squares.
\begin{lma} \label{cubes} Let $\textbf{i} \in \mathcal{I}^{\mathbb{N}}$ and $r>0$.  
\begin{itemize}
\item[(1)] If $k_1(\textbf{i},r) \geq k_2(\textbf{i},r)$, then for $\textbf{j} \in \mathcal{I}_{Q(\textbf{i},r)}$ we have
\[
r \, \leq \, c_\textbf{j}\,  \leq \,  c_{\max}^{-1} \, r.
\]
\item[(2)] If $k_1(\textbf{i},r) \leq k_2(\textbf{i},r)$, then for $\textbf{j} \in \mathcal{I}_{Q(\textbf{i},r)}$ we have
\[
r \, \leq \, d_\textbf{j}\,  \leq \,  d_{\max}^{-1} \, r.
\]
\item[(3)]  The approximate square $Q(\textbf{i},r)$ is a rectangle with sides parallel to the coordinate axes and with base length in the interval $[r,c_{\max}^{-1} \, r]$ and height in the interval $[r,d_{\max}^{-1} \, r]$, so is indeed \emph{approximately} a square.
\item[(4)] We have
\[
Q(\textbf{i},r)  \subset B\big(\Pi(\textbf{i}), \sqrt{2} \, \alpha_{\min}^{-1} \, r\big).
\]
\item[(5)] For any $x \in F$, the ball $B(x,r)$ can be covered by at most 9 approximate squares of radius $r$ and the constant 9 is sharp.
\end{itemize}
\end{lma}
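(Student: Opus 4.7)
The plan is to dispatch parts (1)--(4) as essentially immediate consequences of the definitions, saving the real effort for part (5). For part (1), when $k_1 \geq k_2$, each $\textbf{\emph{j}} \in \mathcal{I}_{Q(\textbf{\emph{i}},r)}$ has length $k_1(\textbf{\emph{i}},r)$ and begins with $\textbf{\emph{i}}|_{k_1}$ (in the strict inequality case there is in fact only the single word $\textbf{\emph{j}} = \textbf{\emph{i}}|_{k_1}$), so $c_\textbf{\emph{j}} = c_{\textbf{\emph{i}}|_{k_1}}$; the lower bound $c_\textbf{\emph{j}} \geq r$ is simply the definition of $k_1$, while the upper bound follows by dividing the companion inequality $c_{\textbf{\emph{i}}|_{k_1+1}} < r$ through by a single-step contraction factor. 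Part (2) is symmetric under swapping the two coordinate directions. Part (3) then drops out by reading off the two side lengths of $Q(\textbf{\emph{i}},r)$ from its case-wise definition: each side length is precisely one of $c_{\textbf{\emph{i}}|_{k_1}}$, $d_{\textbf{\emph{i}}|_{k_2}}$, both of which lie in the claimed intervals by parts (1)--(2).

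Part (4) is then a one-line diameter estimate: since $\Pi(\textbf{\emph{i}}) \in Q(\textbf{\emph{i}},r)$ and $Q(\textbf{\emph{i}},r)$ is an axis-aligned rectangle of base at most $c_{\max}^{-1}r$ and height at most $d_{\max}^{-1}r$, its diameter is at most $\sqrt{2}\max\{c_{\max}^{-1},d_{\max}^{-1}\}\,r \leq \sqrt{2}\,\alpha_{\min}^{-1}\,r$, using that $\alpha_{\min}$ is the infimum over both coordinate axes of the single-step contraction factors.

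The heart of the lemma is part (5). The key observation is that at any fixed scale $r$ the family $\{Q(\textbf{\emph{i}},r) : \textbf{\emph{i}} \in \mathcal{I}^{\mathbb{N}}\}$ forms an almost-partition of $F$: every point of $F$ belongs to at least one approximate square, and two distinct approximate squares either coincide or have disjoint interiors. Granted this, each approximate square is an axis-aligned rectangle with both side lengths at least $r$ (by parts (1)--(3)), so the axis-aligned square of side $2r$ centred at $x$, which contains $B(x,r)$, can meet at most three approximate squares in each coordinate direction, for a total of at most $3 \times 3 = 9$.

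For sharpness of the constant $9$, it suffices to exhibit a single concrete carpet along with a choice of $x \in F$ and $r > 0$ for which $B(x,r)$ genuinely touches a full $3 \times 3$ block of adjacent approximate squares; a sufficiently rich Bedford--McMullen carpet with an appropriately chosen centre near a corner of an approximate square in each coordinate direction will do. The main obstacle I anticipate is a clean verification of the almost-partition property at a fixed scale, because $Q(\textbf{\emph{i}},r)$ is defined by a case split on whether $k_1 \gtrless k_2$ and one must check that as $\textbf{\emph{i}}$ varies the resulting pieces fit together consistently; the remainder of the lemma is bookkeeping.
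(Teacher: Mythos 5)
The paper itself gives no proof of this lemma --- it simply asserts that the facts ``follow immediately from the definition'' --- so there is no route to compare against; I am assessing your argument on its own merits.

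In part (1) the claim that each $\textbf{\emph{j}} \in \mathcal{I}_{Q(\textbf{\emph{i}},r)}$ ``begins with $\textbf{\emph{i}}|_{k_1}$'', and that $\mathcal{I}_{Q(\textbf{\emph{i}},r)}$ is a singleton when $k_1 > k_2$, is incorrect. The words in $\mathcal{I}_{Q(\textbf{\emph{i}},r)}$ have length $k_1$ and are forced to begin with $\textbf{\emph{i}}|_{k_2}$ (because $Q(\textbf{\emph{i}},r) \subseteq S_{\textbf{\emph{i}}|_{k_2}}([0,1]^2)$), while the remaining entries $j_{k_2+1}, \dots, j_{k_1}$ need only lie in the \emph{same columns} as $i_{k_2+1}, \dots, i_{k_1}$ (so that the $\pi_1$-projections match) and are otherwise free to vary in row. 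This is the whole point of approximate squares: they group many length-$k_1$ cylinders together, as the paper's figure on pseudo-stoppings illustrates. The conclusion $c_\textbf{\emph{j}} = c_{\textbf{\emph{i}}|_{k_1}}$ still holds, because the column constraint fixes all the horizontal contraction ratios, but your justification for it should be repaired. (Separately, dividing $c_{\textbf{\emph{i}}|_{k_1+1}} < r$ by a single-step factor yields $c_\textbf{\emph{j}} < c_{\min}^{-1}\, r$, not $c_{\max}^{-1}\, r$; the bound as printed in the lemma appears to contain a $\max/\min$ slip inherited from the paper.)

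The serious issue is in part (5). The ``almost-partition'' property you rely on is false: at a fixed scale $r$ two distinct approximate squares can be nested, hence have overlapping interiors. Concretely, take a two-map Bara\'nski carpet with $c_1 = 0.4$, $d_1 = 0.6$, $c_2 = 0.6$, $d_2 = 0.4$ and set $r = 0.36$. For $\textbf{\emph{i}} = (1,1,1,\dots)$ one computes $k_1(\textbf{\emph{i}},r) = 1$, $k_2(\textbf{\emph{i}},r) = 2$, and $Q(\textbf{\emph{i}},r) = [0,0.4]\times[0,0.36]$; for $\textbf{\emph{i}}' = (1,2,2,\dots)$ one computes $k_1(\textbf{\emph{i}}',r) = k_2(\textbf{\emph{i}}',r) = 1$, and $Q(\textbf{\emph{i}}',r) = [0,0.4]\times[0,0.6]$. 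These share the first symbol, are distinct, and satisfy $Q(\textbf{\emph{i}},r) \subsetneq Q(\textbf{\emph{i}}',r)$. The failure occurs precisely because the stopping indices $k_1$, $k_2$ depend on the tail of $\textbf{\emph{i}}$, so two codes sharing a long prefix can produce approximate squares of quite different heights or widths at the same scale $r$. Consequently the $3\times 3 = 9$ counting argument does not apply to the full family $\{Q(\textbf{\emph{i}},r)\}$. You correctly anticipated that verifying the partition property was the weak point, but the situation is worse than ``hard to verify'': the property fails outright, and the covering claim in (5) needs a different argument (for instance, selecting only inclusion-maximal approximate squares, or working directly with the level-$\min\{k_1,k_2\}$ cylinder rectangles, which do have pairwise disjoint interiors).
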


\begin{proof}
These facts follow immediately from the definition of approximate squares and are omitted.
\end{proof}

Note that (4) and (5) together imply that we may replace $N_r(B(x,R))$ with $N_r(Q(\textbf{\emph{i}},R))$ in the definitions of Assouad and lower dimension.  Bara\'nski \cite{baranski} defined the numbers $D_A, D_B$ to be the unique real numbers satisfying
\[
\sum_{i \in \mathcal{I}} c_i^{s_1} d_i^{D_A-s_1} = 1 \qquad \text{and} \qquad \sum_{i \in \mathcal{I}} d_i^{s_2} c_i^{D_B-s_2} = 1
\]
respectively.  He then proved that $\dim_\text{B} F = \max \{D_A, \, D_B\}$.  The following lemma relates the numbers $D_A$ and $D_B$ to the numbers $s_1$, $s_2$, $u_1$, $u_2$, $t_1$ and $t_2$.

\begin{lma} \label{DADB}
We have
\[
s_1+u_1 \ \leq \ D_A  \ \leq \ s_1+t_1
\]
and
\[
s_2+u_2 \ \leq \ D_B  \ \leq \ s_2+t_2.
\]
\end{lma}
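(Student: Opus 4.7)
The strategy is to rewrite the defining Hutchinson-type equation for $D_A$ as a sum grouped over columns, so that the exponent of $d_i$ is compared directly against the slice dimensions $\sigma^{(\ell)}$; the strict monotonicity $\alpha \mapsto d^\alpha$ for $d\in(0,1)$ then does the rest.

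I will prove the bounds on $D_A$; the bounds on $D_B$ are identical after exchanging the roles of rows and columns. Enumerate the globally defined vertical strips of the carpet construction as $\ell=1,\dots,m$ with widths $c^{(\ell)}$, and let $\mathcal{I}(\ell)\subseteq\mathcal{I}$ consist of those $i$ with $S_i([0,1]^2)$ contained in column $\ell$, so that $c_i=c^{(\ell)}$ for every $i\in\mathcal{I}(\ell)$. Since $\pi_1(F)$ is the attractor of the IFS $\{x\mapsto c^{(\ell)}x+a^{(\ell)}\}_{\ell:\mathcal{I}(\ell)\neq\emptyset}$ with the open set condition, Hutchinson's theorem yields $\sum_{\ell:\mathcal{I}(\ell)\neq\emptyset}(c^{(\ell)})^{s_1}=1$. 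Next, fix any $i\in\mathcal{I}(\ell)$; the $x$-coordinate $x_0$ of the fixed point of $S_i$ lies in the interior of column $\ell$ by translate linearity, so only the maps $S_j$ with $j\in\mathcal{I}(\ell)$ have images meeting the vertical line $\{x=x_0\}$. Expanding $F=\bigcup_j S_j(F)$ and slicing at $x_0$ gives the self-similar identity
\[
\text{Slice}_{1,i}(F)\ =\ \bigcup_{j\in\mathcal{I}(\ell)}\big(b_j+d_j\cdot\text{Slice}_{1,i}(F)\big),
\]
whose images lie in the vertically disjoint row intervals $[b_j,b_j+d_j]$, so the open set condition holds. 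Hence the dimension $\sigma^{(\ell)}:=\dim_\text{B}\text{Slice}_{1,i}(F)$ depends only on $\ell$ and satisfies $\sum_{j\in\mathcal{I}(\ell)}d_j^{\sigma^{(\ell)}}=1$; by definition $u_1=\min_\ell\sigma^{(\ell)}$ and $t_1=\max_\ell\sigma^{(\ell)}$.

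Now group the defining equation for $D_A$ by columns:
\[
1\ =\ \sum_{i\in\mathcal{I}} c_i^{s_1}d_i^{D_A-s_1}\ =\ \sum_{\ell:\mathcal{I}(\ell)\neq\emptyset}(c^{(\ell)})^{s_1}\sum_{j\in\mathcal{I}(\ell)}d_j^{D_A-s_1}.
\]
Since $d_j\in(0,1)$, the map $\alpha\mapsto d_j^\alpha$ is strictly decreasing. If $D_A-s_1>t_1$, then $D_A-s_1>\sigma^{(\ell)}$ for every relevant $\ell$, so $\sum_{j\in\mathcal{I}(\ell)}d_j^{D_A-s_1}<\sum_{j\in\mathcal{I}(\ell)}d_j^{\sigma^{(\ell)}}=1$ strictly for each such $\ell$, forcing $1<\sum_\ell(c^{(\ell)})^{s_1}=1$, a contradiction. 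Symmetrically, if $D_A-s_1<u_1$ then every inner sum exceeds $1$ strictly, forcing $1>1$. Therefore $s_1+u_1\le D_A\le s_1+t_1$, and the entirely analogous row-based argument yields $s_2+u_2\le D_B\le s_2+t_2$.

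The whole argument is bookkeeping; the only mildly subtle ingredient is identifying the self-similar structure of $\text{Slice}_{1,i}(F)$ with exactly the contraction ratios $\{d_j\}_{j\in\mathcal{I}(\ell)}$ from the column containing the fixed point of $S_i$, where one needs the disjointness of rows within a column in order to invoke the open set condition.
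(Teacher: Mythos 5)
Your proof is correct and takes essentially the same route as the paper: group the defining sum for $D_A$ by columns, identify the inner sums $\sum_{j\in\mathcal{I}(\ell)}d_j^{\alpha}$ as the Hutchinson sums for the column slices, and use monotonicity of $\alpha\mapsto d^\alpha$ to force $u_1 \leq D_A - s_1 \leq t_1$ on pain of contradiction. The paper writes out only the bound $s_1+u_1\leq D_A$ and asserts the others follow similarly, whereas you spell out both directions and also justify the self-similar identity for the slices (which the paper takes as given earlier in Section 2.3); your slightly more careful argument about the fixed point lying in the interior of its column is worth having, though note that when $S_i$ occupies the leftmost or rightmost column the fixed point can lie on $\{x=0\}$ or $\{x=1\}$, which are boundaries of the unit square rather than interior points — the identity still holds there since only maps from that single column can meet such a slice.
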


\begin{proof}
We will prove that $s_1+u_1 \leq  D_A$.  The other inequalities are proved similarly.  Suppose that $D_A < s_1+u_1$.  Write $m$ and $n$ for the number of non-empty columns and rows respectively, counting columns from the left and rows from the bottom, and for $i \in \{1, \dots, m\}$ write
\[
\mathcal{C}_i = \{j \in \mathcal{I} : S_j([0,1]^2) \text{ is found in the $i$th non-empty column of the defining pattern} \}
\]
and for $i \in \{1, \dots, n\}$ write
\[
\mathcal{R}_i = \{j \in \mathcal{I} : S_j([0,1]^2) \text{ is found in the $i$th non-empty row of the defining pattern} \}.
\]
A useful consequence of splitting $\mathcal{I}$ up into columns and rows is that if $i,j$ are in the same column, then $c_i = c_j$ and if $i,j$ are in the same row, then $d_i = d_j$.  As such, for $i \in \{1, \dots, m\}$ we will write $\hat{c}_i$ for the common base length in the $i$th column and for $i \in \{1, \dots, n\}$ we will write $\hat{d}_i$ for the common height in the $i$th row.  We have
\[
1 \ = \  \sum_{i \in \mathcal{I}} c_i^{s_1} d_i^{D_A-s_1}  \ > \    \sum_{i \in \mathcal{I}} c_i^{s_1} d_i^{s_1+u_1-s_1} \  = \  \sum_{i=1}^{m} \hat{c}_i^{s_1} \sum_{j \in \mathcal{C}_i} d_j^{u_1} \  \geq \   \sum_{i=1}^{m} \hat{c}_i^{s_1} \ = \ 1
\]
which is a contradiction.
\end{proof}

\begin{lma} \label{DADBadditive}
Let $\mathcal{I}_0$ be a stopping.  Then
\[
\sum_{\textbf{i} \in \mathcal{I}_0} c_\textbf{i}^{s_1} d_\textbf{i}^{D_A-s_1} =  \sum_{\textbf{i} \in \mathcal{I}_0} d_\textbf{i}^{s_2} c_\textbf{i}^{D_B-s_2} = 1
\]
\end{lma}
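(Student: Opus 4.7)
The plan is to exploit the multiplicativity of the contractions $c_{\textbf{\emph{i}}}$ and $d_{\textbf{\emph{i}}}$ along words, combined with a telescoping/partition argument based on the definition of a stopping. I will only write out the first equality, since the second is identical after swapping coordinates and replacing $(s_1,D_A)$ by $(s_2,D_B)$.

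First I would record the trivial but essential fact that for any $n \geq 1$,
\[
\sum_{\textbf{\emph{l}} \in \mathcal{I}^n} c_{\textbf{\emph{l}}}^{s_1} d_{\textbf{\emph{l}}}^{D_A - s_1} \, = \,  \Bigg(\sum_{i \in \mathcal{I}} c_i^{s_1} d_i^{D_A-s_1}\Bigg)^{n} \,=\, 1,
\]
by factorising according to the letters of $\textbf{\emph{l}}$ and using the defining relation for $D_A$; the case $n=0$ holds by the convention that $c_\omega = d_\omega = 1$.

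Next I would show that any stopping is finite and use the stopping property to produce a suitable partition. Let $K = \max_{\textbf{\emph{j}} \in \mathcal{I}_0} |\textbf{\emph{j}}|$ (finiteness of $\mathcal{I}_0$ follows from the stopping property applied inductively, since otherwise some finite word would have no $\prec$-comparable element of $\mathcal{I}_0$). For any $\textbf{\emph{k}} \in \mathcal{I}^K$, the stopping axiom gives either some $\textbf{\emph{j}} \in \mathcal{I}_0$ with $\textbf{\emph{k}} \prec \textbf{\emph{j}}$, which is impossible by maximality of $K$, or a unique $\textbf{\emph{j}} \in \mathcal{I}_0$ with $\textbf{\emph{j}} \preceq \textbf{\emph{k}}$ (including the case $\textbf{\emph{j}} = \textbf{\emph{k}}$). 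Thus the sets $\{\textbf{\emph{j}}\} \cdot \mathcal{I}^{K - |\textbf{\emph{j}}|}$ for $\textbf{\emph{j}} \in \mathcal{I}_0$ form a partition of $\mathcal{I}^K$.

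The proof then finishes by telescoping:
\[
1 \,=\, \sum_{\textbf{\emph{k}} \in \mathcal{I}^K} c_{\textbf{\emph{k}}}^{s_1} d_{\textbf{\emph{k}}}^{D_A-s_1} \,=\, \sum_{\textbf{\emph{j}} \in \mathcal{I}_0} \, \sum_{\textbf{\emph{l}} \in \mathcal{I}^{K-|\textbf{\emph{j}}|}} c_{\textbf{\emph{j}}\textbf{\emph{l}}}^{s_1} d_{\textbf{\emph{j}}\textbf{\emph{l}}}^{D_A-s_1} \,=\, \sum_{\textbf{\emph{j}} \in \mathcal{I}_0} c_{\textbf{\emph{j}}}^{s_1} d_{\textbf{\emph{j}}}^{D_A-s_1}\, \sum_{\textbf{\emph{l}} \in \mathcal{I}^{K-|\textbf{\emph{j}}|}} c_{\textbf{\emph{l}}}^{s_1} d_{\textbf{\emph{l}}}^{D_A-s_1},
\]
where the last equality uses $c_{\textbf{\emph{j}}\textbf{\emph{l}}} = c_{\textbf{\emph{j}}} c_{\textbf{\emph{l}}}$ and $d_{\textbf{\emph{j}}\textbf{\emph{l}}} = d_{\textbf{\emph{j}}} d_{\textbf{\emph{l}}}$. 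The inner sum equals $1$ by the first step, giving the claim. There is no real obstacle here; the only point requiring care is justifying finiteness of $\mathcal{I}_0$ (so that $K$ exists), which I would handle by a short König's lemma style argument or simply by noting that in every application of this lemma in the paper the stopping is manifestly finite (e.g., the $r$-stoppings $\mathcal{I}_r$).
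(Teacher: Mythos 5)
Your telescoping argument is essentially the argument the author has in mind (the paper's own proof says only ``follows immediately from the definitions of $D_A$ and $D_B$''), and the decomposition of $\mathcal{I}^K$ into the sets $\{\textbf{\emph{j}}\}\cdot\mathcal{I}^{K-|\textbf{\emph{j}}|}$ together with multiplicativity of $c$ and $d$ along words is the right way to flesh it out. One factual slip: a stopping in the sense of this definition need \emph{not} be finite, so the claim that finiteness ``follows from the stopping property applied inductively'' is not correct. For instance, with $\mathcal{I}=\{0,1\}$ the set $\mathcal{I}_0=\{1,01,001,0001,\dots\}$ is an infinite stopping (every word $0^m$ is a prefix of $0^m1\in\mathcal{I}_0$, and every other word has a unique prefix in $\mathcal{I}_0$), yet every finite word \emph{does} have a $\prec$-comparable element of $\mathcal{I}_0$, so no contradiction arises from the K\"onig-style reasoning you sketch. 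Your fallback observation --- that every stopping actually invoked in the paper (e.g.\ the $r$-stoppings $\mathcal{I}_r$, or the shifted $r$-stoppings conditioned on a prefix) is manifestly finite because the lengths are bounded in terms of $r$ and $\alpha_{\max}$ --- is the honest justification, and with it your proof is complete. If you wanted the lemma in full generality, for an infinite stopping you would instead note that the weights $p_{\textbf{\emph{i}}}=c_{\textbf{\emph{i}}}^{s_1}d_{\textbf{\emph{i}}}^{D_A-s_1}$ define a non-atomic Bernoulli measure on $\mathcal{I}^{\mathbb{N}}$, and the cylinders $[\textbf{\emph{j}}]$, $\textbf{\emph{j}}\in\mathcal{I}_0$, are pairwise disjoint with complement of measure zero, so their total mass is $1$.
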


\begin{proof}
This follows immediately from the definitions of $D_A$ and $D_B$.
\end{proof}

Let $r>0$ and $\textbf{\emph{i}} =(i_1, i_2, \dots)  \in \mathcal{I}^\mathbb{N}$. We call $\mathcal{I}_0 \subset \mathcal{I}^*$ a $Q(\textbf{\emph{i}},r)$-\emph{pseudo stopping} if the following conditions are satisfied.
\begin{itemize}
\item[(1)] For each $\textbf{\emph{j}} \in \mathcal{I}_0$, we have $\textbf{\emph{i}}_{\min\{k_1(\textbf{\emph{i}},r), k_2(\textbf{\emph{i}},r)\}} \prec \textbf{\emph{j}}$,
\item[(2)] For each $\textbf{\emph{j}} \in \mathcal{I}_0$, we have $\lvert \textbf{\emph{j}} \rvert \leq \max\{k_1(\textbf{\emph{i}},r), k_2(\textbf{\emph{i}},r)\}$,
\item[(3)] For every $\textbf{\emph{i}}' \in \mathcal{I}^{\max\{k_1(\textbf{\emph{i}},r), k_1(\textbf{\emph{i}},r)\}}$ there exists a unique $\textbf{\emph{j}} \in \mathcal{I}_0$ such that $\textbf{\emph{j}} \prec \textbf{\emph{i}}'$.
\end{itemize}
The important feature of a $Q(\textbf{\emph{i}},r)$-pseudo stopping, $\mathcal{I}_0$, is that the sets $\{S_\textbf{\emph{j}}([0,1]^2) \}_{\textbf{\emph{j}} \in \mathcal{I}_0}$ intersect the approximate square $Q(\textbf{\emph{i}},r)$ in such a way as to induce natural IFSs of similarities on $[0,1]$.  For instance, if $\max\{k_1(\textbf{\emph{i}},r), k_2(\textbf{\emph{i}},r)\} = k_1(\textbf{\emph{i}},r)$, then each of the base lengths of the sets $\{S_\textbf{\emph{j}}([0,1]^2) \}_{\textbf{\emph{j}} \in \mathcal{I}_0}$ are greater than or equal to the base length of the approximate square.  We then focus on the vertical lengths and, after scaling these up by the height of $Q(\textbf{\emph{i}},r)$, use these as similarity ratios for a set of 1-dimensional contractions on $[0,1]$.  It is easy to see that in this `vertical case', the similarity dimension of the induced IFS lies in the interval $[u_1, t_1]$.  This trick is illustrated in the following lemma and will be used frequently in the subsequent proofs.

\begin{lma} \label{selfsim}
Let $r>0$, $\textbf{i} \in \mathcal{I}^\mathbb{N}$ and let $\mathcal{I}_0$ be a $Q(\textbf{i},r)$-pseudo stopping and assume that $k_1(\textbf{i},r) \geq k_2(\textbf{i},r)$.  Then, for any $t \geq t_1$, we have
\[
\sum_{\textbf{j} \in \mathcal{I}_0 } (d_\textbf{j}/r)^{t} \ \leq \  d_{\min}^{-t}
\]
and for any $u \leq u_1$, we have
\[
\sum_{\textbf{j} \in \mathcal{I}_0 } (d_\textbf{j}/r)^{u} \ \geq \  1.
\]
\end{lma}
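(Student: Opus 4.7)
The plan is to reduce the two inequalities to corresponding inequalities for the shifted words $\textbf{\emph{j}}'$ (obtained from $\textbf{\emph{j}}$ by stripping off the prefix $\textbf{\emph{i}}\rvert_{k_2}$), and then resolve those by induction driven by Hutchinson's formula applied column by column. Set $k_1 := k_1(\textbf{\emph{i}}, r)$ and $k_2 := k_2(\textbf{\emph{i}}, r)$. Condition~(1) of the pseudo-stopping definition lets me write each $\textbf{\emph{j}} \in \mathcal{I}_0$ as $\textbf{\emph{j}} = \textbf{\emph{i}}\rvert_{k_2} \cdot \textbf{\emph{j}}'$ for some $\textbf{\emph{j}}' \in \mathcal{I}^* \cup \{\omega\}$, so that $d_\textbf{\emph{j}} = d_{\textbf{\emph{i}}\rvert_{k_2}} \, d_{\textbf{\emph{j}}'}$. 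The definition of $k_2$ gives $r \leq d_{\textbf{\emph{i}}\rvert_{k_2}} < r / d_{\min}$, i.e. $d_{\textbf{\emph{i}}\rvert_{k_2}}/r \in [1, d_{\min}^{-1})$. Writing $\mathcal{I}_0' := \{\textbf{\emph{j}}' : \textbf{\emph{i}}\rvert_{k_2} \textbf{\emph{j}}' \in \mathcal{I}_0\}$ and factoring, it will thus suffice to prove
\[
\sum_{\textbf{\emph{j}}' \in \mathcal{I}_0'} d_{\textbf{\emph{j}}'}^{t} \ \leq \ 1 \quad (t \geq t_1) \qquad \text{and} \qquad \sum_{\textbf{\emph{j}}' \in \mathcal{I}_0'} d_{\textbf{\emph{j}}'}^{u} \ \geq \ 1 \quad (u \leq u_1),
\]
after which multiplying by $(d_{\textbf{\emph{i}}\rvert_{k_2}}/r)^{t}$ delivers the stated bounds $d_{\min}^{-t}$ and $1$.

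The key structural observation is that, since $k_1 \geq k_2$, the approximate square $Q(\textbf{\emph{i}}, r)$ is obtained from $S_{\textbf{\emph{i}}\rvert_{k_2}}([0,1]^2)$ by restricting to the narrow vertical strip of horizontal width comparable to $r$ singled out by the column sequence of $\textbf{\emph{i}}$ at levels $k_2+1, \ldots, k_1$. Because every $\textbf{\emph{j}}' \in \mathcal{I}_0'$ is a prefix of the suffix of some element of $\mathcal{I}_{Q(\textbf{\emph{i}}, r)}$, staying inside this vertical strip forces each letter $j_\ell'$ of $\textbf{\emph{j}}' = (j_1', \ldots, j_m')$ to lie in the same column as $i_{k_2+\ell}$. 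Writing $\mathcal{C}_i$ for the column of the defining pattern containing $S_i$, $\mathcal{I}_0'$ is therefore a stopping in the restricted tree whose branching at depth $\ell$ is confined to $\mathcal{C}_{i_{k_2+\ell}}$.

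With this column structure the two sum inequalities will follow by induction on the maximum length of words in $\mathcal{I}_0'$. For each column $\mathcal{C}$ of the defining pattern, $\text{Slice}_{1,i}(F)$ (for any $i \in \mathcal{C}$) is a self-similar set on the vertical axis with contraction ratios $\{d_j\}_{j \in \mathcal{C}}$ satisfying the open set condition, so its box dimension $s_\mathcal{C} \in [u_1, t_1]$ is the Hutchinson exponent $\sum_{j \in \mathcal{C}} d_j^{s_\mathcal{C}} = 1$; since each $d_j \in (0,1)$, this immediately gives $\sum_{j \in \mathcal{C}} d_j^{t} \leq 1$ for $t \geq t_1$ and $\sum_{j \in \mathcal{C}} d_j^{u} \geq 1$ for $u \leq u_1$. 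In the base case $\mathcal{I}_0' = \{\omega\}$ both sums equal $1$. Otherwise, splitting by first letter gives $\mathcal{I}_0' = \bigsqcup_{j \in \mathcal{C}_{i_{k_2+1}}} \{(j) \cdot \textbf{\emph{j}}'' : \textbf{\emph{j}}'' \in \mathcal{I}_{0,j}'\}$, where each $\mathcal{I}_{0,j}'$ is a pseudo-stopping of strictly smaller depth in the shifted subtree, and
\[
\sum_{\textbf{\emph{j}}' \in \mathcal{I}_0'} d_{\textbf{\emph{j}}'}^{t} \ = \ \sum_{j \in \mathcal{C}_{i_{k_2+1}}} d_j^{t} \sum_{\textbf{\emph{j}}'' \in \mathcal{I}_{0,j}'} d_{\textbf{\emph{j}}''}^{t} \ \leq \ 1
\]
by the induction hypothesis combined with the column-sum bound; the lower bound is handled analogously.

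The main obstacle I anticipate is the column bookkeeping carried out in the second paragraph: one has to inspect the definition of $Q(\textbf{\emph{i}}, r)$ for $k_1 \geq k_2$ carefully enough to conclude that membership in $\mathcal{I}_{Q(\textbf{\emph{i}}, r)}$ is equivalent to every letter of the suffix past $\textbf{\emph{i}}\rvert_{k_2}$ lying in the prescribed column. Once this column constraint is in hand the inductive step is routine, and the factor $d_{\min}^{-t}$ in the upper bound arises entirely from the elementary estimate $d_{\textbf{\emph{i}}\rvert_{k_2}}/r < d_{\min}^{-1}$ accumulated when $\textbf{\emph{i}}\rvert_{k_2}$ is factored out.
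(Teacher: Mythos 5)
Your proof is correct and follows essentially the same route as the paper's sketch: factor out the common prefix of length $k_2$ and use $d_{\textbf{\emph{i}}\rvert_{k_2}}/r \in [1, d_{\min}^{-1})$ to absorb the $d_{\min}^{-t}$ constant, then bound the remaining suffix sum column by column via the Hutchinson exponents of the induced vertical IFSs. The only difference is one of explicitness: the paper compresses the suffix estimate into a one-line appeal to a ``1-dimensional IFS with similarity dimension at most $t_1$ (resp. at least $u_1$)'', whereas you unpack exactly that claim into an induction on the depth of the column-constrained stopping, which is the combinatorial detail the paper's sketch leaves to the reader.
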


\begin{proof}
This proof is straightforward and we will only sketch it.  Let $t \geq t_1$ and let $k_1 = k_1(\textbf{\emph{i}},r)  \geq k_2(\textbf{\emph{i}},r) = k_2$.  We have
\begin{eqnarray*}
\sum_{\textbf{\emph{j}} \in \mathcal{I}_0} (d_\textbf{\emph{j}}/r)^{t} &=& \sum_{\textbf{\emph{j}} \in \mathcal{I}_0} (d_{i_1} \dots d_{j_{k_2}}/r)^{t} \, (d_{j_{k_2+1}} \dots d_{j_{k_1}})^{t} \\ \\
&\leq& d_{\min}^{-t} \, \sum_{\textbf{\emph{j}} \in \mathcal{I}_0}  (d_{j_{k_2+1}} \dots d_{j_{k_1}})^{t}  \qquad \text{by Lemma \ref{cubes} (2)}\\ \\
&\leq& d_{\min}^{-t}
\end{eqnarray*}
since viewing the $(d_{j_{k_2+1}} \dots d_{j_{k_1}})$ as contraction ratios of a 1-dimensional IFS of similarities and noting that this IFS has similarity dimension less than or equal to $t_1$, yields
\[
\sum_{\textbf{\emph{i}} \in \mathcal{I}_0}  (d_{j_{k_2+1}} \dots d_{j_{k_1}})^{t} \ \leq  \ 1.
\]
The second estimate is similar.  For $u \leq u_1$, we have
\begin{eqnarray*}
\sum_{\textbf{\emph{j}} \in \mathcal{I}_0} (d_\textbf{\emph{j}}/r)^{u} &=& \sum_{\textbf{\emph{j}} \in \mathcal{I}_0} (d_{j_1} \dots d_{j_{k_2}}/r)^{u} \, (d_{j_{k_2+1}} \dots d_{j_{k_1}})^{u} \\ \\
&\geq&  \sum_{\textbf{\emph{i}} \in \mathcal{I}_0}  (d_{j_{k_2+1}} \dots d_{j_{k_1}})^{u}  \qquad \text{by Lemma \ref{cubes} (2)} \\ \\
&\geq& 1
\end{eqnarray*}
since viewing the $(d_{j_{k_2+1}} \dots d_{j_{k_1}})$ as contraction ratios of a 1-dimensional IFS of similarities and noting that this IFS has similarity dimension greater than or equal to $u_1$, yields
\[
 \sum_{\textbf{\emph{j}} \in \mathcal{I}_0}  (d_{j_{k_2+1}} \dots d_{j_{k_1}})^{u} \\ \\
 \ \geq \  1.
\]
This completes the proof.
\end{proof}

Note that there are obvious analogues of the above Lemma in the case $k_1(\textbf{\emph{i}},r)  < k_2(\textbf{\emph{i}},r) $, but we omit them here.  Two natural examples of $Q(\textbf{\emph{i}},r)$-pseudo stoppings are $\{\textbf{\emph{i}} \vert_{\min\{k_1(\textbf{\emph{i}},r), \, k_2(\textbf{\emph{i}},r)\}}\}$ and $\mathcal{I}_{Q(\textbf{\emph{i}},r)}$.  We give an example of an intermediate $Q(\textbf{\emph{i}},r)$-pseudo stopping in the following figure.

\begin{figure}[H]
	\centering
	\includegraphics[width=155mm]{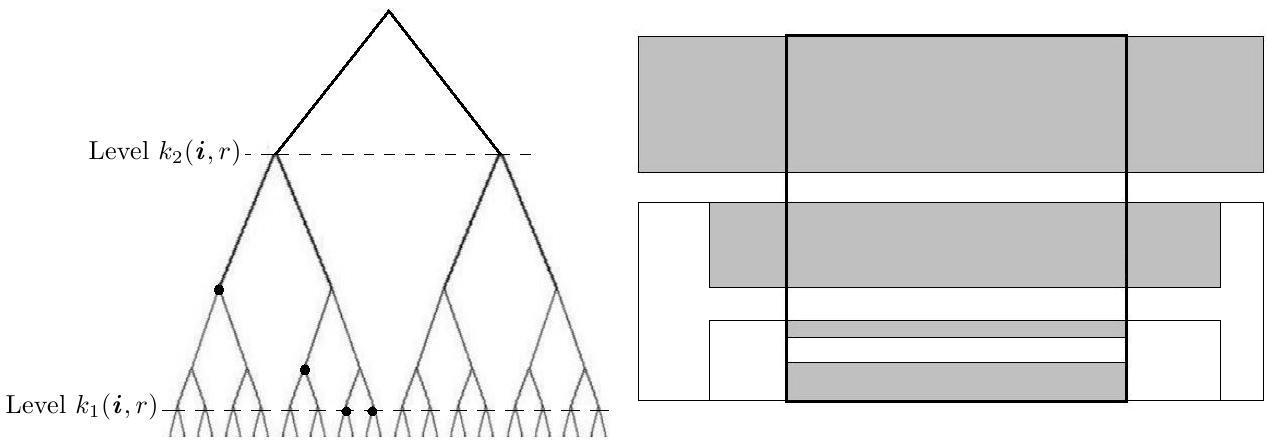}
\caption{A binary tree giving a graphical representation of a pseudo stopping, $\mathcal{I}_0$, with black dots representing the elements of the pseudo stopping (left) and an indication of how the grey rectangles $\{S_\textbf{\emph{j}}([0,1]^2) \}_{\textbf{\emph{j}} \in \mathcal{I}_0}$ intersect the approximate square $Q(\textbf{\emph{i}},r)$ (right).}
\end{figure}

\subsection{Proof of Theorem \ref{uppermain} for the mixed class} \label{upperbaranski}

\textbf{Upper bound.}  The key to proving the upper bound for $\dim_\text{A}F$ is to find the appropriate way to cover approximate squares.  Fix $\textbf{\emph{i}}' \in \mathcal{I}^\mathbb{N}$, $R>0$ and $r \in (0, R)$ and consider the approximate square $Q(\textbf{\emph{i}}',R)$.  Without loss of generality assume that $k_1(\textbf{\emph{i}}',R) \geq k_2(\textbf{\emph{i}}',R)$ and to simplify notation let $k=k_1(\textbf{\emph{i}}',R)$.   Furthermore we may assume that there exists $j_1, j_2 \in \mathcal{I}$ such that $c_{j_1} > d_{j_1}$ and $c_{j_2} < d_{j_2}$ as otherwise we are in the horizontal or vertical class, which will be dealt with in Section \ref{upperlalley}. Let $s =  \max_{i \in \mathcal{I}} \, \max_{j=1,2} \, \big(\dim_\text{B} \pi_j(F) \, + \,  \dim_\text{B}  \text{Slice}_{j,i}(F) \big)$.  It suffices to prove that for all $\varepsilon \in (0,1)$, there exists a constant $C(\varepsilon)$ such that
\[
N_r\Big( Q(\textbf{\emph{i}}',R) \cap F \Big) \leq C(\varepsilon) \, \bigg(\frac{R}{r} \bigg)^{ s +\varepsilon}.
\]
Let $\varepsilon \in (0,1)$. Writing
\[
\mathcal{I}_Q = \mathcal{I}_{Q(\textbf{\emph{i}}',R)} = \big\{ \textbf{\emph{j}} \in \mathcal{I}^k: S_\textbf{\emph{j}}(F) \subseteq Q(\textbf{\emph{i}}',R) \big\},
\]
we first split the approximate square $Q(\textbf{\emph{i}}',R)$ up as
\[
Q(\textbf{\emph{i}}',R) \cap F = \bigcup_{ \textbf{\emph{i}} \in \mathcal{I}_Q}  S_\textbf{\emph{i}}(F).
\]
Secondly, we group together the sets $S_\textbf{\emph{i}}(F)$ for which $d_\textbf{\emph{i}}<r$ and cover their union separately.  Within the other sets, $S_\textbf{\emph{i}}(F)$ we iterate the IFS until one side of the rectangle $S_\textbf{\emph{ij}}\big( [0,1]^2 \big) \supseteq S_\textbf{\emph{ij}}(F)$ is smaller than $r$.  This is reminiscent of the techniques used by the author in \cite{me_box}.  We then cover each of the resulting copies of $F$ individually.  This is especially convenient because covering the part of $F$ which lies in such a rectangle by sets of radius $r$ is equivalent to covering a scaled down copy of the projection of $F$ onto either the horizontal or vertical axis.  Finally, we split the sets $S_\textbf{\emph{ij}}(F)$ which we are covering individually into two groups according to whether the short side of $S_\textbf{\emph{ij}}\big( [0,1]^2 \big)$ is vertical or horizontal.  We have
\begin{eqnarray}
N_r\Big( Q(\textbf{\emph{i}}',R) \cap F \Big) &\leq& N_r\Bigg( \ \ \bigcup_{\substack{\textbf{\emph{i}} \in \mathcal{I}_Q : \\ \\
d_\textbf{\emph{i}} < r}}  S_{\textbf{\emph{i}}}(F) \  \ \Bigg) \ + \ \sum_{\substack{\textbf{\emph{i}} \in \mathcal{I}_Q : \\ \\
d_\textbf{\emph{i}} \geq r}}  \ \sum_{\substack{\textbf{\emph{j}} \in \mathcal{I}^*:\\ \\
\textbf{\emph{ij}} \in \mathcal{I}_r} }N_r \Big( S_{\textbf{\emph{ij}}}(F)  \Big) \nonumber \\ \nonumber \\
&\leq& N_r\Bigg( \ \ \bigcup_{\substack{\textbf{\emph{i}} \in \mathcal{I}_Q :  \\ \\
d_\textbf{\emph{i}} < r}}  S_{\textbf{\emph{i}}}(F) \  \ \Bigg) \nonumber \\ \nonumber \\
&\,&  + \ \sum_{\substack{\textbf{\emph{i}} \in \mathcal{I}_Q : \\ \\
d_\textbf{\emph{i}} \geq r}}   \ \sum_{\substack{\textbf{\emph{j}} \in \mathcal{I}^*:\\ \\
\textbf{\emph{ij}} \in \mathcal{I}_r, \\ \\
\alpha_2(\textbf{\emph{ij}}) = d_\textbf{\emph{ij}}}} N_r \Big( S_{\textbf{\emph{ij}}}(F)  \Big) \ + \  \sum_{\substack{\textbf{\emph{i}} \in \mathcal{I}_Q \\ \\
d_\textbf{\emph{i}} \geq r}}  \ \sum_{\substack{\textbf{\emph{j}} \in \mathcal{I}^*:\\ \\
\textbf{\emph{ij}} \in \mathcal{I}_r, \\ \\
\alpha_2(\textbf{\emph{ij}}) = c_\textbf{\emph{ij}}}} N_r \Big( S_{\textbf{\emph{ij}}}(F)  \Big) \nonumber \\ \nonumber \\
&=& N_r\Bigg( \ \ \bigcup_{\substack{\textbf{\emph{i}} \in \mathcal{I}_Q :\\ \\
d_\textbf{\emph{i}} < r}}  S_{\textbf{\emph{i}}}(F) \  \ \Bigg) \nonumber \\ \nonumber \\
&\,&  + \  \sum_{\substack{\textbf{\emph{i}} \in \mathcal{I}_Q :\\ \\
d_\textbf{\emph{i}} \geq r}}  \ \sum_{\substack{\textbf{\emph{j}} \in \mathcal{I}^*:\\ \\
\textbf{\emph{ij}} \in \mathcal{I}_r, \\ \\
\alpha_2(\textbf{\emph{ij}}) = d_\textbf{\emph{ij}}}} N_{r/c_\textbf{\emph{ij}}} \big( \pi_1(F)  \big) \ + \  \sum_{\substack{\textbf{\emph{i}} \in \mathcal{I}_Q :\\ \\
d_\textbf{\emph{i}} \geq r}}  \ \sum_{\substack{\textbf{\emph{j}} \in \mathcal{I}^*:\\ \\
\textbf{\emph{ij}} \in \mathcal{I}_r, \\ \\
\alpha_2(\textbf{\emph{ij}}) = c_\textbf{\emph{ij}}}} N_{r/d_\textbf{\emph{ij}}} \big( \pi_2(F)  \big) \nonumber
\end{eqnarray}
Now that we have established a natural way to cover $Q(\textbf{\emph{i}}',R)$, we need to show that this yields the correct estimates. We will analyse each of the three above terms separately.  Write
\[
\mathcal{I}_Q^{<r} = \big\{\textbf{\emph{i}} \in \mathcal{I}_Q : d_\textbf{\emph{i}} < r \big\}.
\]
For the first term, we have

\begin{eqnarray*}
N_r\Bigg( \ \ \bigcup_{\substack{\textbf{\emph{i}} \in \mathcal{I}_Q :\\ \\
d_\textbf{\emph{i}} < r}}  S_{\textbf{\emph{i}}}(F) \  \ \Bigg) & = & N_r\Bigg( \ \ \bigcup_{\textbf{\emph{i}} \in \mathcal{I}_Q^{<r}}  S_{\textbf{\emph{i}}}(F) \  \ \Bigg) \\ \\
& = & N_r\Bigg( \ \ \bigcup_{\substack{\textbf{\emph{j}} \in \mathcal{I}_r :\\ \\
\exists \textbf{\emph{i}}  \in \mathcal{I}_Q^{<r} , \  \textbf{\emph{j}}  \prec \textbf{\emph{i}} }}  S_{\textbf{\emph{i}}}(F) \cap  Q(\textbf{\emph{i}}',R) \  \ \Bigg) \\ \\
& \leq &  \sum_{\substack{\textbf{\emph{j}} \in \mathcal{I}_r :\\ \\
\exists \textbf{\emph{i}}  \in \mathcal{I}_Q^{<r} , \  \textbf{\emph{j}}  \prec \textbf{\emph{i}} }}  N_r \Big( S_{\textbf{\emph{i}}}(F) \cap  Q(\textbf{\emph{i}}',R) \Big) \\ \\
& = &  \sum_{\substack{\textbf{\emph{j}} \in \mathcal{I}_r :\\ \\
\exists \textbf{\emph{i}}  \in \mathcal{I}_Q^{<r} , \  \textbf{\emph{j}}  \prec \textbf{\emph{i}} }}  N_{r/c_{\textbf{\emph{i}}}} \big( \pi_1(F) \big) \\ \\
& \leq &  \sum_{\substack{\textbf{\emph{j}} \in \mathcal{I}_r :\\ \\
\exists \textbf{\emph{i}}  \in \mathcal{I}_Q^{<r} , \  \textbf{\emph{j}}  \prec \textbf{\emph{i}} }}  C_\varepsilon \bigg( \frac{c_\textbf{\emph{i}}}{r}\bigg)^{s_1+\varepsilon} \qquad \qquad \text{by (\ref{simplebox1})} \\ \\
& \leq & C_\varepsilon \, c_{\max}^{-2} \,  \bigg( \frac{R}{r}\bigg)^{s_1+t_1+\varepsilon} \sum_{\substack{\textbf{\emph{j}} \in \mathcal{I}_r :\\ \\
\exists \textbf{\emph{i}}  \in \mathcal{I}_Q^{<r} , \  \textbf{\emph{j}}  \prec \textbf{\emph{i}} }}   (r/R)^{t_1} \qquad \qquad  \text{by Lemma \ref{cubes}(1)}  \\ \\
& \leq & C_\varepsilon \, c_{\max}^{-2} \, \alpha_{\min}^{-1}\,  \bigg( \frac{R}{r}\bigg)^{s_1+t_1+\varepsilon} \sum_{\substack{\textbf{\emph{j}} \in \mathcal{I}_r :\\ \\
\exists \textbf{\emph{i}}  \in \mathcal{I}_Q^{<r} , \  \textbf{\emph{j}}  \prec \textbf{\emph{i}} }}   (d_\textbf{\emph{j}}/R)^{t_1} \qquad \qquad  \text{by (\ref{stop1})} \\ \\
& \leq & C_\varepsilon \, c_{\max}^{-2} \, \alpha_{\min}^{-1} \, d_{\min}^{-t_1} \, \bigg( \frac{R}{r}\bigg)^{s_1+t_1+\varepsilon}
\end{eqnarray*}
by Lemma \ref{selfsim} since $\{\textbf{\emph{j}} \in \mathcal{I}_r : \text{ there exists } \textbf{\emph{i}}  \in \mathcal{I}_Q^{<r} \text{ such that } \  \textbf{\emph{j}}  \prec \textbf{\emph{i}}\}$ is clearly contained in some $Q(\textbf{\emph{i}}',R)$-pseudo stopping.
For the second term, by (\ref{simplebox1}), we have
\begin{eqnarray*}
\sum_{\substack{\textbf{\emph{i}} \in \mathcal{I}_Q: \\ \\
d_\textbf{\emph{i}} \geq r}}  \ \sum_{\substack{\textbf{\emph{j}} \in \mathcal{I}^*:\\ \\
\textbf{\emph{ij}} \in \mathcal{I}_r, \\ \\
\alpha_2(\textbf{\emph{ij}}) = d_\textbf{\emph{ij}}}} N_{r/c_\textbf{\emph{ij}}} \big( \pi_1(F)  \big)  &\leq& \sum_{\substack{\textbf{\emph{i}} \in \mathcal{I}_Q:\\ \\
d_\textbf{\emph{i}} \geq r}}  \ \sum_{\substack{\textbf{\emph{j}} \in \mathcal{I}^*:\\ \\
\textbf{\emph{ij}} \in \mathcal{I}_r, \\ \\
\alpha_2(\textbf{\emph{ij}}) = d_\textbf{\emph{ij}}}} C_\varepsilon \, \Big(\frac{c_\textbf{\emph{i}}c_\textbf{\emph{j}}}{r} \Big)^{s_1+\varepsilon} \\ \\
&\leq& C_\varepsilon \, \bigg(\frac{1}{r} \bigg)^{s_1+\varepsilon} \, \sum_{\substack{\textbf{\emph{i}} \in \mathcal{I}_Q: \\ \\
d_\textbf{\emph{i}} \geq r}}  \ c_\textbf{\emph{i}}^{s_1+\varepsilon} \sum_{\substack{\textbf{\emph{j}} \in \mathcal{I}^*:\\ \\
\textbf{\emph{ij}} \in \mathcal{I}_r, \\ \\
\alpha_2(\textbf{\emph{ij}}) = d_\textbf{\emph{ij}}}} c_\textbf{\emph{j}}^{s_1} \\ \\
&\leq& C_\varepsilon \,  \bigg(\frac{1}{r} \bigg)^{s_1+\varepsilon} \, \sum_{\substack{\textbf{\emph{i}} \in \mathcal{I}_Q:\\ \\
d_\textbf{\emph{i}} \geq r}} (c_{\max}^{-1}\,  R)^{s_1+\varepsilon} \  \sum_{\substack{\textbf{\emph{j}} \in \mathcal{I}^*:\\ \\
\textbf{\emph{ij}} \in \mathcal{I}_r, \\ \\
\alpha_2(\textbf{\emph{ij}}) = d_\textbf{\emph{ij}}}} c_\textbf{\emph{j}}^{s_1} \,  \Big(d_\textbf{\emph{i}} \, d_\textbf{\emph{j}}\, r^{-1} \, \alpha_{\min}^{-1}\Big)^{t_1} \\ \\
&\,&  \qquad \qquad \qquad \qquad \qquad \qquad   \qquad \qquad  \text{by Lemma \ref{cubes}(1)  and (\ref{stop1})}\\ \\
&\leq& C_\varepsilon \, c_{\max}^{-2} \, \alpha_{\min}^{-1} \,  \bigg(\frac{R}{r} \bigg)^{s_1+\varepsilon} \,  \bigg(\frac{1}{r} \bigg)^{t_1} \,  \sum_{\substack{\textbf{\emph{i}} \in \mathcal{I}_Q: \\ \\
d_\textbf{\emph{i}} \geq r}} d_\textbf{\emph{i}}^{t_1}  \  \sum_{\substack{\textbf{\emph{j}} \in \mathcal{I}^*:\\ \\
\textbf{\emph{ij}} \in \mathcal{I}_r, \\ \\
\alpha_2(\textbf{\emph{ij}}) = d_\textbf{\emph{ij}}}} c_\textbf{\emph{j}}^{s_1} \,  d_\textbf{\emph{j}}^{t_1}  \\ \\
&\leq& C_\varepsilon \, c_{\max}^{-2} \,\alpha_{\min}^{-1} \, \bigg(\frac{R}{r} \bigg)^{s_1+t_1+\varepsilon}  \,  \sum_{\substack{\textbf{\emph{i}} \in \mathcal{I}_Q}} (d_\textbf{\emph{i}}/R)^{t_1}  \ \sum_{\substack{\textbf{\emph{j}} \in \mathcal{I}^*:\\ \\
\textbf{\emph{ij}} \in \mathcal{I}_r
}} c_\textbf{\emph{j}}^{s_1} \,  d_\textbf{\emph{j}}^{D_A-s_1}  \\ \\
&\,&  \qquad \qquad \qquad \qquad \qquad \qquad   \qquad \qquad  \text{by Lemma \ref{DADB}}\\ \\
&\leq& C_\varepsilon \, c_{\max}^{-2} \,\alpha_{\min}^{-1} \, \bigg(\frac{R}{r} \bigg)^{s_1+t_1+\varepsilon}  \,  \sum_{\substack{\textbf{\emph{i}} \in \mathcal{I}_Q}} (d_\textbf{\emph{i}}/R)^{t_1} \qquad \qquad  \text{by Lemma \ref{DADBadditive}}\\ \\
&\leq& C_\varepsilon \, c_{\max}^{-2} \,\alpha_{\min}^{-1} \, d_{\min}^{-t_1} \, \bigg(\frac{R}{r} \bigg)^{s_1+t_1+\varepsilon} 
\end{eqnarray*}
by Lemma \ref{selfsim} since $\mathcal{I}_Q$ is a $Q(\textbf{\emph{i}}',R)$-pseudo stopping.  Finally, for the third term, by (\ref{simplebox2}), we have
\begin{eqnarray*}
\sum_{\substack{\textbf{\emph{i}} \in \mathcal{I}_Q: \\ \\
d_\textbf{\emph{i}} \geq r}}  \ \sum_{\substack{\textbf{\emph{j}} \in \mathcal{I}^*:\\ \\
\textbf{\emph{ij}} \in \mathcal{I}_r, \\ \\
\alpha_2(\textbf{\emph{ij}}) = c_\textbf{\emph{ij}}}} N_{r/d_\textbf{\emph{ij}}} \big( \pi_2(F)  \big) &\leq& \sum_{\substack{\textbf{\emph{i}} \in \mathcal{I}_Q }}   \ \sum_{\substack{\textbf{\emph{j}} \in \mathcal{I}^*:\\ \\
\textbf{\emph{ij}} \in \mathcal{I}_r, \\ \\
\alpha_2(\textbf{\emph{ij}}) = c_\textbf{\emph{ij}}}}  C_\varepsilon \, \Big(\frac{d_\textbf{\emph{i}}d_\textbf{\emph{j}}}{r} \Big)^{s_2+\varepsilon} \\ \\
&\leq& C_\varepsilon \, \bigg(\frac{1}{r} \bigg)^{s_2+\varepsilon} \, \sum_{\substack{\textbf{\emph{i}} \in \mathcal{I}_Q }}  \ d_\textbf{\emph{i}}^{s_2+\varepsilon}  \ \sum_{\substack{\textbf{\emph{j}} \in \mathcal{I}^*:\\ \\
\textbf{\emph{ij}} \in \mathcal{I}_r, \\ \\
\alpha_2(\textbf{\emph{ij}}) = c_\textbf{\emph{ij}}}}  d_\textbf{\emph{j}}^{s_2} \\ \\
&\leq& C_\varepsilon \, \bigg(\frac{1}{r} \bigg)^{s_2+\varepsilon} \, \sum_{\substack{\textbf{\emph{i}} \in \mathcal{I}_Q }}  \ d_\textbf{\emph{i}}^{s_2+\varepsilon}  \ \sum_{\substack{\textbf{\emph{j}} \in \mathcal{I}^*:\\ \\
\textbf{\emph{ij}} \in \mathcal{I}_r, \\ \\
\alpha_2(\textbf{\emph{ij}}) = c_\textbf{\emph{ij}}}} d_\textbf{\emph{j}}^{s_2} \,  \Big(c_\textbf{\emph{j}} \, c_\textbf{\emph{i}} \, r^{-1} \,  \alpha_{\min}^{-1}\Big)^{t_2}   \qquad \text{by (\ref{stop1})}\\ \\
&=& C_\varepsilon \,  \alpha_{\min}^{-1} \, \bigg(\frac{1}{r} \bigg)^{s_2+\varepsilon} \,  \bigg(\frac{1}{r} \bigg)^{t_2} \,  \sum_{\substack{\textbf{\emph{i}} \in \mathcal{I}_Q}}  d_\textbf{\emph{i}}^{s_2+\varepsilon}  \, c_\textbf{\emph{i}}^{t_2}  \ \sum_{\substack{\textbf{\emph{j}} \in \mathcal{I}^*:\\ \\
\textbf{\emph{ij}} \in \mathcal{I}_r, \\ \\
\alpha_2(\textbf{\emph{ij}}) = c_\textbf{\emph{ij}}}}  d_\textbf{\emph{j}}^{s_2} \,  c_\textbf{\emph{j}}^{t_2}  \\ \\
&\leq& C_\varepsilon \,  \alpha_{\min}^{-1} \, \bigg(\frac{1}{r} \bigg)^{s_2+t_2+\varepsilon}  \, R^{s_2+\varepsilon} \,  \sum_{\textbf{\emph{i}} \in \mathcal{I}_Q} (d_\textbf{\emph{i}}/R)^{s_2}\,  (c_{\max}^{-1}R)^{t_2}  \  \sum_{\substack{\textbf{\emph{j}} \in \mathcal{I}^*:\\ \\
\textbf{\emph{ij}} \in \mathcal{I}_r}}  d_\textbf{\emph{j}}^{s_2} \,  c_\textbf{\emph{j}}^{D_B-s_2}  \\ \\
&\,&  \qquad \qquad \qquad \qquad  \qquad \qquad \qquad \qquad     \text{by Lemma \ref{cubes}(1) and Lemma \ref{DADB}} \\ \\
&\leq& C_\varepsilon \, c_{\max}^{-2}\,  \alpha_{\min}^{-1}  \, \bigg(\frac{R}{r} \bigg)^{s_2+t_2+\varepsilon}  \,  \sum_{\substack{\textbf{\emph{i}} \in \mathcal{I}_Q}} (d_\textbf{\emph{i}}/R)^{s_2} \qquad \qquad \text{by Lemma \ref{DADBadditive}}\\ \\
&\leq& C_\varepsilon \, c_{\max}^{-2}\,  \alpha_{\min}^{-1}  \, d_{\min}^{-s_2} \, \bigg(\frac{R}{r} \bigg)^{s_2+t_2+\varepsilon} 
\end{eqnarray*}
by Lemma \ref{selfsim}  since $\mathcal{I}_Q$ is a $Q(\textbf{\emph{i}}',R)$-pseudo stopping and $s_2 \geq t_1$.  Since both $s_1+t_1$ and $s_2+t_2$ are less than or equal to $s$, combining the above estimates for the three terms appearing in the natural cover for $Q(\textbf{\emph{i}}',R)$ which were introduced at the beginning of the proof yields
\[
N_r\Big( Q(\textbf{\emph{i}}',R) \cap F \Big)\  \leq \  3 \, C_\varepsilon \, c_{\max}^{-2}\,  \alpha_{\min}^{-1}  \, d_{\min}^{-1} \, \, \bigg(\frac{R}{r} \bigg)^{ s +\varepsilon}
\]
which upon letting $\varepsilon \to 0$ gives the desired upper bound. \hfill \qed
\\ \\
\textbf{Lower bound.}  The proof of the lower bound will employ some of the techniques used in Mackay \cite{mackay}.  In particular, we will construct \emph{weak tangents} with the desired dimension.  Weak tangents were used in Section \ref{selfsimexample} to find a lower bound for the dimension of a self-similar set with overlaps.  Here we require the 2 dimensional version, which also follows from \cite[Proposition 7.1.5]{mackaytyson}, which we state here for the benefit of the reader.

\begin{prop}[Mackay-Tyson] \label{weaktang}
Let $X \subset \mathbb{R}^2$ be compact and let $F$ be a compact subset of $X$.  Let $T_k$ be a sequence of similarity maps defined on $\mathbb{R}^2$ and suppose that $T_k(F) \cap X \to_{d_\mathcal{H}} \hat{F}$.  Then $\dim_\text{\emph{A}} \hat{F}   \leq  \dim_\text{\emph{A}} F$.
\end{prop}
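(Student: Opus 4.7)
The plan is to transfer the Assouad covering bound from $F$ onto each set $T_k(F) \cap X$ by using that similarities preserve the ratio $R/r$, and then pass this bound to the Hausdorff limit $\hat{F}$ via a thickening argument.

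First I would fix $s > \dim_\text{A} F$ and choose constants $C, \rho > 0$ such that $N_r(B(y,R) \cap F) \leq C(R/r)^s$ for every $y \in F$ and every $0 < r < R \leq \rho$. Writing $\lambda_k$ for the similarity ratio of $T_k$, the map $T_k$ carries a ball of radius $R/\lambda_k$ to a ball of radius $R$ and sends any $(r/\lambda_k)$-cover to an $r$-cover, so the same scaling bound transfers verbatim to $T_k(F)$, and hence to $T_k(F) \cap X$, for every $0 < r < R \leq \rho \lambda_k$.

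Next, to estimate $N_{r'}(B(x, R') \cap \hat{F})$ for $x \in \hat{F}$ and $0 < r' < R'$, I would fix $\varepsilon \in (0, r'/2)$ and invoke the Hausdorff convergence $T_k(F) \cap X \to_{d_\mathcal{H}} \hat{F}$ to produce, for all sufficiently large $k$, a point $x_k \in T_k(F) \cap X$ with $d(x_k, x) < \varepsilon$ together with the inclusion $\hat{F} \subseteq (T_k(F) \cap X)_\varepsilon$. Two applications of the triangle inequality then give
\[
B(x, R') \cap \hat{F} \ \subseteq \ \big( B(x_k, R' + 2\varepsilon) \cap T_k(F) \cap X \big)_\varepsilon.
\]
Taking $k$ large enough that $\rho \lambda_k \geq R' + 2\varepsilon$, the transferred scaling bound covers $B(x_k, R' + 2\varepsilon) \cap T_k(F) \cap X$ by at most $C\big((R' + 2\varepsilon)/(r' - 2\varepsilon)\big)^s$ sets of diameter at most $r' - 2\varepsilon$; thickening each such set by $\varepsilon$ yields a cover of $B(x, R') \cap \hat{F}$ by sets of diameter at most $r'$.

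Hence $N_{r'}(B(x, R') \cap \hat{F}) \leq C\big((R' + 2\varepsilon)/(r' - 2\varepsilon)\big)^s$ for every sufficiently small $\varepsilon > 0$, and letting $\varepsilon \searrow 0$ gives $N_{r'}(B(x, R') \cap \hat{F}) \leq C(R'/r')^s$. This shows $\dim_\text{A} \hat{F} \leq s$, and letting $s \searrow \dim_\text{A} F$ completes the proof. The main subtlety is ensuring that the scales of interest eventually lie in the valid range $(0, \rho \lambda_k]$ on which the transferred bound is available; this is automatic in the intended zoom-in applications, where $\lambda_k \to \infty$ so the range exhausts every bounded interval.
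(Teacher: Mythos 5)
Your proof is correct. The paper itself does not prove this proposition (it is cited to Mackay and Tyson), but it does prove the strengthened Proposition~\ref{lowerweaktang} on very weak tangents, and your argument follows the same transfer-and-thicken strategy as the Assouad half of that proof: transfer the covering bound from $F$ to $T_k(F)\cap X$ via the (bi-)Lipschitz structure, then pass to the Hausdorff limit by relocating the centre into $T_k(F)\cap X$ and thickening the cover. Your free $\varepsilon\searrow 0$ is marginally sharper than the paper's fixed $r/2$ thickening, which simply absorbs a $4^\beta$ into the constant, but both are equally serviceable for a dimension statement. The one caveat --- which you rightly flag --- is that the transferred bound on $T_k(F)$ is available only at scales up to $\rho\lambda_k$, so your argument as written needs the similarity ratios not to degenerate to zero: if $\liminf_k\lambda_k>0$ you simply restrict $R'$ below $\rho\cdot\liminf_k\lambda_k$, and if $\lambda_k\to 0$ along a subsequence then $T_k(F)$ has diameter tending to zero, so $\hat F$ is a singleton and the inequality is trivial. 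The paper's Proposition~\ref{lowerweaktang} makes the corresponding hypothesis explicit ($a_k,b_k\geq 1$); adding a sentence along these lines would close the gap and make your proof fully self-contained.
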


The set $\hat{F}$ in the above lemma is called a \emph{weak tangent} to $F$.  We are now ready to prove the lower bound.
\begin{proof}
Let $F$ be a self-affine set in the mixed class.  Without loss of generality we may assume that
\[
\max_{i \in \mathcal{I}} \, \max_{j=1,2} \, \Big(\dim_\text{B} \pi_j(F) \, + \,  \dim_\text{B}  \text{Slice}_{j,i}(F) \Big) = \dim_\text{B} \pi_1(F) +  \text{Slice}_{1,i}(F)
\]
for some $i \in \mathcal{I}$ which we now fix.  Also fix $j \in \mathcal{I}$ with $c_j>d_j$ which we may assume exists as otherwise we are in the horizontal or vertical class, which will be dealt with in the following section.  Let $k \in \mathbb{N}$, let
\[
\textbf{\emph{i}}(k) = (\underbrace{j, j, \dots, j}_{k \text{ times}}, i, i, \dots) \in \mathcal{I}^{\mathbb{N}}
\]
and let $X=[0,c_i^{-1}]\times [0,1]$.  We will consider the sequence of approximate squares $\{Q(\textbf{\emph{i}}(k), d_j^k)\}_k$.  Note that for $k \in \mathbb{N}$, we have $k_2(\textbf{\emph{i}}(k), d_j^k) = k$ and let $ k_1(\textbf{\emph{i}}(k), d_j^k) = k+l(k)$ for $l(k) \in \mathbb{N}$ satisfying
\[
c_j^k c_i^{l(k)+1} < d_j^k \leq c_j^k c_i^{l(k)}.
\]
For each $k \in \mathbb{N}$, let $T_k$ be the unique homothetic similarity on $\mathbb{R}^2$ with similarity ratio $d_j^{-k}$ which maps the approximate square $Q(\textbf{\emph{i}}(k), d_j^k)$ to $[0,d_j^{-k} c_i^k c_j^{l(k)}] \times [0,1]  \subseteq X$, mapping the left vertical side of $Q(\textbf{\emph{i}}(k), d_j^k)$ to $\{0\} \times [0,1]$.
\\ \\
Note that we may take a subsequence of the $T_k$ such that $d_j^{-k} c_i^k c_j^{l(k)} \to 1$.  To see this observe that if $\log (d_j/c_j)/(\log c_i) \in \mathbb{Q}$, then there exists a subsequence where $d_j^{-k} c_i^k c_j^{l(k)} = 1$ for all $k$ and if $\log (d_j/c_j)/(\log c_i) \notin \mathbb{Q}$, then the $d_j^{-k} c_i^k c_j^{l(k)}$ are uniformly distributed on $(1, c_i^{-1})$.  Using this and the fact that $\big(\mathcal{K}(X), d_\mathcal{H}\big)$ is compact, we may extract a subsequence of the $T_k$ for which $T_k(F) \cap X$ converges to a weak tangent $\hat{F} \subseteq X$ and $d_j^{-k} c_i^k c_j^{l(k)} \to 1$.
\begin{lma} \label{weaktangprod}
The weak tangent $\hat{F}$ constructed above contains the set $\pi_1(F) \times \pi_2\big(\text{\emph{Slice}}_{1,i}(F)\big) $.
\end{lma}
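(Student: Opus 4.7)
The plan is to unpack how the horizontally stretching map $\Psi_k := T_k \circ S_{j^k}$ acts on the piece of $F$ lying inside the approximate square $Q(\textbf{\emph{i}}(k), d_j^k)$, and then to extract the product structure from the self-similar decomposition of the thin horizontal strip that remains after pulling back. First, using the disjointness of the rectangles $\{S_\textbf{\emph{j}}([0,1]^2): \textbf{\emph{j}} \in \mathcal{I}^{l(k)}\}$ enjoyed by any Bara\'nski IFS, I would verify
\[
F \cap Q(\textbf{\emph{i}}(k), d_j^k) \ = \ S_{j^k}\bigl(F \cap (H_k \times [0,1])\bigr),
\]
where $H_k := \pi_1(S_{i^{l(k)}}([0,1]^2))$ is the horizontal interval of width $c_i^{l(k)}$ that shrinks to the horizontal coordinate $x_i$ of the fixed point of $S_i$. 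A direct calculation then shows that $\Psi_k$ is affine with linear part $\text{diag}(d_j^{-k} c_j^k, \, 1)$: it stretches horizontally by $a_k := d_j^{-k} c_j^k \to \infty$ (blowing up precisely because $c_j > d_j$, a property guaranteed by our mixed-class choice of $j$) while acting as a vertical isometry, and it sends $H_k \times [0,1]$ bijectively onto $[0, w_k] \times [0,1]$ with $w_k := d_j^{-k} c_j^k c_i^{l(k)} \to 1$ along the chosen subsequence.

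The second step is a column decomposition of the horizontal strip. Let $\mathcal{I}_c := \{m \in \mathcal{I} : \pi_1(S_m([0,1]^2)) = \pi_1(S_i([0,1]^2))\}$ denote the indices in the same column as $S_i$. Since distinct columns have disjoint horizontal extents, a level-$l(k)$ rectangle $S_\textbf{\emph{m}}([0,1]^2)$ has horizontal extent contained in $H_k$ precisely when $\textbf{\emph{m}} \in \mathcal{I}_c^{l(k)}$, yielding
\[
F \cap (H_k \times [0,1]) \ = \ \bigcup_{\textbf{\emph{m}} \in \mathcal{I}_c^{l(k)}} S_\textbf{\emph{m}}(F).
\]
For each such $\textbf{\emph{m}}$, the composition $\Psi_k \circ S_\textbf{\emph{m}}$ has linear part $\text{diag}(w_k, d_\textbf{\emph{m}})$, so $\Psi_k(S_\textbf{\emph{m}}(F))$ is an affine copy of $F$ occupying the rectangle $[0, w_k] \times [v_\textbf{\emph{m}}, v_\textbf{\emph{m}} + d_\textbf{\emph{m}}]$, where $v_\textbf{\emph{m}}$ is the vertical base of $S_\textbf{\emph{m}}([0,1]^2)$. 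Crucially $d_\textbf{\emph{m}} \to 0$ uniformly in $\textbf{\emph{m}} \in \mathcal{I}_c^{l(k)}$, and the values $\{v_\textbf{\emph{m}}\}$ are precisely the left endpoints of the level-$l(k)$ cylinder intervals of the self-similar IFS on the vertical axis whose attractor is $\pi_2(\text{\emph{Slice}}_{1,i}(F))$.

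The third step is to extract the containment itself. Fix $(x, y) \in \pi_1(F) \times \pi_2(\text{\emph{Slice}}_{1,i}(F))$. Since $\text{\emph{Slice}}_{1,i}(F) = \{\Pi(\textbf{\emph{m}}) : \textbf{\emph{m}} \in \mathcal{I}_c^\mathbb{N}\}$, choose $\textbf{\emph{m}}^* \in \mathcal{I}_c^\mathbb{N}$ with $\pi_2(\Pi(\textbf{\emph{m}}^*)) = y$, and set $\textbf{\emph{m}}(k) := \textbf{\emph{m}}^*\rvert_{l(k)}$, so that $v_{\textbf{\emph{m}}(k)} \to y$ and $d_{\textbf{\emph{m}}(k)} \to 0$. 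Since $\pi_1\bigl(\Psi_k(S_{\textbf{\emph{m}}(k)}(F))\bigr) = w_k \cdot \pi_1(F)$, with $w_k \to 1$ and $\pi_1(F)$ compact, I can select $p_k \in \Psi_k(S_{\textbf{\emph{m}}(k)}(F))$ whose horizontal coordinate approximates $x$; its vertical coordinate lies in $[v_{\textbf{\emph{m}}(k)}, v_{\textbf{\emph{m}}(k)} + d_{\textbf{\emph{m}}(k)}]$ and hence tends to $y$. Noting that $\Psi_k(S_{\textbf{\emph{m}}(k)}(F)) = T_k(S_{j^k \textbf{\emph{m}}(k)}(F)) \subseteq T_k(F) \cap X$, we have $p_k \to (x, y)$ along a sequence drawn from the converging sets $T_k(F) \cap X$, which forces $(x, y) \in \hat{F}$.

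The main technical obstacle will be carefully verifying the two OSC-based identities used above, namely $F \cap Q(\textbf{\emph{i}}(k), d_j^k) = S_{j^k}(F \cap (H_k \times [0,1]))$ and $F \cap (H_k \times [0,1]) = \bigcup_{\textbf{\emph{m}} \in \mathcal{I}_c^{l(k)}} S_\textbf{\emph{m}}(F)$. Both rest on the combinatorial fact that in a Bara\'nski carpet the horizontal extent of a level-$n$ cylinder depends only on its column sequence, and that distinct column sequences give disjoint horizontal extents, so an overlapping rectangle must have all its letters in the column of $i$. Once these identifications are pinned down, everything else reduces to routine bookkeeping with affine actions and convergence of the scalars $w_k$, $a_k$, $d_{\textbf{\emph{m}}(k)}$, and $v_{\textbf{\emph{m}}(k)}$.
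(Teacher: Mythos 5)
Your argument is correct, and its skeleton — decomposing the approximate square into level-$l(k)$ column cylinders $S_{j^k\textbf{\emph{m}}}$ with $\textbf{\emph{m}}$ in the column of $i$, then observing that $T_k \circ S_{j^k\textbf{\emph{m}}}$ is affine with linear part $\operatorname{diag}(w_k, d_{\textbf{\emph{m}}})$ where $w_k\to 1$ and $d_{\textbf{\emph{m}}}\to 0$ — is the same one the paper uses. The execution differs in one genuine way: you fix a target point $(x,y)$, follow its vertical coding $\textbf{\emph{m}}^*|_{l(k)}$, and produce an explicit sequence $p_k \in T_k(F)\cap X$ converging to $(x,y)$; the paper instead introduces the level-$l(k)$ approximation $E_k$ of the slice and bounds $d_\mathcal{H}\big(T_k(Q\cap F),\, \pi_1(F)\times\pi_2(E_k)\big)$ to establish full Hausdorff \emph{convergence} of $T_k(Q\cap F)$ to the product. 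Your point-by-point version proves exactly what the lemma states (containment) and is a little cleaner here: it sidesteps the paper's slightly imprecise intermediate assertion that the horizontally rescaled set "becomes a set $\pi_1(F)\times H_k$'' (each flattened copy of $F$ is only Hausdorff-close to a horizontal fibre $\pi_1(F)\times\{v_{\textbf{\emph{m}}}\}$, not literally a product). The trade-off is that the paper's stronger Hausdorff-convergence statement is reused verbatim in Lemma \ref{weaktangprod2}, where \emph{equality} of the very weak tangent with the product is required; your one-sided argument would have to be supplemented there. One small remark on the "main technical obstacle'' you flag: for containment you only ever invoke the $\supseteq$ direction of both set identities ($F\cap Q\supseteq S_{j^k}\big(F\cap(H_k\times[0,1])\big)$ and $F\cap(H_k\times[0,1])\supseteq\bigcup_{\textbf{\emph{m}}\in\mathcal{I}_c^{l(k)}}S_{\textbf{\emph{m}}}(F)$), and these hold trivially from $S_{\textbf{\emph{j}}}(F)\subseteq F$; the disjointness/OSC discussion is needed only for the (unused) reverse inclusions, so you can drop it.
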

\begin{proof}
It suffices to show that $T_k\big(Q(\textbf{\emph{i}}(k), d_j^k) \cap F \big)$ converges to $\pi_1(F) \times \pi_2\big(\text{Slice}_{1,i}(F)\big)$ in the Hausdorff metric.  The IFS $\mathcal{I}$ induces an IFS of similarities on the vertical slice through $\Pi(\textbf{\emph{i}})$ (which is the fixed point of $S_i$).  It is easy to see that the attractor of this IFS is isometric to $\text{Slice}_{1,i}(F)$.  Let $E_{k}$ denote the $l(k)$th level in the construction of $\text{Slice}_{1,i}(F)$ via the induced IFS.  We claim that the set $T_k\big(Q(\textbf{\emph{i}}(k), d_j^k) \cap F \big)$ will never be further away than $1-d_j^{k} c_i^{-k} c_j^{-l(k)} \ + \ d_{\max}^{l(k)}$ from the set $\pi_1(F) \times \pi_2(E_k)$ in the Hausdorff metric.  To see this observe that if we scale $T_k\big(Q(\textbf{\emph{i}}(k), c_j^k) \cap F \big)$ horizontally by $d_j^{k} c_i^{-k} c_j^{-l(k)}$ it becomes a set, $\pi_1(F) \times H_k$ for some set $H_k \subseteq \pi_2(E_k)$ with the property that $H_k$ intersects every basic interval in $\pi_2(E_k)$.  Since each basic interval in $\pi_2(E_k)$ has length no greater than $d_{\max}^{l(k)}$ we have that $\pi_1(F) \times \pi_2(E_k)$ is contained in the $d_{\max}^{l(k)}$ neighbourhood of $\pi_1(F) \times H_k$.  Whence
\begin{eqnarray*}
d_\mathcal{H} \Big(T_k\big(Q(\textbf{\emph{i}}(k), d_j^k) \cap F \big) , \ \pi_1(F) \times \pi_2(E_k) \Big) &\leq& d_\mathcal{H} \Big( T_k\big(Q(\textbf{\emph{i}}(k), d_j^k) \cap F \big)\big) , \ \pi_1(F) \times H_k \Big)  \\ \\ &\,& \qquad  + \ d_\mathcal{H} \Big( \pi_1(F) \times H_k , \ \pi_1(F) \times \pi_2(E_k) \Big) \\ \\
&\leq& 1-d_j^{k} c_i^{-k} c_j^{-l(k)} \ + \ d_{\max}^{l(k)}.
\end{eqnarray*}
It follows from the claim that
\begin{eqnarray*}
d_\mathcal{H} \Big( T_k\big(Q(\textbf{\emph{i}}(k), d_j^k)  \cap F \big) , \ \ \pi_1(F) \times \pi_2\big(\text{Slice}_j(F)\big) \Big) &\leq& d_\mathcal{H} \Big( T_k\big(Q(\textbf{\emph{i}}(k), d_j^k) \cap F \big) ,  \ \ \pi_1(F) \times \pi_2(E_k) \Big)  \\ \\ &\,& \qquad + \ d_\mathcal{H} \Big( \pi_1(F) \times \pi_2(E_k), \ \ \pi_1(F) \times \pi_2\big(\text{Slice}_j(F)\big) \Big) \\ \\
&\leq& \Big(1-d_j^{k} c_i^{-k} c_j^{-l(k)} \ + \ d_{\max}^{l(k)} \Big)  \ + \ d_{\max}^{l(k)}\\ \\
&\to& 0
\end{eqnarray*}
as $k \to \infty$, since
\[
l(k) \ > \  k \frac{\log(d_j/c_j)}{\log c_i} -1 \to \infty \qquad \text{and} \qquad  d_j^{-k} c_i^k c_j^{l(k)} \to 1
\]
which completes the proof of Lemma \ref{weaktangprod}.
\end{proof}
We can now complete the proof of the lower bound by estimating the Assouad dimension of $F$ from below, using the fact that $\hat{F}$ is a product of two self-similar sets.

\begin{eqnarray*}
\dim_\text{B} \pi_1(F) + \dim_\text{B} \text{Slice}_{1,i}(F) &=& \dim_\text{B} \Big(\pi_1(F) \times \pi_2\big(\text{Slice}_{1,i}(F)\big)  \Big)  \\ \\
&\leq& \dim_\text{B} \hat{F} \qquad \text{by Lemma \ref{weaktangprod}} \\ \\
&\leq& \dim_\text{A} \hat{F} \  \leq \  \dim_\text{A} F
\end{eqnarray*}
by Proposition \ref{weaktang}.
\end{proof}

\subsection{Proof of Theorem \ref{uppermain} for the horizontal and vertical classes} \label{upperlalley}

This is similar to the proof in the mixed case and so we only briefly discuss it.
\\ \\
\textbf{Upper bound.} We break up $N_r\big( Q(\textbf{\emph{i}}, R) \cap F  \big)$ in the same way except in this case we may omit either the second or the third term as the smallest singular value always corresponds to either the vertical contraction (in the horizontal class) or the horizontal contraction (in the vertical class).  The rest of the proof proceeds in the same way.
\\ \\
\textbf{Lower bound.}  One can construct a weak tangent with the required dimension.  The key difference to Mackay's argument \cite{mackay} is that, since we may be in the extended Lalley-Gatzouras case, we may not be able to fix a map at the beginning to `follow into the construction'.  Either one can iterate the IFS to find a genuinely affine map which one can `follow in' to find a weak tangent with dimension arbitrarily close to the required dimension, or one can follow our proof in the previous section and choose a genuinely affine map for the first $k$ stages and then switch to a map in the correct column.

\subsection{Proof of Theorem \ref{lowermain} for the mixed class} \label{lowerbaranski}

\textbf{Upper bound.}
Since lower dimension is a natural dual to Assouad dimension and tends to `mirror' the Assouad dimension in many ways, one might expect, given that weak tangents provide a very natural way to find \emph{lower bounds} for Assouad dimension, that weak tangents might provide a way of giving \emph{upper bounds} for lower dimension.  In particular, in light of Proposition \ref{weaktang}, one might na\"ively expect the following statement to be true:
\\ \\
\emph{``Let $X \subset \mathbb{R}^2$ be compact and let $F$ be a compact subset of $X$.  Let $T_k$ be a sequence of similarity maps defined on $\mathbb{R}^2$ and suppose that $T_k(F) \cap X \to_{d_\mathcal{H}} \hat{F}$.  Then $\dim_\text{\emph{L}} \hat{F}   \geq  \dim_\text{\emph{L}} F$.''}
\\ \\
However, it is easy to see that this is false as one can often find weak tangents with isolated points, and hence lower dimension equal to zero, even if the original set has positive lower dimension.  However, we will now state and prove what we believe is the natural analogue of Proposition \ref{weaktang} for the lower dimension.  Note that we also give a slight strengthening of Proposition \ref{weaktang} in that we relax of the conditions on the maps $\{T_k\}$ from \emph{similarity maps} to certain classes of \emph{bi-Lipschitz maps}.  This change is specifically designed to deal with the lower dimension because we can now make the weak tangent precisely equal to the limit of scaled versions of approximate squares.  This is necessary because lower dimension is not monotone and so an analogue of Lemma \ref{weaktangprod} would not suffice.  We include the statement for Assouad dimension for completeness.  The key feature for the lower dimension result is the existence of a constant $\theta \in (0,1]$ with the properties described below.  This is required to prevent the unwanted introduction of isolated points or indeed any points around which the set is inappropriately easy to cover.  We call the `tangents' described in the following proposition \emph{very weak tangents}.
\begin{prop}[very weak tangents] \label{lowerweaktang}
Let $X \subset \mathbb{R}^n$ be compact and let $F$ be a compact subset of $X$.  Let $T_k$ be a sequence of bi-Lipschitz maps defined on $\mathbb{R}^n$ with Lipschitz constants $a_k,b_k \geq 1$ such that
\[
a_k \lvert x-y\rvert\  \leq \ \lvert T_k(x) - T_k(y) \rvert \  \leq \  b_k \lvert x-y \rvert \qquad (x,y \in \mathbb{R}^n)
\]
and
\[
\sup_k \, b_k/a_k \ = \ C_0 \ < \  \infty
\]
and suppose that $T_k(F) \cap X \to_{d_\mathcal{H}} \hat{F}$.  Then
\[
\dim_\text{\emph{A}} \hat{F} \  \leq \   \dim_\text{\emph{A}} F.
\]
If, in addition, there exists a uniform constant $\theta \in (0,1]$ such that for all $r \in (0,1]$ and $\hat{x} \in \hat{F}$, there exists $\hat{y} \in \hat{F}$ such that $B(\hat{y}, r\theta) \subseteq B(\hat{x}, r) \cap X$;  then
\[
\dim_\text{\emph{L}} F \  \leq  \ \dim_\text{\emph{L}} \hat{F} \  \leq \   \dim_\text{\emph{A}} \hat{F} \ \leq \ \dim_\text{\emph{A}} F.
\]
\end{prop}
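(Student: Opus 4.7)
The plan is to transport covering estimates back and forth between $F$ and $\hat F$ through the bi-Lipschitz maps $T_k$, using Hausdorff closeness of $T_k(F) \cap X$ and $\hat F$ to pay only an $\varepsilon$ in the covering radii, and paying a factor $C_0^{\pm s}$ in the constants because the Lipschitz distortion $b_k/a_k$ is uniformly bounded. The standing dictionary is this: if $y_k \in T_k(F)\cap X$ and $x_k := T_k^{-1}(y_k) \in F$, then $T_k$ embeds $F \cap B(x_k, R/a_k)$ onto a superset of $T_k(F) \cap B(y_k, R)$ and distorts diameters by at most the factor $b_k$, while $T_k^{-1}$ distorts diameters by at most the factor $1/a_k$.

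For the Assouad bound, fix $s > \dim_\text{A} F$ with witnessing constants $C, \rho > 0$. Given $\hat x \in \hat F$ and $0 < r < R$, for every small $\varepsilon > 0$ I take $k$ so large that some $y_k \in T_k(F) \cap X$ satisfies $|y_k - \hat x| < \varepsilon$ and $d_\mathcal{H}(T_k(F) \cap X, \hat F) < \varepsilon$. Then $\hat F \cap B(\hat x, R)$ lies in the $\varepsilon$-neighbourhood of $T_k(F) \cap B(y_k, R + 2\varepsilon)$, so an $(r{-}2\varepsilon)$-cover of the latter, $\varepsilon$-thickened, yields an $r$-cover of the former. Pulling that back via $T_k^{-1}$ and applying the Assouad hypothesis at $x_k := T_k^{-1}(y_k)$ gives
\[
N_r\big(\hat F \cap B(\hat x, R)\big) \ \leq \ N_{(r-2\varepsilon)/b_k}\big(F \cap B(x_k, (R+2\varepsilon)/a_k)\big) \ \leq \ C\, C_0^s \left(\frac{R+2\varepsilon}{r - 2\varepsilon}\right)^s.
\]
Letting $\varepsilon \to 0$ and then $s \searrow \dim_\text{A} F$ yields $\dim_\text{A} \hat F \leq \dim_\text{A} F$.

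For the lower-dimension bound, fix $s < \dim_\text{L} F$ with witnessing constants $C, \rho > 0$. Given $\hat x \in \hat F$ and $0 < r < R$, invoke the $\theta$-assumption to produce $\hat y \in \hat F$ with $B(\hat y, R\theta) \subseteq B(\hat x, R) \cap X$, pick $y_k \in T_k(F) \cap X$ within $\varepsilon$ of $\hat y$, and set $x_k := T_k^{-1}(y_k)$. The lifting now runs in reverse: any cover of $\hat F \cap B(\hat x, R)$ by sets of diameter $\leq r$ automatically covers $\hat F \cap B(\hat y, R\theta)$, and since $B(\hat y, R\theta) \subseteq X$, every point of $T_k(F) \cap B(y_k, R\theta - 2\varepsilon)$ sits within $\varepsilon$ of some point of $\hat F \cap B(\hat y, R\theta)$; hence its $\varepsilon$-thickening covers $T_k(F) \cap B(y_k, R\theta - 2\varepsilon)$ by sets of diameter $\leq r + 2\varepsilon$. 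Pulling back through $T_k^{-1}$ and invoking the lower-dimension hypothesis at $x_k$ gives
\[
N_r\big(\hat F \cap B(\hat x, R)\big) \ \geq \ N_{(r+2\varepsilon)/a_k}\big(F \cap B(x_k, (R\theta - 2\varepsilon)/b_k)\big) \ \geq \ C\, C_0^{-s} \left(\frac{R\theta - 2\varepsilon}{r + 2\varepsilon}\right)^s.
\]
Sending $\varepsilon \to 0$ and then $s \nearrow \dim_\text{L} F$ finishes the proof.

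The main obstacle is exactly the last inequality: the naïve dual of the Mackay--Tyson statement is outright false for lower dimension, since a weak tangent can easily manufacture isolated points or thin regions that carry no information about $\dim_\text{L} F$. The role of the uniform $\theta$ is precisely to force $\hat F \cap B(\hat x, R)$ to contain enough interior mass inside $X$ to accommodate a whole $T_k$-image of a ball in $F$, so that the lower-dim bound for $F$ survives the transport; without it, the boundary of $X$ can shave off all the content. A minor secondary nuisance is verifying that the radii $(R\theta - 2\varepsilon)/b_k$ and $(r + 2\varepsilon)/a_k$ are admissible for the lower-dim hypothesis on $F$, which is automatic once $R/r$ exceeds a constant multiple of $C_0/\theta$; in the complementary regime $(R/r)^s$ is bounded and the trivial bound $N_r \geq 1$ can be absorbed into the constant.
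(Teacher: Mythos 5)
Your proof is correct and follows essentially the same strategy as the paper: transfer covers between $F$ and $\hat F$ through the maps $T_k$, using the Hausdorff convergence to pay only an $\varepsilon$ in the covering radii, absorbing the bounded distortion $b_k/a_k$ into the constant, and invoking the uniform $\theta$ to produce a ball inside $B(\hat x, R) \cap X$ against which the lower-dimension hypothesis for $F$ can be tested. The only presentational difference is that the paper first records uniform covering estimates for the intermediate sets $T_k(F)$ and then compares these to $\hat F$ at scale $r/2$, whereas you work directly with $F$ each time and track the $\varepsilon$ explicitly; your closing remark correctly identifies (and disposes of) the only genuine admissibility issue, namely that $R/r$ must exceed roughly $C_0/\theta$ for the transported radii to be compatible, with the complementary regime handled by the trivial bound $N_r \geq 1$.
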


\begin{proof} Let $F \subseteq X$ be a compact set and assume that $\dim_\text{L} F >0$.  If $\dim_\text{L} F =0$, then the lower estimate is trivial.  Let $\hat{F}$ be a very weak tangent to $F$, as described above, and let $\alpha, \beta \in (0,\infty)$ with $\alpha<  \dim_\text{L} F \leq  \dim_\text{A} F < \beta$.  It follows from the fact that the $T_k$ are bi-Lipschitz maps with Lipschitz constants $b_k \geq a_k  \geq 1$ satisfying $\sup_k \, b_k/a_k \ = \ C_0 \ < \  \infty$ that there exists uniform constants $C_1, C_2, \rho>0$ such that for all $k \in \mathbb{N}$, all $0<r < R \leq \rho$ and all $x \in T_k(F)$ we have
\[
C_1 \, C_0^{-\alpha} \, \bigg(\frac{R}{r}\bigg)^\alpha \ \leq \ N_r\big( B(x,R) \cap T_k(F) \big) \ \leq \ C_2 \, C_0^\beta \, \bigg(\frac{R}{r}\bigg)^\beta.
\]
Fix $0<r < R \leq \rho$ and fix $\hat{x} \in \hat{F}$.  Choose $k \in \mathbb{N}$ such that $d_\mathcal{H}(T_k(F) \cap X , \hat{F}) < r/2$.  It follows that there exists $x \in T_k(F) \cap X$ such that $B(\hat{x}, R) \cap \hat{F} \subseteq B(x, 2R)$ and hence, given any $r/2$-cover of $B(x, 2R) \cap T_k(F)$, we may find an $r$-cover of $B(\hat{x}, R) \cap \hat{F}$ by the same number of sets.  Thus
\[
N_r\big( B(\hat{x}, R) \cap \hat{F} \big) \ \leq \ N_{r/2}\big( B(x, 2R) \cap T_k(F) \big) \ \leq \  C_2 \, C_0^\beta \,\bigg(\frac{2R}{r/2}\bigg)^\beta \ = \ C_2 \, C_0^\beta \, 4^\beta \, \bigg(\frac{R}{r}\bigg)^\beta
\]
which proves that $\dim_\text{A} \hat{F} \  \leq \   \dim_\text{A} F$.
\\ \\
For the lower estimate assume that there exists $\theta \in (0,1]$ satisfying the above property and fix $\hat{x} \in \hat{F}$.  We may thus find $\hat{y} \in \hat{F}$ such that $B(\hat{y}, R\theta) \subseteq B(\hat{x}, R) \cap X$.  Choose $k \in \mathbb{N}$ such that $d_\mathcal{H}(T_k(F) \cap X , \hat{F}) < \min\{r/2,R\theta/2\}$.  It follows that there exists $y \in T_k(F) \cap X$ such that $B(y, R\theta/2) \subseteq B(\hat{y}, R\theta)  \subseteq B(\hat{x}, R) \cap X$ and hence, given any $r$-cover of $B(\hat{x}, R) \cap \hat{F}$, we may find an $2r$-cover of $B(y, R\theta/2) \cap T_k(F) \cap X = B(y, R\theta/2) \cap T_k(F)$ by the same number of sets.  Thus 
\[
N_r\big( B(\hat{x}, R) \cap \hat{F} \big) \ \geq \ N_{2r}\big( B(y, R\theta/2) \cap T_k(F) \big) \ \geq \  C_1 \, C_0^{-\alpha} \,\bigg(\frac{R\theta/2}{2r}\bigg)^\alpha \ = \ C_1 \, C_0^{-\alpha} \, (\theta/4)^\alpha \, \bigg(\frac{R}{r}\bigg)^\alpha
\]
which proves that $\dim_\text{L} \hat{F} \  \geq  \ \dim_\text{L} F$.
\end{proof}

We will now turn to the proof of Theorem \ref{lowermain}.  Let $F$ be a self-affine set in the mixed class.  Without loss of generality we may assume that
\[
\min_{i \in \mathcal{I}} \, \min_{j=1,2} \, \Big(\dim_\text{B} \pi_j(F) \, + \,  \dim_\text{B}  \text{Slice}_{j,i}(F) \Big) = \dim_\text{B} \pi_1(F) +  \text{Slice}_{1,i}(F)
\]
for some $i \in \mathcal{I}$ which we now fix.  Also assume that the column in the construction pattern which contains the rectangle corresponding to $i$ contains at least one other rectangle.  Why we assume this will become clear during the proof and we will deal with the other case afterwards.  Now fix $j \in \mathcal{I}$ with $c_j>d_j$ which we may assume exists as otherwise we are in the horizontal or vertical class, which will be dealt with in the following section.  Let $k \in \mathbb{N}$ and let
\[
\textbf{\emph{i}}(k) = (\underbrace{j, j, \dots, j}_{k \text{ times}}, i, i, \dots) \in \mathcal{I}^{\mathbb{N}}
\]
and let $X=[0,1]^2$.  We will consider the sequence of approximate squares $\{Q(\textbf{\emph{i}}(k), d_j^k)\}_k$. For each $k \in \mathbb{N}$, let $T_k$ be the unique linear bi-Lipschitz map on $\mathbb{R}^2$ which maps the approximate square $Q(\textbf{\emph{i}}(k), d_j^k)$ to $X$, mapping the left vertical side of $Q(\textbf{\emph{i}}(k), d_j^k)$ to $\{0\} \times [0,1]$ and the bottom side of $Q(\textbf{\emph{i}}(k), d_j^k)$ to $[0,1] \times \{0\}$.  Note that this sequence of maps $\{T_k\}$ satisfies the requirements of Proposition \ref{lowerweaktang} with $C_0 = \alpha_{\min}^{-1}$, say.  Since $\big(\mathcal{K}(X), d_\mathcal{H}\big)$ is compact, we may extract a subsequence of the $T_k$ for which $T_k(F) \cap X$ converges to a very weak tangent $\hat{F} \subseteq X$.
\begin{lma} \label{weaktangprod2}
The very weak tangent, $\hat{F}$, constructed above is equal to $\pi_1(F) \times \pi_2\big(\text{\emph{Slice}}_{1,i}(F)\big) $ and, furthermore, there exists $\theta \in (0,1]$ with the desired property from Proposition \ref{lowerweaktang}.
\end{lma}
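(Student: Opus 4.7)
The plan is to verify the two assertions separately: the exact equality $\hat F = \pi_1(F) \times \pi_2(\text{Slice}_{1,i}(F))$, and then the existence of the uniform constant $\theta$.

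For the first assertion, I would begin by observing that $T_k$ is an affine bijection of $\mathbb{R}^2$ with $T_k^{-1}(X) = Q(\textbf{\emph{i}}(k), d_j^k)$, so $T_k(F) \cap X = T_k\big( Q(\textbf{\emph{i}}(k), d_j^k) \cap F\big)$. The next step is a combinatorial identification: the interior-disjointness of the level-$k$ rectangles forces the first $k$ symbols of any $\textbf{\emph{p}} \in \mathcal{I}^{k+l(k)}$ with $S_{\textbf{\emph{p}}}([0,1]^2) \subseteq Q(\textbf{\emph{i}}(k), d_j^k)$ to be exactly $j^k$, and the column structure of the Bara\'nski pattern forces the remaining $l(k)$ symbols to lie in the column $\mathcal{C}$ of the defining pattern containing $i$. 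Writing $\textbf{\emph{m}} = (m_1, \dots, m_{l(k)})$, this decomposition reads
\[
Q(\textbf{\emph{i}}(k), d_j^k) \cap F \ = \ \bigcup_{\textbf{\emph{m}} \in \mathcal{C}^{l(k)}} S_{j^k \textbf{\emph{m}}}(F).
\]
Under $T_k$ (which scales horizontally by $(c_j^k c_i^{l(k)})^{-1}$ and vertically by $(d_j^k)^{-1}$), each $T_k(S_{j^k \textbf{\emph{m}}}(F))$ becomes a vertically compressed affine copy of $F$ sitting in a horizontal strip of width $1$ and height at most $d_{\max}^{l(k)} \to 0$, so its Hausdorff distance to the flat set $\pi_1(F) \times \{y^*_{\textbf{\emph{m}}}\}$ (with $y^*_{\textbf{\emph{m}}}$ the base of the strip) is bounded by the strip height uniformly in $\textbf{\emph{m}}$. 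The collection $\{y^*_{\textbf{\emph{m}}} : \textbf{\emph{m}} \in \mathcal{C}^{l(k)}\}$ is precisely the $l(k)$-th iterate applied to $0$ of the induced one-dimensional IFS $\{y \mapsto d_m y + \tilde b_m : m \in \mathcal{C}\}$, whose attractor is $\pi_2(\text{Slice}_{1,i}(F))$; Hausdorff convergence of IFS iterates to the attractor then yields $T_k\big(Q(\textbf{\emph{i}}(k), d_j^k) \cap F\big) \to_{d_\mathcal{H}} \pi_1(F) \times \pi_2(\text{Slice}_{1,i}(F))$.

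For the existence of $\theta$, I would work coordinate-wise. Both $\pi_1(F)$ and $\pi_2(\text{Slice}_{1,i}(F))$ are self-similar subsets of $[0,1]$ satisfying the open set condition, and the mixed-class hypothesis together with the assumption that the column of $i$ contains another map ensures each is non-trivial; Corollary \ref{quasi2}(1) then gives Ahlfors regularity with positive dimension. The core coordinate claim is that for any non-trivial Ahlfors regular self-similar subset $A \subseteq [0,1]$ with dimension $s_A > 0$ there exists $\theta_A > 0$ such that $A \cap B(a, r(1-\theta_A)) \cap [r\theta_A, 1-r\theta_A] \neq \emptyset$ for every $a \in A$ and $r \in (0,1]$; this follows by comparing $\mathcal{H}^{s_A}$-measures, as the ball contributes at least $C_1 r^{s_A}$ while the boundary strips contribute at most $2C_2(r\theta_A)^{s_A}$, and choosing $\theta_A$ small enough the former strictly exceeds the latter. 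Setting $\theta$ to be the minimum of the two coordinate constants and picking $\hat y = (y_1, y_2) \in \hat F$ with each coordinate satisfying the respective constraint, one verifies using the max-metric on $\mathbb{R}^2$ that $B(\hat y, r \theta) \subseteq B(\hat x, r) \cap X$.

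The main obstacle is the clean combinatorial identification of $Q(\textbf{\emph{i}}(k), d_j^k) \cap F$ in the first step, together with keeping careful track of the two different (horizontal and vertical) scalings that make $T_k$ non-conformal. Once this is in place, the Hausdorff convergence in the first part and the uniform interior separation in the second part reduce to routine Ahlfors regularity and self-similarity estimates; an additional subtlety is the boundary overlaps between level-$k$ rectangles, which although measure-zero require a small argument to ensure they do not affect the Hausdorff limit.
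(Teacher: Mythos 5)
Your proposal is correct, and the two halves warrant separate comments. For the equality $\hat F = \pi_1(F) \times \pi_2(\text{Slice}_{1,i}(F))$ your combinatorial decomposition into the pieces $S_{j^k \textbf{\emph{m}}}(F)$ with $\textbf{\emph{m}} \in \mathcal{C}^{l(k)}$, followed by estimating the Hausdorff distance of each flattened piece to $\pi_1(F) \times \{y^*_{\textbf{\emph{m}}}\}$ and invoking convergence of the induced 1D IFS, is essentially the same argument the paper uses (and refers back to) via Lemma \ref{weaktangprod}; the paper in fact omits the details and cites that earlier lemma, so your write-up is a fleshed-out version of the intended argument rather than a departure from it.

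For the existence of $\theta$, however, you take a genuinely different route. The paper proves the one-dimensional claim constructively: assuming WLOG that $0 \in E$ with $c_1$ the ratio of the map fixing $0$, it explicitly exhibits the witness $y = c_1^k z$ with $k$ chosen minimally so that $c_1^k z < r(1-c_{\min})$ and checks by hand that $\theta = c_{\min}(1-c_{\min})$ works, then combines coordinates via the max-metric exactly as you do. Your argument instead relies on Ahlfors regularity: the ball $B(a, r(1-\theta_A))$ carries $\mathcal{H}^{s_A}$-mass comparable to $r^{s_A}$ while the two boundary strips carry at most a constant times $(r\theta_A)^{s_A}$, so for $\theta_A$ small the intersection $A \cap B(a, r(1-\theta_A)) \cap [r\theta_A, 1-r\theta_A]$ is forced to be non-empty. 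This is sound, provided one keeps track of the cases $r(1-\theta_A) \geq \mathrm{diam}(A)$ and $2r\theta_A \geq \mathrm{diam}(A)$ (both are avoided once $\theta_A < \mathrm{diam}(A)/2$, using $r \leq 1$) and notes that the upper Ahlfors bound extends from balls centred in $A$ to arbitrary balls at the cost of a constant. The measure-theoretic route is more conceptual and would apply to any Ahlfors regular subset of $[0,1]$ of positive dimension, not only self-similar ones, whereas the paper's explicit construction is more elementary and entirely self-contained. Both require the same level of care with constants.

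One small imprecision to flag: you assert that the mixed-class hypothesis together with the assumption on the column of $i$ guarantees both $\pi_1(F)$ and $\pi_2(\text{Slice}_{1,i}(F))$ are non-trivial. The latter follows from the column assumption, but the former does not follow from the class hypothesis alone: if every chosen rectangle lies in a single column one can still be in the mixed class (the column widths are all equal but the heights straddle the width), and then $\pi_1(F)$ is a singleton, possibly equal to $0$ or $1$, in which case the tangent lies on $\partial X$ and no $\theta$ exists. This degenerate configuration means $F$ is contained in a vertical line and is really a one-dimensional self-similar set, so the upper bound on $\dim_{\mathrm{L}} F$ can be verified directly there; the paper's proof glosses over this case too (its one-dimensional argument assumes $N \geq 2$ maps in the induced IFS), so it is a gap to be acknowledged rather than one that breaks the method.
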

\begin{proof}
To show that $\hat{F} = \pi_1(F) \times \pi_2\big(\text{Slice}_{1,i}(F)\big) $, it suffices to show that $T_k\big(Q(\textbf{\emph{i}}(k), d_j^k) \cap F \big)$ converges to $\pi_1(F) \times \pi_2\big(\text{Slice}_{1,i}(F)\big)$ in the Hausdorff metric.  This follows by a virtually identical argument to that used in the proof of Lemma \ref{weaktangprod} and is therefore omitted. It remains to show that there exists $\theta \in (0,1]$ such that for all $r \in (0,1]$ and $\hat{x} \in \hat{F}$, there exists $\hat{y} \in \hat{F}$ such that $B(\hat{y}, r\theta) \subseteq B(\hat{x}, r) \cap X$.  We will first prove that the one dimensional analogue of this property holds for self-similar subsets of $[0,1]$.  In particular, let $E \subseteq [0,1]$ be the self-similar attractor of an IFS consisting of $N\geq 2$ homothetic similarities with similarity ratios $\{c_1, \dots, c_N\}$ ordered from left to right by translation vector and write $c_{\min}$ for the smallest contraction ratio.  We will prove that there exists $\theta \in (0,1]$ such that for all $r \in (0,1]$ and $x \in E$, there exists $y \in E$ such that $B(y, r\theta) \subseteq B(x, r) \cap [0,1]$. If $E \subseteq (0,1)$, then we may choose $\theta = \inf_{x \in E, y=0,1} \lvert x-y\rvert>0$ and then for any $x \in E$, we may choose $y=x$.  Thus we assume without loss of generality that $0 \in E$ and so $c_1$ is the contraction ratio of a map which fixes 0.  Also, write $z = \sup_{x \in E} \lvert x \rvert$.  It suffices to prove the result in the case $x=0$ and $r \in (0,z]$.  Observe that $c_1^kz \in E$ for all $k \in \mathbb{N}_0$ and let
\[
k = \min \big\{l \in \mathbb{N}_0 : c_1^l z < r(1-c_{\min})\big\},
\]
$y = c_1^k z \in E$ and $\theta = c_{\min}(1-c_{\min})$. To see that this choice of $y$ and $\theta$ works, observe that
\[
y+\theta r = c_1^k z  + c_{\min}(1-c_{\min})r <  r(1-c_{\min}) + c_{\min}r = r
\]
and
\[
y-\theta r = c_1^k z  - c_{\min}(1-c_{\min})r \geq c_1  r(1-c_{\min}) - c_{\min}(1-c_{\min})r \geq  r(1-c_{\min})(c_1-c_{\min}) \geq 0
\]
and so $B(y, r\theta) \subseteq B(x, r) \cap [0,1] = [0,r)$.  Finally, observe that our set $\hat F$ is the product of two self-similar sets, $E_1$ and $E_2$, of the above form with constants $\theta_1$ and $\theta_2$ giving the desired one dimensional property.  Now let $r \in (0,1]$ and $\hat{x}=(x_1,x_2) \in \hat{F} = E_1 \times E_2$.  By the above argument, there exists $y_1 \in E_1$ and $y_2 \in E_2$ such that $B(y_1, r\theta_1) \subseteq B(x_1, r) \cap [0,1]$ and $B(y_2, r\theta_2) \subseteq B(x_2, r) \cap [0,1]$.  Setting $\hat y = (y_1,y_2) \in \hat F$, it follows that $B\big(\hat y, r \min(\theta_1,\theta_2)\big) \subseteq B(\hat x, r) \cap [0,1]^2$, which completes the proof.
\end{proof}
We can now complete the proof of the upper bound by estimating the lower dimension of $F$ from above using the fact that $\hat{F}$ is a very weak tangent to $F$ and is the product of two self-similar sets.  We have
\begin{eqnarray*}
 \dim_\text{B} \pi_1(F) + \dim_\text{B} \text{Slice}_{1,i}(F) &=& \dim_\text{B} \Big(\pi_1(F) \times \pi_2\big(\text{Slice}_{1,i}(F)\big)  \Big)  \\ \\
&=& \dim_\text{B} \hat{F} \qquad \text{by Lemma \ref{weaktangprod2}} \\ \\
&\geq& \dim_\text{L} \hat{F} \  \geq \  \dim_\text{L} F
\end{eqnarray*}
by Proposition \ref{lowerweaktang}.  Finally, we have to deal with the case where $i$ corresponds to a rectangle which is alone in some column in the construction.  In this case we can construct a very weak tangent to $F$ as above, but we may not be able to find a constant $\theta$ with the desired properties.  In particular, if the rectangle corresponding to $i$ is at the top or bottom of the column, then the very weak tangent will lie on the boundary of $X$.  However, this problem is easy to overcome.  Let $\varepsilon>0$ and note that by iterating the IFS we may produce a new IFS, $\mathcal{I}'$, with the same attractor which has some $i' \in \mathcal{I}'$ for which $ \dim_\text{B} \pi_1(F) +  \text{Slice}_{1,i'}(F)< \dim_\text{B} \pi_1(F) +  \text{Slice}_{1,i}(F) +\varepsilon$ and does not correspond to a rectangle which is in a column by itself.  We can then construct a very weak tangent to $F$ in the above manor with dimension $\varepsilon$-close to the desired dimension which is sufficient to complete the proof of the upper bound.
\\ \\
\textbf{Lower bound.}  The following proof is in the same spirit as the proof of the \emph{upper} bound in Theorem \ref{uppermain}.   Fix $\textbf{\emph{i}}' \in \mathcal{I}^\mathbb{N}$, $R>0$ and $r \in (0, R)$ and as before we will consider the approximate square $Q(\textbf{\emph{i}}',R)$.  Without loss of generality assume that $k_1(\textbf{\emph{i}}',R) \geq k_2(\textbf{\emph{i}}',R)$ and let $k=k_1(\textbf{\emph{i}}',R)$.   Furthermore we may assume that there exists $j_1, j_2 \in \mathcal{I}$ such that $c_{j_1} > d_{j_1}$ and $c_{j_2} < d_{j_2}$ as otherwise we are not in the mixed class. Let
\[
s = \min_{i \in \mathcal{I}} \, \min_{j=1,2} \, \Big(\dim_\text{B} \pi_j(F) \, + \,  \dim_\text{B}  \text{Slice}_{j,i}(F) \Big).
\]
It suffices to prove that for all $\varepsilon \in (0,1)$, there exists a constant $C(\varepsilon)$ such that
\[
N_r\Big( Q(\textbf{\emph{i}}',R) \cap F  \Big) \ \geq \ C(\varepsilon) \, \bigg(\frac{R}{r} \bigg)^{ s -\varepsilon}.
\]
Let $\varepsilon \in (0,1)$.  As before, writing
\[
\mathcal{I}_Q = \mathcal{I}_{Q(\textbf{\emph{i}}',R)} = \big\{ \textbf{\emph{j}} \in \mathcal{I}^k: S_\textbf{\emph{j}}(F) \subseteq Q(\textbf{\emph{i}}',R) \big\},
\]
and
\[
\mathcal{I}_Q^{<r} = \big\{\textbf{\emph{i}} \in \mathcal{I}_Q : d_\textbf{\emph{i}} < r \big\},
\]
we have
\begin{eqnarray*}
N_r\Big( Q(\textbf{\emph{i}}',R)  \cap F  \Big) &=& N_r\Bigg( \ \ \bigcup_{\substack{\textbf{\emph{i}} \in \mathcal{I}_Q^{<r}}}  S_{\textbf{\emph{i}}}(F) \ \ \cup \ \ \bigcup_{\substack{\textbf{\emph{i}} \in \mathcal{I}_Q: \\ \\
d_\textbf{\emph{i}} \geq r}}  S_{\textbf{\emph{i}}}(F)   \  \ \Bigg).
\end{eqnarray*}
At this point in the proof of the upper bound in Theorem \ref{uppermain}, we iterated the IFS $\mathcal{I}$ within each of the sets $\{  S_{\textbf{\emph{i}}}(F) : \textbf{\emph{i}} \in \mathcal{I}_Q \text{ s.t. }
d_\textbf{\emph{i}} \geq r\}$ to decompose $F$ into small `rectangular parts' with smallest side comparable to $r$.  If we proceed in this way here, then the `third term' causes problems.  In particular, we end up with a term containing
\[
 \sum_{\substack{\textbf{\emph{i}} \in \mathcal{I}_Q}} (d_\textbf{\emph{i}}/R)^{s_2-\varepsilon}
\]
which we wish to bound from below, but cannot as $s_2-\varepsilon$ may be too large.  This problem does not occur in the proof of the upper bound in Theorem \ref{uppermain} as the term
\[
 \sum_{\substack{\textbf{\emph{i}} \in \mathcal{I}_Q}} (d_\textbf{\emph{i}}/R)^{s_2+\varepsilon}
\]
\emph{can} be bounded from \emph{above}.  This was a surprising and interesting complication.  To overcome this we need to engineer it so that the third term disappears.  As such we will iterate only using maps $S_\textbf{\emph{j}}$ which have $c_\textbf{\emph{j}} \geq d_\textbf{\emph{j}}$.  Fortunately we are able to do this by introducing a new IFS $\mathcal{I}_{\varepsilon}$ with the properties outlined in the following lemma.
\begin{lma} \label{Hexists}
There exists an IFS $\mathcal{I}_{\varepsilon}$ of affine maps on $[0,1]^2$ with attractor $F_\varepsilon$ which has the following properties:
\begin{itemize}
\item[(1)] $\mathcal{I}_{\varepsilon}$ is of horizontal type, i.e., $c_\textbf{i} \geq d_\textbf{i}$ for all $\textbf{i} \in \mathcal{I}_{\varepsilon}$,
\item[(2)] $\mathcal{I}_{\varepsilon}$ is a subset of some stopping $\mathcal{I}'$ created from the original IFS $\mathcal{I}$,
\item[(3)] $F_{\varepsilon}$ is a subset of $F$ and is such that $\dim_{\text{\emph{B}}} F_{\varepsilon} \geq s_1+u_1-\varepsilon$.
\end{itemize}
\end{lma}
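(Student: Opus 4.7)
The plan is to take $\mathcal{I}_\varepsilon$ to be a carefully chosen subset of $\mathcal{I}^*$ all of whose elements are horizontal-dominant compositions. Since we are in the mixed class there exists some $j_1 \in \mathcal{I}$ with $c_{j_1} > d_{j_1}$, and for any $\textbf{i} \in \mathcal{I}^*$ one may define $M(\textbf{i}) \geq 0$ to be the least non-negative integer such that $c_\textbf{i} \, c_{j_1}^{M(\textbf{i})} \geq d_\textbf{i} \, d_{j_1}^{M(\textbf{i})}$. Setting $\mathcal{I}_\varepsilon = \{\textbf{i} \, j_1^{M(\textbf{i})} : \textbf{i} \in \mathcal{I}^N\}$ for a large $N$ to be determined, property (1) is immediate by construction. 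Property (2) is obtained by observing that $\mathcal{I}_\varepsilon$ together with all the ``deviating'' branches (words that agree with some $\textbf{i} \in \mathcal{I}^N$ for the first $N$ letters but deviate from $j_1$ somewhere among the next $M(\textbf{i})$ letters) forms a stopping $\mathcal{I}'$ of which $\mathcal{I}_\varepsilon$ is a subset. The inclusion $F_\varepsilon \subseteq F$ is then clear since $\mathcal{I}_\varepsilon \subseteq \mathcal{I}'$ and the attractor generated by the stopping $\mathcal{I}'$ is $F$.

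The core of the lemma is the dimension bound $\dim_\text{B} F_\varepsilon \geq s_1 + u_1 - \varepsilon$. Since $\mathcal{I}_\varepsilon$ is of horizontal type, the Lalley-Gatzouras formula applies and gives $\dim_\text{B} F_\varepsilon = D_A(\varepsilon)$, where $D_A(\varepsilon)$ is the unique exponent satisfying
\[
\sum_{\textbf{j} \in \mathcal{I}_\varepsilon} c_\textbf{j}^{s_1(\varepsilon)} \, d_\textbf{j}^{D_A(\varepsilon) - s_1(\varepsilon)} \ = \ 1
\]
with $s_1(\varepsilon) = \dim_\text{B} \pi_1(F_\varepsilon)$. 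Applying Lemma \ref{DADB} to the sub-IFS $\mathcal{I}_\varepsilon$ (with its own induced projection and slice data), one obtains $D_A(\varepsilon) \geq s_1(\varepsilon) + u_1(\varepsilon)$, where $u_1(\varepsilon)$ is the minimum box dimension of a vertical slice of $F_\varepsilon$ through the fixed point of some element of $\mathcal{I}_\varepsilon$. It therefore suffices to show that $s_1(\varepsilon) \to s_1$ and $u_1(\varepsilon) \to u_1$ as $N \to \infty$.

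The main obstacle is to control these two auxiliary quantities. For the projection, $\pi_1(F_\varepsilon)$ is a self-similar set whose generating IFS has contractions $c_\textbf{j} = c_\textbf{i} c_{j_1}^{M(\textbf{i})}$ for $\textbf{i} \in \mathcal{I}^N$; because the tail $j_1^{M(\textbf{i})}$ only further subdivides, rather than coarsens, the level-$N$ cover of $\pi_1(F)$, the set $\pi_1(F_\varepsilon)$ contains images of deep iterates of the $\pi_1(F)$-IFS, yielding $s_1(\varepsilon) \geq s_1 - \varepsilon/2$ for $N$ large. For the slice bound, the awkwardness that fixed points of compositions need not lie on slices of $F$ of known dimension is resolved by further restricting $\mathcal{I}_\varepsilon$ to those compositions $\textbf{i} \, j_1^{M(\textbf{i})}$ where $\textbf{i}$ ends with a map in $\mathcal{C}_{i^*}$, the column achieving the minimum $u_1$. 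The vertical slice of $F_\varepsilon$ through the fixed point of such a composition is generated by a one-dimensional IFS of similarities whose contractions agree with those of $\text{Slice}_{1,j}(F)$ for $j \in \mathcal{C}_{i^*}$ up to the bounded $j_1$-tail correction, so its box dimension is at least $u_1 - \varepsilon/2$, completing the argument.
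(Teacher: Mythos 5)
Your construction differs from the paper's. The paper takes $\mathcal{I}_{0}$ to be the set of words $\textbf{i} \in \mathcal{I}^{*}$ with $c_{\textbf{i}} \geq d_{\textbf{i}}$ and no non-empty proper prefix satisfying $c_{\textbf{j}} \geq d_{\textbf{j}}$ (a ``first horizontal-dominant return'' set, which is itself a stopping), and then sets $\mathcal{I}_{\varepsilon} = \mathcal{I}_{k} = \{\textbf{i} \in \mathcal{I}_{0} : |\textbf{i}| \leq k\}$ for a large $k$. Your construction, by contrast, fixes $N$, runs over all of $\mathcal{I}^{N}$ and forces a $j_{1}$-tail of variable length $M(\textbf{i})$. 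Both constructions give (1) and (2) correctly and you argue these cleanly, but the approaches have genuinely different structure: the paper's $\mathcal{I}_{k}$ monotonically exhausts an actual stopping $\mathcal{I}_{0}$ as $k \to \infty$, which lets one couple the relevant sum (via Lemma \ref{DADBadditive}) to the identity $\sum_{\textbf{i} \in \mathcal{I}_{0}} c_{\textbf{i}}^{s_1} d_{\textbf{i}}^{D_A - s_1} = 1$; your $\mathcal{I}_{\varepsilon}$ does not approximate a stopping in this way, so that route is unavailable.

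The genuine gap is in the verification of (3). Your reduction to showing $s_1(\varepsilon) \to s_1$ and $u_1(\varepsilon) \to u_1$ is a sensible use of Lemma \ref{DADB}, but neither limit is actually proved. The argument given for the projection --- that ``the tail only further subdivides, rather than coarsens'' --- does not establish $\dim_{\text{B}} \pi_1(F_{\varepsilon}) \geq s_1 - \varepsilon/2$: $\pi_1(F_{\varepsilon})$ is a genuine subset of $\pi_1(F)$ and the forced $j_1$-tails, whose lengths $M(\textbf{i})$ can be comparable to $N$, restrict the admissible column sequences of $F_{\varepsilon}$ to ones containing long runs of $j_1$'s column; a priori this can substantially thin $\pi_1(F_{\varepsilon})$ and drop its dimension. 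No OSC check or similarity-dimension estimate for the projected IFS is supplied, and the heuristic does not replace one. The slice estimate $u_1(\varepsilon) \geq u_1 - \varepsilon/2$ is likewise asserted rather than derived, and the mid-proof modification of $\mathcal{I}_{\varepsilon}$ (restricting to words of $\mathcal{I}^{N}$ ending in $\mathcal{C}_{i^{*}}$) changes the IFS after the projection analysis has already been performed, so the two halves of the argument apply to different systems. To make (3) work you would need a concrete quantitative estimate (for instance, a lower bound on $\sum_{\textbf{j} \in \mathcal{I}_{\varepsilon}} c_{\textbf{j}}^{s_1} d_{\textbf{j}}^{u_1}$ together with control of $s_1(\varepsilon)$), and at present neither is present.
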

\begin{proof}
Let 
\[
\mathcal{I}_{0} = \{\textbf{\emph{i}} \in \mathcal{I}^* : c_\textbf{\emph{i}} \geq d_\textbf{\emph{i}} \text{ and } \nexists  \textbf{\emph{j}} \prec \textbf{\emph{i}} \text{ s.t. } \textbf{\emph{j}} \neq  \textbf{\emph{i}} \text{ and } c_\textbf{\emph{j}} \geq d_\textbf{\emph{j}} \}
\]
and let 
\[
\mathcal{I}_{k} = \{\textbf{\emph{i}} \in \mathcal{I}_0 : \lvert \textbf{\emph{i}}\rvert \leq k \}.
\]
It is clear that $\mathcal{I}_{k}$ satisfies properties (1) and (2) and that $k$ can be chosen large enough to ensure that property (3) is satisfied.
\end{proof}

We treat $\mathcal{I}_\varepsilon$ like $\mathcal{I}$ and write $(\mathcal{I}_\varepsilon)^* = \bigcup_{k\geq1} (\mathcal{I}_\varepsilon)^k$ to denote the set of all finite sequences with entries in $\mathcal{I}_\varepsilon$ and
\[
\alpha_{\varepsilon, \min} = \min \{\alpha_2(\textbf{\emph{i}}) : \textbf{\emph{i}} \in \mathcal{I}_\varepsilon \}>0,
\]
which clearly depends on $\varepsilon$.  We have

\begin{eqnarray*}
N_r\Big( Q(\textbf{\emph{i}}',R)  \cap F  \Big) &=& N_r\Bigg( \ \ \bigcup_{\substack{\textbf{\emph{j}} \in \mathcal{I}_r :\\ \\
 \exists \textbf{\emph{i}} \in \mathcal{I}_Q^{<r}, \\ \\
\textbf{\emph{j}} \prec \textbf{\emph{i}}  }}  \Big(S_{\textbf{\emph{j}}}(F) \cap Q(\textbf{\emph{i}}',R)\Big) \ \ \cup \ \ \bigcup_{\substack{\textbf{\emph{i}} \in \mathcal{I}_Q: \\ \\
d_\textbf{\emph{i}} \geq r}}  \ \bigcup_{\substack{\textbf{\emph{j}} \in (\mathcal{I}_{\varepsilon})^*:\\ \\
\alpha_2(\textbf{\emph{ij}})<r \leq \alpha_2(\textbf{\emph{i}}\overline{\textbf{\emph{j}}})} } S_{\textbf{\emph{ij}}}(F)   \  \ \Bigg).
\end{eqnarray*}
Let $U$ be any $r \times r$ closed square with sides parallel to the coordinate axes and let
\[
M_\varepsilon = \min\{ n \in \mathbb{N} : n \geq \alpha_{\varepsilon, \min}^{-1}+2\}.
\]
Observe that each of the sets $S_{\textbf{\emph{j}}}(F) \cap Q(\textbf{\emph{i}}',R)$ and $S_{\textbf{\emph{ij}}}(F)$ inside the above unions lies in a rectangle whose smallest side is of length at least $\alpha_{\varepsilon, \min} r$ and the interiors of these rectangles are pairwise disjoint.  It follows from this that $U$ can intersect no more than $M_\varepsilon^2$ of the sets $S_{\textbf{\emph{j}}}(F) \cap Q(\textbf{\emph{i}}',R)$ and $S_{\textbf{\emph{ij}}}(F)$.  Whence, using the $r$-grid definition of $N_r$,
\begin{eqnarray*}
M_\varepsilon^2 \, N_r\Big( Q(\textbf{\emph{i}}',R)  \cap F  \Big) &\geq& \sum_{\substack{\textbf{\emph{j}} \in \mathcal{I}_r :\\ \\
 \exists \textbf{\emph{i}}\in \mathcal{I}_Q^{<r}, \\ \\
\textbf{\emph{j}} \prec \textbf{\emph{i}}  }} N_r\Big(   S_{\textbf{\emph{j}}}(F) \cap Q(\textbf{\emph{i}}',R)  \Big) \ \ + \ \ \sum_{\substack{\textbf{\emph{i}} \in \mathcal{I}_Q: \\ \\
d_\textbf{\emph{i}} \geq r}}  \ \sum_{\substack{\textbf{\emph{j}} \in (\mathcal{I}_{\varepsilon})^*:\\ \\
\alpha_2(\textbf{\emph{ij}})<r \leq \alpha_2(\textbf{\emph{i}}\overline{\textbf{\emph{j}}})} } N_r\Big( S_{\textbf{\emph{ij}}}(F)   \Big)\\ \\
&\geq& \sum_{\substack{\textbf{\emph{j}} \in \mathcal{I}_r :\\ \\
 \exists \textbf{\emph{i}}\in \mathcal{I}_Q^{<r}, \\ \\
\textbf{\emph{j}} \prec \textbf{\emph{i}}  }} N_{r/c_\textbf{\emph{i}}}(\pi_1(F)) \ + \  \sum_{\substack{\textbf{\emph{i}} \in \mathcal{I}_Q: \\ \\
d_\textbf{\emph{i}} \geq r}}  \ \sum_{\substack{\textbf{\emph{j}} \in (\mathcal{I}_{\varepsilon})^*:\\ \\
\alpha_2(\textbf{\emph{ij}})<r \leq \alpha_2(\textbf{\emph{i}}\overline{\textbf{\emph{j}}})} } N_{r/c_\textbf{\emph{ij}}} \big( \pi_1(F)  \big)
\end{eqnarray*}

As before, we will analyse each of the above terms separately.  For the first term we have
\begin{eqnarray*}
\sum_{\substack{\textbf{\emph{j}} \in \mathcal{I}_r :\\ \\
 \exists \textbf{\emph{i}}\in \mathcal{I}_Q^{<r}, \\ \\
\textbf{\emph{j}} \prec \textbf{\emph{i}}  }} N_{r/c_\textbf{\emph{i}}}(\pi_1(F))
&\geq& \sum_{\substack{\textbf{\emph{j}} \in \mathcal{I}_r :\\ \\
 \exists \textbf{\emph{i}}\in \mathcal{I}_Q^{<r}, \\ \\
\textbf{\emph{j}} \prec \textbf{\emph{i}}  }} \tfrac{1}{C_\varepsilon}  \, \bigg( \frac{R}{r} \bigg)^{s_1-\varepsilon}  \qquad \text{by (\ref{simplebox1}) and Lemma \ref{cubes} (1)} \\ \\
&\geq&  \tfrac{1}{C_\varepsilon}  \,\bigg( \frac{ R}{r} \bigg)^{s_1+u_1-\varepsilon} \sum_{\substack{\textbf{\emph{j}} \in \mathcal{I}_r :\\ \\
 \exists \textbf{\emph{i}}\in \mathcal{I}_Q^{<r}, \\ \\
\textbf{\emph{j}} \prec \textbf{\emph{i}}  }} (r/R)^{u_1}  \\ \\
&\geq&  \tfrac{1}{C_\varepsilon}  \,\bigg( \frac{ R}{r} \bigg)^{s-\varepsilon} \sum_{\substack{\textbf{\emph{j}} \in \mathcal{I}_r :\\ \\
 \exists \textbf{\emph{i}}\in \mathcal{I}_Q^{<r}, \\ \\
\textbf{\emph{j}} \prec \textbf{\emph{i}}  }} (d_\textbf{\emph{j}}/R)^{u_1}  \qquad \text{by (\ref{stop1})}
\end{eqnarray*}

For the second term we have
\begin{eqnarray*}
 \sum_{\substack{\textbf{\emph{i}} \in \mathcal{I}_Q: \\ \\
d_\textbf{\emph{i}} \geq r}}  \ \sum_{\substack{\textbf{\emph{j}} \in (\mathcal{I}_{\varepsilon})^*:\\ \\
\alpha_2(\textbf{\emph{ij}})<r \leq \alpha_2(\textbf{\emph{i}}\overline{\textbf{\emph{j}}})} } N_{r/c_\textbf{\emph{ij}}} \big( \pi_1(F)  \big)
&\geq&  \sum_{\substack{\textbf{\emph{i}} \in \mathcal{I}_Q: \\ \\
d_\textbf{\emph{i}} \geq r}}  \ \sum_{\substack{\textbf{\emph{j}} \in (\mathcal{I}_{\varepsilon})^*:\\ \\
\alpha_2(\textbf{\emph{ij}})<r \leq \alpha_2(\textbf{\emph{i}}\overline{\textbf{\emph{j}}})} } \tfrac{1}{C_\varepsilon} \, \Big(\frac{c_\textbf{\emph{i}}c_\textbf{\emph{j}}}{r} \Big)^{s_1-\varepsilon}
 \qquad \text{by (\ref{simplebox1})} \\ \\
&\geq&  \tfrac{1}{C_\varepsilon} \,\bigg(\frac{1}{r} \bigg)^{s_1-\varepsilon} \sum_{\substack{\textbf{\emph{i}} \in \mathcal{I}_Q:\\ \\
d_\textbf{\emph{i}} \geq r}}  \ c_\textbf{\emph{i}}^{s_1-\varepsilon} \,  \sum_{\substack{\textbf{\emph{j}} \in (\mathcal{I}_{\varepsilon})^*:\\ \\
\alpha_2(\textbf{\emph{ij}})<r \leq \alpha_2(\textbf{\emph{i}}\overline{\textbf{\emph{j}}})} }  c_\textbf{\emph{j}}^{s_1} \\ \\
&\geq& \tfrac{1}{C_\varepsilon} \,\bigg(\frac{R}{r} \bigg)^{s_1-\varepsilon} \sum_{\substack{\textbf{\emph{i}} \in \mathcal{I}_Q: \\ \\
d_\textbf{\emph{i}} \geq r}}  \  \sum_{\substack{\textbf{\emph{j}} \in (\mathcal{I}_{\varepsilon})^*:\\ \\
\alpha_2(\textbf{\emph{ij}})<r \leq \alpha_2(\textbf{\emph{i}}\overline{\textbf{\emph{j}}})} } c_\textbf{\emph{j}}^{s_1}  \big( d_\textbf{\emph{i}}d_\textbf{\emph{j}} r^{-1} \big)^{u_1} \\ \\
&\quad& \qquad  \qquad \text{by Lemma \ref{cubes} (1) and since $r>\alpha_2(\textbf{\emph{ij}}) = d_\textbf{\emph{i}}d_\textbf{\emph{j}}$}\\ \\
&\geq&  \tfrac{1}{C_\varepsilon} \,\bigg(\frac{R}{r} \bigg)^{s_1-\varepsilon} \,\bigg(\frac{1}{r} \bigg)^{u_1} \sum_{\substack{\textbf{\emph{i}} \in \mathcal{I}_Q: \\ \\
d_\textbf{\emph{i}} \geq r}}  \ d_\textbf{\emph{i}}^{u_1}  \sum_{\substack{\textbf{\emph{j}} \in (\mathcal{I}_{\varepsilon})^*:\\ \\
\alpha_2(\textbf{\emph{ij}})<r \leq \alpha_2(\textbf{\emph{i}}\overline{\textbf{\emph{j}}})} }  c_\textbf{\emph{j}}^{s_1}    d_\textbf{\emph{j}}^{u_1}\\ \\
&\geq& \tfrac{1}{C_\varepsilon} \,\bigg(\frac{R}{r} \bigg)^{s_1+u_1-\varepsilon} \, \sum_{\substack{\textbf{\emph{i}} \in \mathcal{I}_Q: \\ \\
d_\textbf{\emph{i}} \geq r}}  \ (d_\textbf{\emph{i}}/R)^{u_1}  \sum_{\substack{\textbf{\emph{j}} \in (\mathcal{I}_{\varepsilon})^*:\\ \\
\alpha_2(\textbf{\emph{ij}})<r \leq \alpha_2(\textbf{\emph{i}}\overline{\textbf{\emph{j}}})} }  c_\textbf{\emph{j}}^{s_1}    d_\textbf{\emph{j}}^{\dim_\text{B}F_\varepsilon+\varepsilon -s_1}\\ \\
&\quad& \qquad  \qquad \text{by Lemma \ref{DADB}}\\ \\
&\geq& \tfrac{1}{C_\varepsilon} \,\bigg(\frac{R}{r} \bigg)^{s_1+u_1-\varepsilon} \, \sum_{\substack{\textbf{\emph{i}} \in \mathcal{I}_Q: \\ \\
d_\textbf{\emph{i}} \geq r}}  \ (d_\textbf{\emph{i}}/R)^{u_1} \ d_\textbf{\emph{j}}^\varepsilon \sum_{\substack{\textbf{\emph{j}} \in (\mathcal{I}_{\varepsilon})^*:\\ \\
\alpha_2(\textbf{\emph{ij}})<r \leq \alpha_2(\textbf{\emph{i}}\overline{\textbf{\emph{j}}})} }  c_\textbf{\emph{j}}^{s_1}    d_\textbf{\emph{j}}^{\dim_\text{B}F_\varepsilon-s_1}\\ \\
&\geq&  \tfrac{1}{C_\varepsilon} \, \alpha_{\varepsilon, \min} \, \bigg(\frac{R}{r} \bigg)^{s-2\varepsilon} \, \sum_{\substack{\textbf{\emph{i}} \in \mathcal{I}_Q: \\ \\
d_\textbf{\emph{i}} \geq r}}  \ (d_\textbf{\emph{i}}/R)^{u_1}
\end{eqnarray*}
by Lemma \ref{DADBadditive} and the fact that $d_\textbf{\emph{j}} \geq (r/d_\textbf{\emph{i}}) \,\alpha_{\varepsilon, \min} \geq  (r/R) \,\alpha_{\varepsilon, \min}$.  Combining the estimates for the two terms introduced above yields
\begin{eqnarray*}
M_\varepsilon^2 \, N_r\Big( Q(\textbf{\emph{i}}',R)  \cap F  \Big) &\geq&   \tfrac{1}{C_\varepsilon}  \,\bigg( \frac{ R}{r} \bigg)^{s-\varepsilon} \sum_{\substack{\textbf{\emph{j}} \in \mathcal{I}_r :\\ \\
 \exists \textbf{\emph{i}}\in \mathcal{I}_Q^{<r}, \\ \\
\textbf{\emph{j}} \prec \textbf{\emph{i}}  }} (d_\textbf{\emph{j}}/R)^{u_1} \ + \  \tfrac{1}{C_\varepsilon} \,  \, \alpha_{\varepsilon, \min} \bigg(\frac{R}{r} \bigg)^{s-2\varepsilon} \, \sum_{\substack{\textbf{\emph{i}} \in \mathcal{I}_Q: \\ \\
d_\textbf{\emph{i}} \geq r}}  \ (d_\textbf{\emph{i}}/R)^{u_1} \\ \\
&\geq&  \tfrac{1}{C_\varepsilon} \, \alpha_{\varepsilon, \min} \,\bigg( \frac{ R}{r} \bigg)^{s-2\varepsilon} \sum_{\textbf{\emph{j}} \in \mathcal{I}_0} (d_\textbf{\emph{j}}/R)^{u_1} 
\end{eqnarray*}
where
\[
\mathcal{I}_0 \ := \  \{\textbf{\emph{j}} \in \mathcal{I}_r : \exists \textbf{\emph{i}}\in \mathcal{I}_Q^{<r} \  \text{s.t.} \ 
\textbf{\emph{j}} \prec \textbf{\emph{i}}\}  \ \cup \  \{\textbf{\emph{i}} \in \mathcal{I}_Q :
d_\textbf{\emph{i}} \geq r \}.
\]
Observe that $\mathcal{I}_0$ is a $Q(\textbf{\emph{i}}',R)$-pseudo stopping and so by Lemma  \ref{selfsim} we have
\[
\sum_{\textbf{\emph{j}} \in \mathcal{I}_0} (d_\textbf{\emph{j}}/R)^{u_1}  \geq 1
\]
which yields
\[
N_r\Big( Q(\textbf{\emph{i}}',R)  \cap F  \Big) \ \geq \  \tfrac{1}{M_\varepsilon^2} \, \tfrac{1}{C_\varepsilon} \, \alpha_{\varepsilon, \min} \,\bigg( \frac{ R}{r} \bigg)^{s-\varepsilon}.
\]
It follows that $\dim_\text{L} F \geq s-2\varepsilon$ and letting $\varepsilon \to 0$ completes the proof.  \hfill \qed

\subsection{Proof of Theorem \ref{lowermain} for the horizontal and vertical classes} \label{lowerlalley}

This is similar to the proof in the mixed case and so we only briefly discuss it.
\\ \\
\textbf{Upper bound.}  As in the mixed class, one can construct a \emph{lower weak tangent} with the required dimension.  The proof is slightly simpler in that for the horizontal class, for example, we necessarily have that $s = \dim_\text{B} \pi_1(F) +  \text{Slice}_{1,i}(F)$ for some $i \in \mathcal{I}$ and that there exists $j \in \mathcal{I}$ with $c_j>d_j$.
\\ \\
\textbf{Lower bound.} The proof is greatly simplified in this case because we do not have the added complication eluded to above.  In particular, we do not have to introduce the `horizontal subsystem' $\mathcal{I}_\varepsilon$ and we can just iterate using $\mathcal{I}$ as before with no third term appearing.

\subsection{Proof of Corollary \ref{intcor1}} \label{proofintcor1}

In this section we will rely on some results from \cite{lalley-gatz}, which technically speaking were not proved in the extended Lalley-Gatzouras case.  However, it is easy to see that their arguments can be extended to cover this situation and give the results we require.  Without loss of generality, let $F$ be a self-affine attractor of an IFS $\mathcal{I}$ in the horizontal class and assume that $\dim_\text{L} F < \dim_\text{A} F$.  It follows from Theorems \ref{uppermain} and \ref{lowermain} that 
\[
\min_{i \in \mathcal{I}} \ \dim_\text{B} \text{Slice}_{1,i}(F) < \max_{i \in \mathcal{I}} \ \dim_\text{B} \text{Slice}_{1,i}(F)
\]
which means that we do not have \emph{uniform vertical fibres} and it follows from \cite{lalley-gatz} that $\dim_\text{H} F < \dim_\text{B} F$.  We will now show that $\dim_\text{L} F < \dim_\text{H} F$.  We will use the formula for the Hausdorff dimension given in \cite{lalley-gatz} and so we must briefly introduce some notation.  Suppose we have $m$ non-empty columns in the construction and we have chosen $n_i$ rectangles from the $i$th column.  For the $j$th rectangle in the $i$th column write $c_i$ for the length of the base and $d_{ij}$ for the height.  Notice that the length of the base depends only on which column we are in.  Then the Hausdorff dimension of $F$ is given by
\[
\dim_\H F = \max \Bigg\{\frac{\sum_i \sum_j p_{ij}\log p_{ij}}{\sum_i \sum_j p_{ij}\log d_{ij}} + \sum_i q_i \log q_i \Bigg(\frac{1}{\sum_i q_i \log c_i}-\frac{1}{\sum_i \sum_j p_{ij}\log d_{ij}} \Bigg) \Bigg\}
\]
where the maximum is taken over all associated probability distributions $\{p_{ij}\}$ on the set $\big\{(i,j) : i \in \{1, \dots, m\}, j \in \{1, \dots, n_i\}\big\}$ and $q_i = \sum_j p_{ij}$.  Notice that this formula may be rewritten as
\[
\max \Bigg\{\frac{\sum_i \sum_j p_{ij}\log \big(q_i/p_{ij}\big)}{\sum_i \sum_j p_{ij}\log (d_{ij})^{-1}} + \frac{\sum_i q_i \log q_i }{\sum_i q_i \log c_i} \Bigg\}
\]
which clearly demonstrates that if we continuously decrease a particular $d_{ij}$, then the Hausdorff dimension continuously decreases.  Note that we may continuously decrease any particular $d_{ij}$ without affecting any other rectangle in the construction.
\\ \\
If $\min_{i \in \mathcal{I}} \ \dim_\text{B} \text{Slice}_{1,i}(F) = 0$, then the result is clear.  However, if 
$\min_{i \in \mathcal{I}} \ \dim_\text{B} \text{Slice}_{1,i}(F) > 0$, then, although we have already noted that $F$ does not have uniform horizontal fibres, we may continuously decrease the $d_{ij}$ to obtain a new IFS $\mathcal{I}_1$, with the same number of rectangles and the same base lengths, which has an attractor $F_1$ where $\dim_{\text{B}}\text{Slice}_{1,j}(F_1) = \min_{i \in \mathcal{I}} \ \dim_\text{B} \text{Slice}_{1,i}(F)$ for each $j \in \mathcal{I}_1$.  It follows from the above argument and Theorems \ref{uppermain} and \ref{lowermain} that
\[
\dim_\text{L} F = \dim_\text{H} F_1 <  \dim_\text{H} F.
\]
It remains to show that $ \dim_\text{B} F < \dim_\text{A} F$.  However, this follows from a dual argument observing that the box dimension of $F$ is given by the unique solution $s$ of
\[
\sum_{i=1}^m \sum_{j=1}^{n_i} c_i^{s_1} d_{ij}^{s-s_1} = 1
\]
(see \cite{lalley-gatz} for the basic case or \cite{fengaffine, me_box} for the extended case) and so we may continuously \emph{increase} the $d_{ij}$ independently (while keeping $d_{ij} \leq c_i$) and, if necessary, add new maps in to certain columns, to form a new construction $F_2$ with uniform vertical fibres and such that
\[
\dim_\text{B} F <\dim_\text{B} F_2 = \dim_\text{A} F.
\]

\subsection{Proof of Corollary \ref{intcor2}}  \label{proofintcor2}

Let $F$ be in the mixed class.  The result for the horizontal and vertical classes follows from Corollary \ref{intcor1}.  Suppose $F$ is such that $\dim_\text{L} F = \dim_\text{B} F$.  It follows from Lemma \ref{DADB} that $D_A = D_B =\dim_\text{L} F \leq s_j+\dim_\text{B} \text{Slice}_{j,i} (F)$ for all $j \in \{1,2\}$ and $i \in \mathcal{I}$.  Whence, using the notation from the proof of Lemma \ref{DADB},
\[
1 \ =  \ \sum_{i \in \mathcal{I}} c_i^{s_1} d_i^{D_A-s_1} \  \geq \  \sum_{i=1}^{m} \hat{c}_i^{s_1} \sum_{j \in \mathcal{C}_i} d_j^{\dim_\text{B} \text{Slice}_{1,j} (F)}  \ = \ 1
\]
and
\[
1  \ =  \ \sum_{i \in \mathcal{I}} d_i^{s_2} c_i^{D_B-s_2}  \ \geq \  \sum_{i=1}^{n} \hat{d}_i^{s_2} \sum_{j \in \mathcal{R}_i} c_j^{\dim_\text{B} \text{Slice}_{2,j} (F)}  \ = \ 1
\]
and so we have equality throughout in the above two lines which implies that
\[
D_A = D_B = \dim_\text{L} F  =     \max_{i \in \mathcal{I}} \, \max_{j=1,2} \, \Big(\dim_\text{B} \pi_j(F) \, + \,  \dim_\text{B}  \text{Slice}_{j,i}(F) \Big) = \dim_\text{A} F,
\]
which completes the proof.  We remark here that the key reason that a symmetric argument cannot be used to show that if $\dim_\text{A} F = \dim_\text{B} F$, then  $\dim_\text{A} F  = \dim_\text{L} F$, is that $\dim_\text{A} F = \dim_\text{B} F$ only implies that $\max\{D_A, D_B\} \geq s_j+\dim_\text{B} \text{Slice}_{j,i} (F)$ for all $j \in \{1,2\}$ and $i \in \mathcal{I}$.  Indeed such an implication is not true, as shown by the example in Section \ref{examplesB}.

\vspace{6mm}

\begin{centering}

\textbf{Acknowledgements}

\end{centering}

The author was supported by an EPSRC Doctoral Training Grant.  He would also like to thank Kenneth Falconer, James Hyde, Thomas Jordan, Tuomas Orponen and Tuomas Sahlsten for some helpful discussions of this work.

\end{document}